\DeclareMathSymbol{\mlq}{\mathord}{operators}{``}
\DeclareMathSymbol{\mrq}{\mathord}{operators}{`'}
\title{A note on formal periods}
\date{July 24, 2021}
\author{Fritz H\"ormann\\ Mathematisches Institut, Albert-Ludwigs-Universit\"at Freiburg}
\newtheorem{SATZ}{Theorem}[section]
\newtheorem{LEMMA}[SATZ]{Lemma}
\newtheorem{KEYLEMMA}[SATZ]{Key Lemma}
\newtheorem{DEF}[SATZ]{Definition}
\newtheorem{PROP}[SATZ]{Proposition}
\newtheorem{BEISPIEL}[SATZ]{Example}
\newtheorem{FRAGE}[SATZ]{Question}
\newtheorem{KOR}[SATZ]{Corollary}
\newtheorem{BEM}[SATZ]{Remark}
\newtheoremstyle{bare}        
  {}            
  {}            
  {\normalfont}                 
  {}                            
  {\bfseries}                   
  {}                            
  {.0em}                           
  {\thmnumber{#2}#1. \thmnote{\normalfont\textsc{(#3)}} } 
\theoremstyle{bare}
\newtheorem{PAR}[SATZ]{}
\newcommand{\comment}[1]{}
\newcommand{\C}{ \mathbb{C} }
\newcommand{\Q}{ \mathbb{Q} }
\newcommand{\Z}{ \mathbb{Z} }
\newcommand{\Gm}{ {\mathbb{G}_m} }
\newcommand{\OOO}{\text{\footnotesize$\mathcal{O}$}}
\DeclareMathOperator{\colim}{colim}
\DeclareMathOperator{\id}{id}
\DeclareMathOperator{\op}{op}
\DeclareMathOperator{\coker}{coker}
\DeclareMathOperator{\Hom}{Hom}
\DeclareMathOperator{\End}{End}
\DeclareMathOperator{\im}{im}
\DeclareMathOperator{\eval}{eval}
\DeclareMathOperator{\tr}{tr}
\newcommand{\cat}[1]{ {[\textnormal{ \textbf{#1} }]} }
\newcommand{\Qbar}{{\overline{\mathbb{Q}}}}
\begin{document}

\maketitle

{\footnotesize  {\em 2020 Mathematics Subject Classification:} 11J81, 14C15   }

{\footnotesize  {\em Keywords: Mixed motives, Formal periods, Grothendieck's period conjecture, Kontsevich-Zagier periods}  }

\section*{Abstract}

We give an elementary description of the space of  formal periods of a mixed motive. This allows for a simplified reformulation of the period conjectures of Grothendieck and Kontsevich-Zagier. Furthermore, we develop a machinery which in principle allows to determine the space of formal periods for an arbitrary mixed motive explicitly. 

\section{Introduction}

Let $\mathcal{A}$ be a $\Q$-linear Abelian category with two (covariant) fiber functors\footnote{i.e.\@ $\Q$-linear, exact and faithful}
\[ H_{B} : \mathcal{A} \rightarrow \cat{f.d.-$\Q$-Vect} \qquad H_{dR} : \mathcal{A} \rightarrow \cat{f.d.-$\Qbar$-Vect}  \]
to the category of finite dimensional $\Q$ (resp.\@ $\Qbar$) vector spaces. 
Although the following discussion is completely abstract, we imagine that $\mathcal{A}$ is a (sub-)category of mixed motives, and that the fiber functors correspond to Betti homology, and (the dual of) de Rham cohomology, respectively.

The $\Qbar$-vector space of {\bf formal periods} is the quotient of
\begin{equation}\label{eqBdR}
 \bigoplus_{M \in \mathcal{A}} H_B(M) \otimes_\Q H_{dR}(M)^\vee   
\end{equation}
modulo the relations
\[ (\sigma \otimes \alpha^\vee(\omega))_M = (\alpha(\sigma) \otimes \omega)_N \]
for any morphism $\alpha: M \rightarrow N$ in $\mathcal{A}$, $\sigma \in H_{B}(M)$, and $\omega \in H_{dR}(N)^\vee$.

A {\em concrete} theory of periods is given by an isomorphism
\[  \int: H_{B} \otimes_\Q \C \cong H_{dR} \otimes_\Qbar \C  \]
of fiber functors. Of course, any reasonable category of mixed motives must come equipped with such an isomorphism induced by the integration of forms. 
It induces a $\Qbar$-linear morphism
\begin{eqnarray*} 
\eval_\int: P(\mathcal{A}) &\rightarrow& \C \\
(\sigma \otimes  \omega)_M &\mapsto& \int_\sigma \omega  := \omega(\int_M(\sigma)) \quad \text{(suggestive notation)} .
\end{eqnarray*} 

\newpage

Consider the following properties:
\begin{itemize}
\item[(A)] $\eval_\int$ is  injective.
\item[(B)] For all $M \in \mathcal{A}$, $\sigma \in H_B(M)$, and $\omega \in H_{dR}(M)^\vee$ such that 
\[ \int_\sigma \omega = 0 \]
there is an exact sequence
\[ \xymatrix{ 0 \ar[r] & N' \ar[r] & M \ar[r] & N \ar[r] & 0 } \]
with $\sigma \in H_B(N')$ and $\omega \in H_{dR}(N)^\vee$.
\end{itemize}
If (A) holds, we say that the {\bf period conjecture} holds for $(\mathcal{A}, H_{B}, H_{dR}, \int)$.
For example, Grothendieck's period conjecture (+ connectedness of the torsor, see below) and the period conjecture of Kontsevich-Zagier \cite{KZ01} are equivalent to the period conjecture for
Nori motives $(\cat{Nori-Mot${}_\Qbar$}, H_{B}, H_{dR}, \int)$. See Huber and M\"uller-Stach \cite[Proposition 13.2.6]{HMS17} and Ayoub \cite[Corollary 32]{Ayo14}. 
We refer to \cite{And04, HMS17, Hub20, Ayo14, BC16}  for a general discussion of these conjectures and their relation to existing theories of mixed motives.
Not much is known in general, except in small dimensions, where we have the following theorem of Huber and W\"ustholz based on previous transcendence results by Baker, Gelfond, Schneider, and W\"ustholz:
\begin{SATZ}[\cite{HW18}] \label{SATZHW}
The period conjecture holds for $(\cat{1-Mot${}_\Qbar$}, H_{B}, H_{dR}, \int)$ where $\cat{1-Mot${}_\Qbar$}$ is the $\Q$-linear category of Deligne 1-motives \cite{Del74} defined over $\Qbar$ equipped with its natural fiber functors. 
\end{SATZ}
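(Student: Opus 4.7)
My plan is to combine the explicit structure theory of Deligne 1-motives with W\"ustholz's analytic subgroup theorem. Since $\cat{1-Mot${}_\Qbar$}$ is closed under the relevant constructions, it suffices to establish property (B) for every object $M$: given $(\sigma,\omega)$ with $\int_\sigma \omega = 0$, I seek a short exact sequence $0 \to N' \to M \to N \to 0$ with $\sigma$ in the image of $H_B(N') \to H_B(M)$ and $\omega$ in the image of $H_{dR}(N)^\vee \to H_{dR}(M)^\vee$. The formal-period relations then collapse $(\sigma \otimes \omega)_M$ to $(\sigma' \otimes 0)_{N'} = 0$, establishing (A).

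Next I would unravel the realizations. Writing $M = [u: L \to G]$ with $L$ a lattice and $G$ a semi-abelian variety over $\Qbar$, let $G^\natural$ denote the universal vector extension of $M$. Deligne's construction gives $H_{dR}(M) = \mathrm{Lie}(G^\natural)$ as a $\Qbar$-vector space, while $H_B(M)$ fits in $0 \to H_1(G^{\mathrm{an}}, \Q) \to H_B(M) \to L_\Q \to 0$. Under the comparison isomorphism $H_B(M) \otimes_\Q \C \cong \mathrm{Lie}(G^\natural)(\C)$, each $\sigma \in H_B(M)$ corresponds to a vector $\log(\sigma) \in \mathrm{Lie}(G^\natural)(\C)$ whose image under $\exp_{G^\natural}$ lies in $G^\natural(\Qbar)$, and the period pairing becomes $\int_\sigma \omega = \omega(\log \sigma)$.

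The transcendence input is W\"ustholz's analytic subgroup theorem: for any commutative algebraic group $H$ over $\Qbar$ and any $v \in \mathrm{Lie}(H)(\C)$ with $\exp_H(v) \in H(\Qbar)$, there is a smallest connected algebraic $\Qbar$-subgroup $H_0 \subseteq H$ with $v \in \mathrm{Lie}(H_0)(\C)$. Applying this with $H = G^\natural$ and $v = \log \sigma$, the hypothesis $\omega(v) = 0$ together with the $\Qbar$-rationality of $\omega$ forces $\mathrm{Lie}(H_0)(\C) \subseteq \ker(\omega)$. Under the dictionary between sub-1-motives of $M$ and connected algebraic $\Qbar$-subgroups of $G^\natural$, the group $H_0$ descends to a sub-1-motive $N' \subseteq M$ with $\sigma \in \mathrm{im}(H_B(N') \to H_B(M))$, and the inclusion $\mathrm{Lie}(N'^\natural) \subseteq \ker(\omega)$ means $\omega$ is pulled back from $H_{dR}(M/N')^\vee$, producing the exact sequence required by (B).

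The hard part is the dictionary alluded to above: one has to verify that the connected $\Qbar$-subgroups of $G^\natural$ produced by the analytic subgroup theorem correspond exactly to sub-1-motives of $M$ (rather than, say, arising from an auxiliary vector subgroup of $G^\natural$), and that this correspondence is compatible with the comparison isomorphism, so that $\sigma \in H_B(N')$ and $\omega \in H_{dR}(M/N')^\vee$ follow from $\log \sigma \in \mathrm{Lie}(H_0)$ and $\mathrm{Lie}(H_0) \subseteq \ker \omega$ respectively. A separate small issue is the $L_\Q$-part of $H_B(M)$, which contributes no periods beyond those handled by $G^\natural$ but must be bookkept. Once this functorial translation is in place, the analytic subgroup theorem can be used as a black box.
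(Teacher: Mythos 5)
The paper itself does not prove Theorem~\ref{SATZHW}: it is imported from \cite{HW18}, and the paper only records (via Theorem~\ref{SATZPCALT} and Corollary~\ref{KORAB}) that the route taken there is exactly the one you propose, namely establishing property (B) (their Theorem 3.6, which is even more precise about $N'$ and $N$) and then using the easy implication (B) $\Rightarrow$ (A). Your reduction of a general relation $\sum_i \int_{\sigma_i}\omega_i=0$ to a single object by passing to $\bigoplus_i M_i$ is the same device as in the proof of Corollary~\ref{KORAB} (Lemma~\ref{LEMMASUM}), and your realization of $H_B(M)$ inside $\mathrm{Lie}(G^\natural)(\C)$ via logarithms of $\Qbar$-points of the universal vector extension is the correct setup for applying W\"ustholz's theorem. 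So the strategy is the right one and matches the cited source.

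However, as a proof the proposal has a genuine gap exactly at the step you yourself flag as ``the hard part'', and that step is not a routine verification: connected algebraic $\Qbar$-subgroups of $G^\natural$ do \emph{not} correspond exactly to sub-1-motives of $M$. The subgroup $H_0$ produced by the analytic subgroup theorem may meet the additive part of $G^\natural$ in an arbitrary vector subgroup, and even when its image determines a sub-1-motive $N'\subseteq M$ (keeping track of the lattice component of $\sigma$), $H_0$ need not contain the full universal vector extension $(N')^\natural$; in that case $\mathrm{Lie}(H_0)\subseteq\ker(\omega)$ does \emph{not} give $H_{dR}(N')=\mathrm{Lie}((N')^\natural)\subseteq\ker(\omega)$, which is what you need to conclude $\omega\in H_{dR}(M/N')^\vee$. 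Bridging precisely this discrepancy (via reduced/saturated 1-motives, Baker-type motives, etc.) is the substance of \cite{HW18}, so the proposal defers the main difficulty rather than resolving it. A smaller but real point: the analytic subgroup theorem as you state it (existence of a smallest connected subgroup $H_0$ with $v\in\mathrm{Lie}(H_0)(\C)$) is too weak for your next sentence; you need the stronger form that $\mathrm{Lie}(H_0)$ is the smallest $\Qbar$-subspace of $\mathrm{Lie}(G^\natural)$ whose complexification contains $v$ --- only then does $\omega(v)=0$ with $\omega$ defined over $\Qbar$ force $\mathrm{Lie}(H_0)\subseteq\ker(\omega)$.
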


The main observation of this note (for an arbitrary Abelian category with fiber functors and comparison isomorphism as above) is
\begin{SATZ}[see Corollary~\ref{KORAB}]\label{SATZPCALT}
Properties (A) and (B) are equivalent. 
\end{SATZ}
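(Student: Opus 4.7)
The plan is to reduce the equivalence of (A) and (B) to a concrete criterion for when a single pure tensor $(\sigma \otimes \omega)_M$ vanishes in $P(\mathcal{A})$. The first observation is that every element of $P(\mathcal{A})$ may be represented as such a pure tensor on a single object: given a finite sum $\sum_i (\sigma_i \otimes \omega_i)_{M_i}$, set $M = \bigoplus_i M_i$, view each $\sigma_i$ in $H_B(M)$ via the inclusion $M_i \hookrightarrow M$, and each $\omega_i$ in $H_{dR}(M)^\vee$ via the projection $M \twoheadrightarrow M_i$. Applying the defining relation to the inclusion $M_j \hookrightarrow M$ shows that the cross terms $(\sigma_j \otimes \omega_k)_M$ with $j \neq k$ vanish in $P(\mathcal{A})$, since the pullback of $\omega_k$ to $H_{dR}(M_j)^\vee$ is zero.

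The main technical input, which presumably is the elementary description of $P(\mathcal{A})$ advertised in the abstract, is the following key lemma: $(\sigma \otimes \omega)_M = 0$ in $P(\mathcal{A})$ if and only if there exists an exact sequence $0 \to N' \to M \to N \to 0$ with $\sigma \in H_B(N')$ and $\omega \in H_{dR}(N)^\vee$. The ``if'' direction is immediate from the defining relation applied to $i : N' \hookrightarrow M$: the hypothesis $\omega \in H_{dR}(N)^\vee$ means that $\omega$ annihilates the image of $H_{dR}(N') \to H_{dR}(M)$, hence $i^\vee \omega = 0$, and so $(\sigma \otimes \omega)_M = (\sigma \otimes i^\vee\omega)_{N'} = 0$. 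To establish the ``only if'' direction I would first reduce to the case where $M$ is the smallest subobject $\langle \sigma \rangle$ containing $\sigma$ (well defined because $H_B$ is exact and therefore commutes with intersections of subobjects), by observing that $(\sigma \otimes \omega)_M = (\sigma \otimes \omega')_{\langle \sigma \rangle}$ for $\omega'$ the restriction of $\omega$. One must then show that in this ``generated'' case, vanishing of $(\sigma \otimes \omega')_{\langle\sigma\rangle}$ forces $\omega' = 0$; this amounts to unwinding a chain of defining relations in the coend presentation of $P(\mathcal{A})$ and exploiting the exactness of both fiber functors, and this is the step where I expect the real work to lie.

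Granted the key lemma, the equivalence reduces to two short applications. For (B) $\Rightarrow$ (A), take $\xi \in \ker(\eval_\int)$, write $\xi = (\sigma \otimes \omega)_M$ by the first reduction, note that $\int_\sigma \omega = 0$, use (B) to produce the exact sequence, and conclude $\xi = 0$ by the easy direction of the key lemma. For (A) $\Rightarrow$ (B), start with $(M,\sigma,\omega)$ satisfying $\int_\sigma \omega = 0$; then $\eval_\int((\sigma \otimes \omega)_M) = 0$, so (A) forces $(\sigma \otimes \omega)_M = 0$ in $P(\mathcal{A})$, and the hard direction of the key lemma then yields the required exact sequence.
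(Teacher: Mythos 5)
Your reductions are sound and in fact coincide with the paper's: the observation that any element of $P(\mathcal{A})$ equals a single tensor $(\sigma\otimes\omega)_{\oplus_i M_i}$ is the paper's Lemma 2.6, the ``if'' direction of your key lemma is the paper's easy direction, and the two final applications reproduce the proof of Corollary 4.2 verbatim. The problem is that your ``key lemma'' --- $(\sigma\otimes\omega)_M=0$ in $P(\mathcal{A})$ implies the existence of an exact sequence $0\to N'\to M\to N\to 0$ with $\sigma\in H_B(N')$, $\omega\in H_{dR}(N)^\vee$ --- is precisely the entire nontrivial content of the paper, and you do not prove it; you explicitly defer it (``this is the step where I expect the real work to lie''). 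Your reduction to $M=\langle\sigma\rangle$ is legitimate (the statement ``$M=\langle\sigma\rangle$ and $(\sigma\otimes\omega)_M=0$ force $\omega=0$'' is indeed equivalent to the key lemma), but it does not make the remaining step any more tractable: it is not a matter of ``unwinding a chain of defining relations,'' because a vanishing in $P(\mathcal{A})$ is a priori witnessed by relations passing through arbitrary auxiliary objects of $\mathcal{A}$ and arbitrary auxiliary tensors, and nothing in the presentation confines these to data living on $M$ itself.

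Concretely, the paper needs two separate nontrivial inputs to close this gap. First, the Key Lemma 3.2: even for the space $P^\infty(M)$, defined as the quotient by the subspace \emph{generated} by exact-sequence relations on powers of $M$, one must show that any element of that subspace is itself a single exact-sequence relation; this is done by a careful analysis at the level of the monoid presentation of the tensor product (properties (i)--(vi) there), not by formal unwinding. Second, Theorem 4.1: the map $P^\infty(M)\to P(\mathcal{A})$ is injective, so that vanishing in $P(\mathcal{A})$ can be detected on powers of $M$ alone; this requires introducing the variant $\widetilde{Q}\mathcal{A}$ of Quillen's $Q$-construction, proving it is filtered (Lemma 3.4), and proving that $P^\infty$ is a functor on $\widetilde{Q}\mathcal{A}$ sending every morphism to an injection (Proposition 3.5) --- and the proof of that injectivity itself invokes the Key Lemma. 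Without supplying arguments of this kind, your proposal establishes only the easy direction (B) $\Rightarrow$ (A), which the paper itself notes is straightforward, and leaves (A) $\Rightarrow$ (B) unproven.
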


Actually in \cite[Theorem 3.6]{HW18} the period conjecture is shown for the Abelian category of 1-motives precisely by establishing (B) --- the statement of \cite[Theorem 3.6]{HW18} gives an even more precise information on $N'$ and $N$. It is easy to see that (B) implies (A).

One defines the formal periods\footnote{Warning: In other sources $P(M)$ often denotes the space of {\em actual periods}, i.e.\@ the image of $P(M)$ under $\eval_\int$. This is only isomorphic to $P(M)$ as defined here if the period conjecture holds.} $P(M)$ of $M \in \mathcal{A}$ as the image of $H_B(M) \otimes_\Q H_{dR}(M)^\vee$  in $P(\mathcal{A})$. 
The above theorem is related to a more concrete description of $P(M)$:

\begin{SATZ}[see Theorem~\ref{SATZQ2}]\label{SATZPERINTRO}
If $M \in \mathcal{A}$, then $P(M)$ is isomorphic to the vector space $H_B(M) \otimes_\Q H_{dR}(M)^\vee$ modulo the relations
\[ \sum_{i=1}^m \sigma_i  \otimes \omega_i  = 0 \] 
for every exact sequence
\[ \xymatrix{  0 \ar[r] & N' \ar[r] & M^m  \ar[r] & N  \ar[r] & 0    } \]
and for all $\sigma=(\sigma_i) \in H_B(N')$ and $\omega=(\omega_i) \in H_{dR}(N)^\vee$. 
\end{SATZ}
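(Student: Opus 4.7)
My plan is to prove the two inclusions $R(M) \subseteq K(M)$ and $K(M) \subseteq R(M)$, where $V(M) := H_B(M) \otimes_\Q H_{dR}(M)^\vee$, $R(M) \subseteq V(M)$ is the subspace generated by the claimed relations, and $K(M) := \ker(V(M) \to P(M))$.

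For $R(M) \subseteq K(M)$, I apply the coend relation to the injection $\iota: N' \to M^m$ of a given exact sequence $0 \to N' \xrightarrow{\iota} M^m \xrightarrow{\pi} N \to 0$, together with $\sigma \in H_B(N')$ and $\omega \in H_{dR}(N)^\vee$. Since $\pi\iota = 0$, this yields
\[ (\iota \sigma, \pi^\vee \omega)_{M^m} = (\sigma, (\pi \iota)^\vee \omega)_{N'} = 0 \]
in $P(\mathcal{A})$. Decomposing $\iota\sigma = \sum_i \iota_i(\sigma_i)$ and $\pi^\vee\omega = \sum_j \pi_j^\vee(\omega_j)$ via the coordinate inclusions $\iota_i: M \to M^m$ and projections $\pi_j: M^m \to M$, expanding bilinearly and applying the coend relations for each $\iota_i$ (using $\pi_j\iota_i = \delta_{ij}\id_M$), the left-hand side collapses in $P(\mathcal{A})$ to $\sum_i (\sigma_i, \omega_i)_M$; hence $\sum_i \sigma_i \otimes \omega_i \in K(M)$.

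For $K(M) \subseteq R(M)$, let $x \in K(M)$. By the coend definition, $x$ admits a finite presentation as a sum of elementary generators $(s_k, \alpha_k^\vee t_k)_{A_k} - (\alpha_k s_k, t_k)_{B_k}$ for morphisms $\alpha_k: A_k \to B_k$ in $\mathcal{A}$ and elements $s_k \in H_B(A_k)$, $t_k \in H_{dR}(B_k)^\vee$. The model case is a single zigzag through one intermediate object $N \neq M$ witnessing an identity $(\sigma_1, \omega_1)_M = (\sigma_3, \omega_3)_M$. For a ``V''-shape $M \xrightarrow{\alpha} N \xleftarrow{\beta} M$ with $\omega_1 = \alpha^\vee \omega_2$, $\omega_3 = \beta^\vee \omega_2$ and $\alpha \sigma_1 = \beta \sigma_3$, the pair $(\sigma_1, \sigma_3)$ lies in $H_B(\ker(\alpha, -\beta))$ while $(\omega_1, -\omega_3) = (\alpha, -\beta)^\vee \omega_2$ lies in $H_{dR}(\im(\alpha, -\beta))^\vee \subseteq H_{dR}(M^2)^\vee$; applying the theorem's relation to $0 \to \ker(\alpha, -\beta) \to M^2 \to \im(\alpha, -\beta) \to 0$ then gives $\sigma_1 \otimes \omega_1 - \sigma_3 \otimes \omega_3 \in R(M)$. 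The dual ``$\Lambda$''-shape $M \xleftarrow{\alpha} N \xrightarrow{\beta} M$ is handled by the exact sequence $0 \to \im\binom{\alpha}{\beta} \to M^2 \to \coker\binom{\alpha}{\beta} \to 0$, and the case of pure endomorphisms $\alpha: M \to M$ by $0 \to M \to M^2 \to M \to 0$ with first map $m \mapsto (\alpha m, -m)$.

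For a general presentation, cancellation at each intermediate object $L \neq M$ is encoded by a vanishing tensor relation $\sum_{k: A_k = L} s_k \otimes \alpha_k^\vee t_k = \sum_{k: B_k = L} (\alpha_k s_k) \otimes t_k$ in $V(L)$. I will invoke the elementary structure fact---if $\sum_i a_i \otimes b_i = 0$ in $A \otimes_\Q B$ then there exist $u_j \in B$ and $D_{ji} \in \Q$ with $b_i = \sum_j D_{ji} u_j$ and $\sum_i D_{ji} a_i = 0$---applied to the $H_{dR}(L)^\vee$-factor. The resulting $\Q$-coefficients allow the construction, for each $L$, of an exact sequence $0 \to N'_L \to M^{k_L} \to N_L \to 0$ built from the kernel and image of a suitable combined morphism between $L$ and $M^{k_L}$ (as in the ``V'' and ``$\Lambda$'' prototypes), whose associated theorem-relation absorbs the $L$-contribution to the presentation. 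Iterating the elimination over the finitely many intermediate objects rewrites $x$ as a sum of relations of the prescribed form, so $x \in R(M)$. The main obstacle is precisely this iterative elimination: the prototype constructions supply all the essential tricks, but coordinating them across a presentation with several intermediate objects and arrows of mixed direction requires careful combinatorial bookkeeping to simultaneously resolve all balance conditions.
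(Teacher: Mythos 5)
Your easy inclusion $R(M)\subseteq K(M)$ is fine and matches the paper (it is Lemma~\ref{LEMMASUM} plus the relation (\ref{eqprincipalrel}) applied to $\iota$ and $p$). The problem is the converse inclusion $K(M)\subseteq R(M)$, which is the actual content of the theorem, and there your argument stops exactly where the difficulty begins. You verify three prototype zigzags of length two (the ``V'', the ``$\Lambda$'', and a single endomorphism), and then assert that a general presentation of $0$ in the coend can be reduced to these by eliminating the intermediate objects one at a time, conceding that this ``requires careful combinatorial bookkeeping.'' That bookkeeping is not a routine extension of the prototypes: in a general presentation the elementary relations $(s_k\otimes\alpha_k^\vee t_k)_{A_k}-(\alpha_k s_k\otimes t_k)_{B_k}$ form chains of arbitrary length through arbitrary objects of $\mathcal{A}$, with arrows of mixed direction, and the cancellation at an intermediate object $L$ holds only in the tensor product $H_B(L)\otimes_\Q H_{dR}(L)^\vee$, not term by term. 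Eliminating $L$ by your linear-algebra device produces new cross terms between the \emph{other} objects adjacent to $L$, which in general are not powers of $M$ and are not of the prototype shape, so it is not clear the process terminates, nor that each step can be recorded inside $R(M)$, whose relations live only at $M$. Since the whole theorem is precisely the claim that such global relations can be localized at $M$, deferring this step leaves the proof with a genuine gap.

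The paper avoids this combinatorial elimination entirely by a structural argument: it defines $P^\infty(M)$ by the relations in your $R(M)$, proves the Key Lemma~\ref{LEMMAP} (every element of the \emph{subspace generated} by these relations is itself realized by a single exact sequence $0\to N'\to M^m\to N\to 0$), shows that $M\mapsto P^\infty(M)$ is a functor on the modified Quillen category $\widetilde{Q}\mathcal{A}$ sending every morphism to an injection (Proposition~\ref{PROPQ}, which needs the Key Lemma), proves that $\widetilde{Q}\mathcal{A}$ is filtered (Lemma~\ref{LEMMAQ}), and identifies $P(\mathcal{A})$ with $\colim_{\widetilde{Q}\mathcal{A}}P^\infty$. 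Injectivity of $P^\infty(M)\to P(\mathcal{A})$, hence $P^\infty(M)\cong P(M)$, then follows from general properties of filtered colimits with injective transition maps. If you want to salvage your direct approach you would need to supply an argument playing the role of this machinery; as written, the ``iterative elimination'' is an assertion, not a proof.
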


Note that one cannot simply take $m=1$ in most cases. Sometimes $m=2$ is sufficient, see below. 

Theorem~\ref{SATZPERINTRO} rises the question under which circumstances $P(M)$ can even be described only in terms of $M$ and its endomorphisms, i.e.\@ when is $P(M)$ equal to the vector space $H_{B}(M) \otimes_\Q H_{dR}(M)^\vee$
modulo the relations
\[  \alpha(\sigma) \otimes \omega =  \sigma \otimes \alpha^\vee(\omega)  \]
for all $\alpha \in \End(M)$? This is the case for {\em principal} objects (cf.\@ Definition~\ref{DEFWP}) in $\mathcal{A}$, in particular for all semi-simple objects (e.g.\@ pure motives):

\begin{PROP}[see Proposition~\ref{PROPWP}]
There is a surjection
\[  H_B(M) \otimes_{\End(M)^{\op}} H_{dR}(M)^\vee \twoheadrightarrow P(M). \]
which is an isomorphism if $M$ is principal. In this case one can take $m=2$ in Theorem \ref{SATZPERINTRO}.
\end{PROP}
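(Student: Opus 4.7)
The plan is to split the statement into three claims: existence of the surjection, its injectivity when $M$ is principal, and the reduction to $m=2$. For the surjection, I would simply apply the morphism relation defining $P(\mathcal{A})$ to an arbitrary $\alpha \in \End(M)$ viewed as a morphism $M \to M$, which gives $(\alpha(\sigma) \otimes \omega)_M = (\sigma \otimes \alpha^\vee(\omega))_M$ in $P(M)$. Hence the canonical surjection $H_B(M) \otimes_\Q H_{dR}(M)^\vee \twoheadrightarrow P(M)$ automatically factors through the $\End(M)^{\op}$-balanced tensor product, and surjectivity is immediate from the definition of $P(M)$ as an image.

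For injectivity under principality, I would invoke Theorem~\ref{SATZPERINTRO} to reduce to the following: every relation $\sum_i \sigma_i \otimes \omega_i = 0$ arising from an exact sequence $0 \to N' \to M^m \to N \to 0$ with $\sigma \in H_B(N')$ and $\omega \in H_{dR}(N)^\vee$ already vanishes after tensoring over $\End(M)^{\op}$. The content of Definition~\ref{DEFWP} should be tailored to guarantee that the monomorphism $N' \hookrightarrow M^m$ is the image of some $\beta : M^k \to M^m$ represented by a matrix $(\beta_{ij}) \in \Mat_{m \times k}(\End(M))$. Exactness of $H_B$ then writes $\sigma_i = \sum_j \beta_{ij}(\tau_j)$ for suitable $\tau_j \in H_B(M)$, while the hypothesis $\omega \in H_{dR}(N)^\vee$, together with exactness of $H_{dR}$, translates into $\sum_i \beta_{ij}^\vee(\omega_i) = 0$ in $H_{dR}(M)^\vee$ for every $j$. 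A one-line computation using the balancing,
\[ \sum_i \sigma_i \otimes \omega_i = \sum_{i,j} \beta_{ij}(\tau_j) \otimes \omega_i = \sum_j \tau_j \otimes \Big(\sum_i \beta_{ij}^\vee(\omega_i)\Big) = 0, \]
then closes the argument.

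For the final assertion that $m=2$ suffices, I would exhibit each endomorphism relation explicitly as an $m=2$ relation via the exact sequence $0 \to \ker \beta \to M^2 \xrightarrow{\beta} M \to 0$ with $\beta(x,y) = \alpha(x) + y$: Betti classes in $H_B(\ker \beta)$ are precisely the pairs $(\sigma, -\alpha(\sigma))$, while an $\omega \in H_{dR}(M)^\vee$ pulls back to $(\alpha^\vee(\omega), \omega) \in H_{dR}(M)^{\vee,2}$, so the $m=2$ relation of Theorem~\ref{SATZPERINTRO} becomes exactly $\sigma \otimes \alpha^\vee(\omega) = \alpha(\sigma) \otimes \omega$. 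Combined with the principal case, this shows that every relation in $P(M)$ is generated by $m=2$ relations.

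The main obstacle — and the only place where substantive work is needed — is the careful use of Definition~\ref{DEFWP}; once the content of \emph{principal} is pinned down as the statement that every subobject of every $M^m$ is the image of some morphism $M^k \to M^m$ with entries in $\End(M)$, the argument above reduces to a short exercise in exactness of the two fiber functors and the universal property of the balanced tensor product. The semi-simple case is then automatic: any subobject of $M^m$ is a direct summand, realized as the image of the corresponding idempotent in $\End(M^m) = \Mat_m(\End(M))$.
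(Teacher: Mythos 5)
Your surjection argument and your $m=2$ realization of the endomorphism relations (via $0 \to \ker\beta \to M^2 \xrightarrow{\beta} M \to 0$, $\beta(x,y)=\alpha(x)+y$) are correct and coincide with the paper's, which uses the exact sequence $M \xrightarrow{(e,\id)} M^2 \xrightarrow{(\id,-e)} M$ to put the relation $\sigma\otimes e^\vee(\omega)=e(\sigma)\otimes\omega$ into $P^2(M)$. The problem is the injectivity step: you ``pin down'' Definition~\ref{DEFWP} as the statement that every subobject $N'\subset M^m$ is the image of some morphism $M^k\to M^m$. That is the paper's notion of \emph{left principal}, which is strictly stronger than \emph{principal}. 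Condition (P1 left) of Definition~\ref{DEFWP} only provides a morphism $\alpha=(e_{ij}):M^k\to M^m$ \emph{and} a second morphism $\beta=(f_{ik}):M^k\to M^l$ such that $N'$ is the image of $\ker(\beta)$ under $\alpha$; your hypothesis is the special case $l=0$. This weaker notion is the one actually verified in the applications (e.g.\ Proposition~\ref{PROP1MOT} proves principality of $M\oplus\Q\oplus\Q(1)$ through the saturation machinery of the appendix, not left principality), so your argument proves the isomorphism only for left principal --- in particular semi-simple --- objects, not for the class claimed in the statement.

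Concretely, under (P1 left) your one-line balancing computation breaks down: a class $\sigma\in H_B(N')$ now lifts only to $\widetilde\sigma\in H_B(\ker\beta)\subset H_B(M^k)$, and $\omega\in H_{dR}(N)^\vee$ no longer satisfies $\alpha^\vee(\omega)=0$; one only knows that the functional with components $\sum_j e_{ij}^\vee(\omega_j)$ vanishes on $H_{dR}(\ker\beta)$. The paper's proof repairs exactly this: it factors that functional through $H_{dR}(\im\beta)$ and extends it to some $(\omega'_k)\in H_{dR}(M^l)^\vee$ with $\sum_j e_{ij}^\vee(\omega_j)=\sum_k f_{ik}^\vee(\omega'_k)$, then applies the $\End(M)$-balancing a second time to the matrix $(f_{ik})$ and uses $\sum_i f_{ik}(\widetilde\sigma_i)=0$ (since $\widetilde\sigma\in H_B(\ker\beta)$) to conclude $\sum_j\sigma_j\otimes\omega_j=0$ in $H_B(M)\otimes_{\End(M)^{\op}}H_{dR}(M)^\vee$. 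Without this second balancing step your proof does not cover the stated hypothesis, so as written there is a genuine gap, even though the computation you give is correct for the left principal case.
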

As an application we show in Section~\ref{SECT1MOT} that the structure theorems for periods of 1-motives obtained in \cite[Chapter 9]{HW18} can be formally  explained by the fact that certain sufficiently ``saturated'' 1-motives are principal.

\begin{PAR}
Assume that $\mathcal{A} \cong \cat{$G$-Rep}$ is in addition neutral Tannakian, where $G$ is
the corresponding motivic Galois group defined over $\Q$, and that the fiber functors and the comparison isomorphism $\int$ are compatible with the tensor structure. 
Denote by $X$ the $G_\Qbar$-torsor of isomorphisms between the fiber functors $H_{B} \otimes_\Q \Qbar$ and $H_{dR}$. The $G_\Qbar$-torsor $X$ is non-canonically isomorphic
to $G_\Qbar$ and thus its coordinate ring $\OOO_X$ is non-canonically isomorphic to $\OOO_G \otimes_\Q \Qbar$.

By construction (cf.\@ Deligne and Milne \cite[Chapter II]{LNM900}) the space of formal periods as defined above is equal to the coordinate ring of $X$: 
\[ P(\mathcal{A}) =\OOO_X. \]
The ring structure depends on the tensor product and is on $P(\mathcal{A})$ given by 
\[ (\sigma_1\otimes \omega_1)_{M_1} \cdot (\sigma_2\otimes \omega_2)_{M_2} = ((\sigma_1 \otimes \sigma_2) \otimes (\omega_1 \otimes \omega_2))_{M_1 \otimes M_2}.   \]

Elements of (\ref{eqBdR}) can be seen as functions on $X$ as follows: A natural isomorphism $\tau$ between fiber functors is given by isomorphisms
\[ \tau_M: H_{B}(M) \otimes_\Q \Qbar \rightarrow H_{dR}(M)   \]
for all $M$ compatible with morphisms in $\mathcal{A}$. An element $\gamma:=(\sigma\otimes \omega)_M$ in $\OOO_X$ associates with such a natural isomorphism
the value 
\[ \eval_\tau(\gamma):= \omega(\tau_M(\sigma)). \] 
For the definition of $\eval_\tau$ it is irrelevant whether $\tau$ is compatible with the tensor structure or not. 
If it is, then $\eval_\tau: \OOO_X \rightarrow \Qbar$ is a ring homomorphism. In this setting the period conjecture is also equivalent to statement:
\begin{itemize}
\item[(C)] $X$ is connected and $\int$, considered as a point in  $X(\C)$, is generic. 
\end{itemize}
This article, however, is written entirely in the Abelian setting, the tensor structure will not play any role whatsoever.  
\end{PAR}

The author thanks Annette Huber and Nicola Nesa for raising questions that resulted in writing this article, for interesting discussions, and useful comments.

\section{Formal periods on Abelian categories}

As explained in the introduction, the tensor structure is irrelevant for the structure of formal periods as vector space. To obtain the results we work thus entirely in the following setting: 
\begin{DEF}\label{DEFFIBFUN}
Let $\mathcal{A}$ be a $\Q$-linear Abelian category and let $K \supset \Q$ be a field. A {\bf fiber functor}, defined over $K$, is a $\Q$-linear, exact, and faithful functor
\[ \mathcal{A} \rightarrow \cat{ f.d.-$K$-Vect} \]
into the category of finite dimensional $K$-vector spaces.
\end{DEF}

\begin{PAR}\label{PARTUPLE}
Fix a subfield $K$ of $\C$. We consider a quadruple $(\mathcal{A}, H_{B}, H_{dR}, \int)$ of a  $\Q$-linear Abelian category $\mathcal{A}$ together with two fiber functors $H_{B}, H_{dR}$, defined over $\Q$, and $K$, respectively, and a natural isomorphism
\[ \int: H_{B} \otimes_\Q \C \rightarrow H_{dR} \otimes_K \C. \]
\end{PAR}

\begin{DEF}[{cf.\@ \cite[Chapter 13]{HMS17}}]\label{DEFFORMALPERIODS}
We define the space of {\bf formal periods} $P(\mathcal{A})$ as the $K$-vector space
\[ \bigoplus_M H_B(M) \otimes_\Q H_{dR}(M)^\vee   \]
subject to the relations
\begin{equation}\label{eqprincipalrel}
 (\sigma \otimes \alpha^\vee(\omega))_M = (\alpha(\sigma) \otimes \omega)_N 
\end{equation}
for all morphisms $\alpha: M \rightarrow N$ in $\mathcal{A}$,  $\sigma \in H_B(M)$, and $\omega \in H_{dR}(N)^\vee$.

We define $P(M)$ as the image of $H_{B}(M) \otimes_\Q H_{dR}(M)^\vee$ in $P(\mathcal{A})$. 
\end{DEF}

\begin{BEM}
As discussed in the Introduction, if $\mathcal{A}$ is neutral Tannakian, and if $H_B$ and $H_{dR}$ are compatible with the tensor structure then $P(\mathcal{A})$ is precisely
the coordinate ring of the torsor $X$ of isomorphisms between $H_B \otimes_\Q K$ and $H_{dR}$. 
\end{BEM}

\begin{DEF}We say that the quadruple $(\mathcal{A}, H_{B}, H_{dR}, \int)$ as in \ref{PARTUPLE} satisfies the {\bf period conjecture} if the
$K$-linear map
\begin{eqnarray*}
 \eval_\int: P(\mathcal{A}) &\rightarrow& \C \\
 (\sigma \otimes \omega)_M &\mapsto&  \int_{\sigma} \omega := \omega (\int_M(\sigma))
 \end{eqnarray*} 
is {\em injective}. 
\end{DEF}

\begin{LEMMA}\label{LEMMASUM}For $i=1 \dots n$, let $M_i \in \mathcal{A}$, $\sigma_i \in H_B(M_i)$, and $\omega_i \in H_{dR}(M_i)^\vee$. 
Then we have in $P(\mathcal{A})$ the equality
\[ \sum_{i=1}^n (\sigma_i \otimes \omega_i)_{M_i} =   \left(\sum_{i=1}^n  (\sigma_i \otimes  \omega_i) \right)_{\bigoplus_i M_i}.  \]
\end{LEMMA}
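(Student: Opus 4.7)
The plan is to exploit the canonical splittings of a finite direct sum together with the defining relation (\ref{eqprincipalrel}), applied to the projections $\pi_j \colon \bigoplus_i M_i \to M_j$ and inclusions $\iota_j \colon M_j \to \bigoplus_i M_i$.

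First I would recall that every fiber functor in the sense of Definition~\ref{DEFFIBFUN} is additive (being $\Q$-linear and exact), so both $H_B$ and $H_{dR}$ preserve finite direct sums. Under these identifications, the element $\sigma_i \in H_B(M_i)$ corresponds to $H_B(\iota_i)(\sigma_i) \in H_B(\bigoplus_k M_k)$, and $\omega_i \in H_{dR}(M_i)^\vee$ corresponds to $H_{dR}(\pi_i)^\vee(\omega_i) \in H_{dR}(\bigoplus_k M_k)^\vee$. The element appearing on the right-hand side is therefore
\[
\Bigl(\sum_i H_B(\iota_i)(\sigma_i) \,\otimes\, \sum_j H_{dR}(\pi_j)^\vee(\omega_j)\Bigr)_{\bigoplus_k M_k},
\]
which by bilinearity of the tensor expands into $\sum_{i,j}\bigl(H_B(\iota_i)(\sigma_i) \otimes H_{dR}(\pi_j)^\vee(\omega_j)\bigr)_{\bigoplus_k M_k}$.

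Next I would apply the relation (\ref{eqprincipalrel}) term by term with $\alpha = \pi_j \colon \bigoplus_k M_k \to M_j$, which converts the $(i,j)$-summand to $\bigl(H_B(\pi_j \circ \iota_i)(\sigma_i) \otimes \omega_j\bigr)_{M_j}$. Because $\pi_j \circ \iota_i = \delta_{ij}\,\id_{M_i}$ in $\mathcal{A}$ and $H_B$ is functorial, only the diagonal terms survive, yielding $\sum_i (\sigma_i \otimes \omega_i)_{M_i}$, which is the left-hand side.

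There is no real obstacle here: the only subtlety to keep track of is the direction of the arrows — the $H_{dR}$-covectors must be pushed forward along the $^\vee$ of a morphism out of $\bigoplus_k M_k$ (hence the projections $\pi_j$), while the Betti classes are pushed along a morphism into $\bigoplus_k M_k$ (hence the inclusions $\iota_i$). The relation (\ref{eqprincipalrel}) is then exactly designed to collapse a pairing of the form ``$\iota_i$ on one side, $\pi_j$ on the other'' to a computation of $\pi_j \circ \iota_i$, and the Kronecker delta does the rest.
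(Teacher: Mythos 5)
Your proof is correct and is essentially the paper's own argument: both simply apply the defining relation (\ref{eqprincipalrel}) to the canonical inclusions/projections of the direct sum (the paper reduces to $n=2$ and uses the inclusions $M_1\hookrightarrow M_1\oplus M_2$, $M_2\hookrightarrow M_1\oplus M_2$, while you use the projections $\pi_j$ for general $n$ with the Kronecker delta doing the bookkeeping). One small quibble: the right-hand side of the lemma is the sum of the diagonal tensors $\iota_i(\sigma_i)\otimes \pi_i^\vee(\omega_i)$, not the single tensor of sums you wrote down, but this is harmless since your term-by-term computation for $i=j$ gives exactly the stated identity and the cross terms you carry along are killed by the same relation.
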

\begin{proof}
It suffices to see this for $n=2$. The inclusion $M_1 \hookrightarrow M_1 \oplus M_2$ (or equally well the projection $M_1 \oplus M_2 \rightarrow M_1$) gives 
\[ ((\sigma_1,0) \otimes (\omega_1,0))_{M_1 \oplus M_2}  =  (\sigma_1 \otimes \omega_1)_{M_1} \] 
and similarly $M_2 \hookrightarrow M_1 \oplus M_2$ gives
\[ ((0, \sigma_2) \otimes (0,\omega_2))_{M_1 \oplus M_2}  =  (\sigma_2 \otimes \omega_2)_{M_2}. \]
The statement follows.  
\end{proof}

\section{Formal periods of a single object}

Let the setting be as in \ref{PARTUPLE}.
We now redefine formal periods in a different way (Theorem~\ref{SATZQ2} below states that indeed $P^\infty(M) = P(M)$):

\begin{DEF}\label{DEFFPDEPTH}
Let $(\mathcal{A}, H_{B}, H_{dR})$ be as in \ref{PARTUPLE} and
let $M \in \mathcal{A}$. 
The space of {\bf formal periods of depth $k$} of $M$, denoted $P^k(M)$, is the quotient of the vector space $H_{B}(M) \otimes_\Q H_{dR}(M)^\vee$ modulo the subspace generated by 
\[ \sum_{i=1}^m \sigma_i  \otimes \omega_i   \] 
for every exact sequence
\[ \xymatrix{  0 \ar[r] & N' \ar[r] & M^m  \ar[r] & N  \ar[r] & 0,    } \]
$(\sigma_i) \in H_B(N')$, and $(\omega_i) \in H_{dR}(N)^\vee$, such that $m \le k$. 

We also define 
\[ P^\infty(M) := \lim_k P^k(M). \] 
\end{DEF}
Since the $P^k(M)$ are all (successive) quotients of the same finite dimensional vector space, we have obviously  $P^k(M) = P^{k+1}(M) = \cdots =  P^\infty(M)$ for some $k$. 

\begin{KEYLEMMA}\label{LEMMAP}
For each relation of the form $\sum_{i=1}^m \sigma_i \otimes \omega_i = 0$ in $P^\infty(M)$ there is an exact sequence
\[ \xymatrix{  0 \ar[r] & N' \ar[r] & M^m  \ar[r] & N  \ar[r] & 0    } \]
with $(\sigma_i) \in H_B(N')$ and $(\omega_i) \in H_{dR}(N)^{\vee}$.
\end{KEYLEMMA}
Notice that this is not a tautology because a priori $P^\infty(M)$ is defined as the quotient modulo the vector subspace {\em generated} by such expressions.  
\begin{proof}
The tensor product $H_B(M) \otimes_\Q H_{dR}(M)^\vee$ can be defined as the quotient of the Abelian monoid generated by abstract symbols
\[ \sigma \otimes \omega \]
with $\sigma \in  H_B(M)$ and $\omega \in H_{dR}(M)$
 modulo the relations
\begin{equation*}
\begin{array}{rrcl}
\text{(i)} & x + \sigma  \otimes 0 \,\ \sim \,\ x + 0  \otimes \omega & \sim & x  \\
\text{(ii)} & x + \sigma_1  \otimes  \omega + \sigma_2 \otimes  \omega & \sim & x + (\sigma_1 + \sigma_2) \otimes  \omega \\
\text{(iii)} & x + \sigma  \otimes  \omega_1 + \sigma \otimes  \omega_2& \sim & x + \sigma \otimes  (\omega_1 + \omega_2) \\
\text{(iv)} & x + \sigma \otimes (\lambda \omega) & \sim  & x + (\lambda \sigma) \otimes  \omega
\end{array}
\end{equation*}
for all formal sums $x = \sum_i \sigma_i' \otimes \omega_i'$, and for all $\sigma, \sigma_1, \sigma_2 \in H_B(M)$,  $\omega, \omega_1, \omega_2 \in H_{dR}(M)^\vee$,  $\lambda \in \Q^*$.

Let $R$ be the submonoid of all formal expressions $\sum_i^n{} \sigma_i \otimes \omega_i$ for each exact sequence
\[ \xymatrix{  0 \ar[r] & N' \ar[r]& M^n  \ar[r] & N  \ar[r] & 0    } \]
and $\sigma=(\sigma_i) \in H_{B}(N')$ and $\omega=(\omega_i) \in H_{dR}(N)^\vee$. This  is indeed a submonoid with 0 taking into account the direct sum of exact sequences
\[ \xymatrix{  0 \ar[r] & N'_1 \oplus N_2' \ar[r] & M^{n_1} \oplus M^{n_2}  \ar[r] & N_1 \oplus N_2  \ar[r] & 0    } \]
and neglecting the ordering is legitimate\footnote{i.e.\@ a priori we have $\sum_i^n{} \sigma_i \otimes \omega_i \in R$ if and only if $\sum_i^n{} \sigma_{\tau(i)} \otimes \omega_{\tau(i)} \in R$ for any permutation $\tau$} because the symmetric group $S_n$ acts on $M^n$. 

We will show below the following properties:
\begin{equation*}
\begin{array}{rrcl}
\text{(i)} &x + \sigma \otimes 0 \in R \,\  \Leftrightarrow \,\ x + 0  \otimes \omega  \in R  & \Leftrightarrow & x \in R \\
\text{(ii)} & x + \sigma_1  \otimes  \omega + \sigma_2 \otimes  \omega \in R & \Leftrightarrow & x + (\sigma_1 + \sigma_2) \otimes  \omega \in R \\
\text{(iii)} & x + \sigma  \otimes  \omega_1 + \sigma \otimes  \omega_2 \in R & \Leftrightarrow & x + \sigma \otimes  (\omega_1 + \omega_2) \in R \\
\text{(iv)} & x + \sigma \otimes (\lambda \omega) \in R& \Leftrightarrow& x + (\lambda \sigma) \otimes  \omega \in R  \\
\text{(v)} & \sum \sigma_i \otimes \omega_i \in R & \Rightarrow &  \sum \sigma_i \otimes (\mu \omega_i) \in R   \\
\text{(vi)} & \sigma  \otimes \omega + (-\sigma) \otimes  \omega \in R & 
\end{array}
\end{equation*}
for all $x \in R, \sigma, \sigma_1, \sigma_2 \in H_B(M)$,  $\omega, \omega_1, \omega_2 \in H_{dR}(M)^\vee$,  $\lambda \in \Q^*$ and $\mu \in K$.

Let now $\gamma = \sum_{i=1}^n \sigma_i \otimes \omega_i$ be a formal sum which maps to zero in $P^\infty(M)$. This means using (v) and the sum in $R$ that there is an element
\[  \gamma' = \sum_{i=1}^m \sigma_i' \otimes \omega_i'   \]
in $R$ mapping to $\gamma$ in the tensor product. Properties (i--iv) together with the observation before show that also  $\gamma \in R$. 
This gives the Lemma, provided that we prove the properties (i--v). 

Recall that $H_B$ and $H_{dR}^\vee$ are both exact functors by definition. 

For ``$x \in R \Rightarrow x + 0  \otimes \omega  \in R$''  the exact sequence
\[ \xymatrix{  0 \ar[r] & 0 \ar[r] & M  \ar[r] & M  \ar[r] & 0    } \]
shows that for any $\omega \in H_{dR}(M)^\vee$ we have $0 \otimes \omega \in R$.
For ``$x + 0  \otimes \omega \in R \Rightarrow x \in R$'' consider an exact sequence 
\[ \xymatrix{  0 \ar[r] & N' \ar[r] & M^n \oplus M  \ar[r] & N  \ar[r] & 0    } \]
with $(\sigma_1', \dots, \sigma_n', 0) \in H_B(N')$ and $(\omega_1', \dots, \omega_n',\omega) \in H_{dR}(N)^\vee$.
We get a morphism of exact sequences
\[ \xymatrix{  0 \ar[r]  & N' \cap M^n   \ar[r] \ar[d]^{} & M^n   \ar[d]^{\iota}  \ar[r] & L \ar[d]  \ar[r] & 0    \\
  0 \ar[r] & N' \ar[r] & M^n \oplus M  \ar[r] & N  \ar[r] & 0    } \]
where the morphism $\iota: M^n \rightarrow M^n \oplus M$ is the obvious inclusion and $L$ is the quotient and obviously  $(\sigma_1', \dots, \sigma_n') \in H_B(N' \cap M^n)$ and $(\omega_1', \dots, \omega_n') \in H_{dR}(L)^\vee$. The other assertions of (i) are dual. 

``(ii) $\Leftarrow$'' follows from (vi) and ``(ii) $\Rightarrow$''.
For ``(ii) $\Rightarrow$''  consider an exact sequence
\[ \xymatrix{  0 \ar[r] & L' \ar[r] & M^n \oplus M^2  \ar[r] & L  \ar[r] & 0    } \]
with $(\sigma_1', \dots, \sigma_n', \sigma_1,\sigma_2) \in H_B(L')$ and $(\omega_1', \dots, \omega_n',\omega,\omega) \in H_{dR}(L)^\vee$.
Consider also the morphism 
\[ p: M^n \oplus M^2 \rightarrow M^n \oplus M \] 
which is the identity on $M^n$ and the sum on $M^2$. We get a morphism of exact sequences
\[ \xymatrix{  0 \ar[r]  & L' \ar[r] \ar[d]^{} & M^n \oplus M^2  \ar[d]^{p}  \ar[r] & L\ar[d]  \ar[r] & 0    \\
  0 \ar[r] & N' \ar[r] & M^n \oplus M  \ar[r] & N  \ar[r] & 0    } \]
where $N':=p(L')$ and $N$ is the quotient. One checks that $(\sigma_1', \dots, \sigma_n', \sigma_1 + \sigma_2) \in H_B(N')$ and $(\omega_1', \dots, \omega_n',\omega) \in H_{dR}(N)^{\vee}$.

``(iii) $\Leftarrow$'' follows from (vi) and ``(iii) $\Rightarrow$'' and
the proof of ``(iii) $\Rightarrow$''  is dual to ``(ii) $\Rightarrow$''  proven above.

For (iv) consider an exact sequence
\[ \xymatrix{  0 \ar[r] & N' \ar[r] & M^n \oplus M  \ar[r] & N  \ar[r] & 0    } \]
with $(\sigma_1', \dots, \sigma_n', \sigma) \in H_B(N')$ and $(\omega_1', \dots, \omega_n', \lambda \omega) \in H_{dR}(N)^\vee$.
We may apply the automorphism given by $(1, \dots, 1, \lambda)$ to $M^n \oplus M$. It maps the above sequence to a shifted sequence
\[ \xymatrix{  0 \ar[r] & L' \ar[r] & M^n \oplus M  \ar[r] & L  \ar[r] & 0    } \]
and the above elements to $(\sigma_1', \dots, \sigma_n', \lambda \sigma) \in H_B(L')$ and $(\omega_1', \dots, \omega_n', \omega) \in H_{dR}(L)^\vee$.
The statement follows. 

Property (v) follows directly from the definition of $R$.

For (vi) consider the exact sequence
\[ \xymatrix{  0 \ar[r] & M  \ar[rr]^-{(\id, \id)} & & M^2  \ar[rr]^-{(\id, -\id)} & & M \ar[r] & 0,    } \]
$(\sigma,\sigma) \in H_B(M^2)$, and $(\omega,-\omega) \in H_{dR}(M^2)^\vee$. 
\end{proof}

\begin{BEM}
The Key Lemma obviously implies that $P^k(M) = P^{k+1}(M) = \cdots =  P^\infty(M)$ for $k=\dim(H_B(M))$.
\end{BEM}

\begin{PAR}
Recall Quillen's {\bf Q-construction} \cite{Qui73}. $Q\mathcal{A}$ is the category with the same objects as $\mathcal{A}$ and with morphisms being isomorphism classes of spans $f=(p_f, \iota_f)$ of the form
\[ \xymatrix{ & N \ar@{->>}[ld]_-{p_f} \ar@{^{(}->}[rd]^-{\iota_f} & \\ M & & M'.  } \]
Composition is given by the fiber product. We will need a variant of this: $\widetilde{Q}\mathcal{A}$ is defined in the same way but with morphisms being equivalence classes generated by $f \sim g$ if there exists a commutative diagram:
\[ \xymatrix{
& N \ar@{->>}[ld]_-{p_f} \ar@{^{(}->}[dd] \ar@{^{(}->}[rd]^-{\iota_f} &  \\
M &   & M' \\
& N ' \ar@{->>}[lu]^-{p_g} \ar@{^{(}->}[ru]_-{\iota_g}  & 
}\]
\end{PAR}

\begin{LEMMA}\label{LEMMAQ}
The category $\widetilde{Q}\mathcal{A}$ is filtered. 
\end{LEMMA}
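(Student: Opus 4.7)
The plan is to verify the three standard axioms for a filtered category. The first (non-emptiness) is immediate, since $\mathcal{A}$ is non-empty. For the second, given $M_1, M_2$, take the direct sum $M_1 \oplus M_2$ together with the evident spans $M_i \twoheadleftarrow M_i \hookrightarrow M_1 \oplus M_2$ (identity on the left, canonical inclusion on the right); these provide morphisms $M_i \to M_1 \oplus M_2$ in $\widetilde{Q}\mathcal{A}$.

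The real content lies in the coequalizer axiom. Given two parallel morphisms $f = (p_f, \iota_f)$ and $g = (p_g, \iota_g)$ from $M$ to $M'$, with witnessing objects $N_f, N_g$, I propose to take $M'' := M' \oplus M$ together with the morphism $h: M' \to M''$ represented by the span $M' \twoheadleftarrow M' \oplus M \hookrightarrow M' \oplus M$ whose left leg is the first projection $\pi_1$ and right leg is the identity.

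Composing $h$ with $f$: the pullback of the mono $\iota_f$ along the epi $\pi_1$ is the object $N_f \oplus M$, and the resulting composite span is $M \twoheadleftarrow N_f \oplus M \hookrightarrow M' \oplus M$ with structure maps $(a,m) \mapsto p_f(a)$ and $(a,m) \mapsto (\iota_f(a), m)$. The key reduction step is to pass, via the equivalence relation defining $\widetilde{Q}\mathcal{A}$, to the sub-span given by the graph $\{(a, p_f(a)) : a \in N_f\} \cong N_f$ inside $N_f \oplus M$: its restriction to $M$ is still the epi $p_f$, and it embeds into $M' \oplus M$ via the map $(\iota_f, p_f): N_f \to M' \oplus M$. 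Hence in $\widetilde{Q}\mathcal{A}$, $h \circ f$ equals the span $M \twoheadleftarrow N_f \hookrightarrow M' \oplus M$ with inclusion $(\iota_f, p_f)$; analogously $h \circ g$ equals the span with inclusion $(\iota_g, p_g): N_g \to M' \oplus M$.

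To finish, the two reduced spans fit as sub-spans of a common super-span inside $M' \oplus M$, namely the subobject $\im(\iota_f, p_f) + \im(\iota_g, p_g)$ equipped with the restriction of the second projection $\pi_2: M' \oplus M \to M$. This restriction recovers $p_f$ on the first summand and $p_g$ on the second (by design of the embeddings), and is surjective onto $M$; hence it defines a valid span of which both reduced versions are sub-spans, and so $h \circ f = h \circ g$ in $\widetilde{Q}\mathcal{A}$ by the generating equivalence. The conceptual point --- and where the construction required some thought --- is that the direction of arrows in the $Q$-construction forces us to \emph{enlarge} $M'$ when defining $h$, and the extra summand $M$ in $M''$ supplies precisely the room needed to encode the projections $p_f, p_g$ as part of the embedding so that the two subquotient structures on $M'$ can be glued.
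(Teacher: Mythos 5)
Your proof is correct. The overall strategy is the same as the paper's: the first two axioms are handled by direct sums exactly as in the paper, and for the coequalization axiom both arguments enlarge $M'$ to a direct sum, take the mono leg of $h$ to be the identity (so that composing with $h$ is just a pullback along an epimorphism onto $M'$), shrink $h\circ f$ and $h\circ g$ using the generating relation, and finally merge the two shrunken spans inside a single common span. The implementations differ in the details, though. The paper takes $M''=M'\oplus N_f\oplus N_g$ with epi leg $(\id,\iota_f,\iota_g)$, reduces $h\circ f$ to the span with object $N_f$, epi leg $p_f$ and mono leg $(0,\id,0)$ (discarding the $M'$-coordinate altogether), and glues via the span with object $N_g\oplus N_f$ and mono leg $(0,\id,\id)$. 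You instead take the uniform object $M''=M'\oplus M$ --- notably independent of $f$ and $g$ --- reduce $h\circ f$ to the graph span with mono leg $(\iota_f,p_f)$ via the embedding $(\id,p_f)\colon N_f\to N_f\oplus M$, and glue inside the subobject $S:=\im(\iota_f,p_f)+\im(\iota_g,p_g)$ of $M'\oplus M$, whose second projection restricts to $p_f$ and $p_g$ on the respective images and is hence an epimorphism onto $M$. All the verifications your steps require (that $(\id,p_f)$ and $(\iota_f,p_f)$ are monos compatible with both legs, and that $\pi_2|_S$ is epi) do hold, so each step is a legitimate instance of the generating relation. Your variant has the mild advantage that the coequalizing morphism $h\colon M'\to M'\oplus M$ is canonical, the same for every parallel pair $M\to M'$, whereas the paper's $h$ depends on the chosen spans; the paper's variant, in turn, avoids forming sums of subobjects by working with explicit direct summands throughout.
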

\begin{proof}
For two objects $A, B$ in $Q\mathcal{A}$ there are obvious morphisms $A \rightarrow A \oplus B$ and $B \rightarrow A \oplus B$.

It therefore suffices to see that for any two morphisms 
\[ f: \vcenter{ \xymatrix{ & N \ar@{->>}[ld]_-{p_f} \ar@{^{(}->}[rd]^-{\iota_f} & \\ M & & M'  }  }
\quad \text{and} \quad g: \vcenter{  \xymatrix{ & N' \ar@{->>}[ld]_-{p_g} \ar@{^{(}->}[rd]^-{\iota_g} & \\ M & & M'  } } \]
 there is a morphism $h: M' \rightarrow M''$ such that $h \circ f = h \circ g$ in $\widetilde{Q}\mathcal{A}$.
We will show that the following morphism does the job: 
\[ h: \vcenter{  \xymatrix{ & M' \oplus N \oplus N' \ar@{->>}[ld]_-{(\id, \iota_f, \iota_g)} \ar@{=}[rd] & \\ M'  & & M' \oplus N \oplus N' .  } }\]
Consider the composition
\[ h \circ f: \vcenter{  \xymatrix@C=4em{ & & \Box \ar@{->>}[ld] \ar@{^{(}->}[rd]  & &   \\
& N \ar@{->>}[ld]_-{p_f} \ar@{^{(}->}[rd]_-{\iota_f} & & M' \oplus N \oplus N' \ar@{->>}[ld]^-{(\id, \iota_f, \iota_g)} \ar@{=}[rd]  \\ 
M & & M' & & M' \oplus N \oplus N' .  } } \]
and the diagram
\[ \xymatrix@C=4em{
& N \ar@{->>}[ld]_-{p_f} \ar@{^{(}->}[dd] \ar@{^{(}->}[rd]^-{(0,\id,0)} &  \\
M &   & M' \oplus N \oplus N' \\
& \Box \ar@{->>}[lu] \ar@{^{(}->}[ru]  & 
}\]
in which the morphism $N \hookrightarrow \Box$ is induced by the pair $(\id, (0, \id, 0))$.
This shows that $h \circ f$ is equivalent to the upper span in the following commutative diagram:
\[ \xymatrix@C=4em{
& N \ar@{->>}[ld]_{p_f} \ar@{^{(}->}[d] \ar@{^{(}->}[rd]^-{(0,\id,0)} \\
M & \ar@{->>}[l] N' \oplus N \ar@{^{(}->}[r]^-{(0,\id,\id)}    & M' \oplus N \oplus N'  \\
& N' \ar@{->>}[lu]^{p_g} \ar@{^{(}->}[ru]_-{(0,0,\id)} \ar@{^{(}->}[u] 
}\]
Similarly $h \circ g$ is equivalent to the lower span and thus $h \circ f = h \circ g$ in $\widetilde{Q} \mathcal{A}$. 
\end{proof}

\begin{PROP}\label{PROPQ}
For all $1 \le k \le \infty$, the association $M \mapsto P^k(M)$ of Definition~\ref{DEFFPDEPTH} defines a functor
\[ P^k: \widetilde{Q}\mathcal{A} \rightarrow \cat{ f.d.-$K$-Vect}. \]
The functor $P^\infty$ maps all morphisms to injections (the other $P^k$ do not in general). 

For all $f: M \rightarrow M'$ in $\widetilde{Q}\mathcal{A}$ the diagram
\[ \xymatrix{ P^\infty(M) \ar@{^{(}->}[d]_{P^\infty(f)} \ar[r]^-{\eval_\int} &  \C \\
P^\infty(M') \ar[ru]_-{\eval_\int}
} \]
commutes. 
\end{PROP}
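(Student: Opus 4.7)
The plan is to define $P^k(f)$ on a representative span $f = (p_f: A \twoheadrightarrow M,\ \iota_f: A \hookrightarrow M')$ of a morphism in $\widetilde{Q}\mathcal{A}$, then verify successively that the definition is independent of the lifting choices, respects the defining relations of $P^k(M)$, is invariant under the equivalence relation of $\widetilde{Q}\mathcal{A}$, and is functorial for composition; finally I treat injectivity of $P^\infty(f)$ and commutativity of the evaluation diagram. Throughout I abuse notation and write $\iota_f$, $p_f$ also for the induced maps on $H_B$ and $H_{dR}$.

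For the definition, exactness of $H_B$ and $H_{dR}$ makes $p_f: H_B(A) \to H_B(M)$ and $\iota_f^\vee: H_{dR}(M')^\vee \to H_{dR}(A)^\vee$ surjective, so I can choose a lift $\tilde\sigma \in H_B(A)$ of $\sigma$ and a lift $\tilde\omega \in H_{dR}(M')^\vee$ of $p_f^\vee\omega$, and set
\[ P^k(f)(\sigma\otimes\omega) := [\iota_f(\tilde\sigma) \otimes \tilde\omega]_{M'} \in P^k(M'). \]
Independence of $\tilde\sigma$: two lifts differ by an element of $H_B(\ker p_f)$, whose image under $\iota_f$ sits in $H_B(K)$ for $K := \iota_f(\ker p_f) \subset M'$; the identity $\iota_f^\vee \tilde\omega = p_f^\vee\omega$ forces $\tilde\omega$ to vanish on $H_{dR}(K)$, so $\tilde\omega \in H_{dR}(M'/K)^\vee$, and the $m=1$ sequence $0 \to K \to M' \to M'/K \to 0$ kills the difference already in $P^1(M')$. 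Independence of $\tilde\omega$ is dual.

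Descent through the defining relations of $P^k(M)$ is the main diagrammatic point: starting from $0 \to N' \to M^m \to N \to 0$ with $(\sigma_i) \in H_B(N')$ and $(\omega_i) \in H_{dR}(N)^\vee$, I form the preimage $\widetilde{N'}$ of $N'$ under $p_f^m: A^m \twoheadrightarrow M^m$ and set $N'' := \iota_f^m(\widetilde{N'}) \subset (M')^m$. By construction $(\iota_f(\tilde\sigma_i)) \in H_B(N'')$, and the relation $\tilde\omega_i \circ \iota_{f,*} = \omega_i \circ p_{f,*}$ forces $(\tilde\omega_i)$ to annihilate $H_{dR}(N'')$, so $0 \to N'' \to (M')^m \to (M')^m/N'' \to 0$ is a defining relation of $P^m(M')$ killing the image. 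Invariance under the $\widetilde{Q}\mathcal{A}$-equivalence is immediate: a diagram of spans sharing $A \hookrightarrow A'$ allows any lift for $f$ to be reused for $g$. Functoriality for composition $g\circ f$ formed via the pullback of $\iota_f$ and $p_g$ follows by lifting through the pullback object at each stage.

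The crux is injectivity of $P^\infty(f)$, where the Key Lemma is essential. Suppose $x = \sum_i\sigma_i\otimes\omega_i$ maps to zero in $P^\infty(M')$. By the Key Lemma applied to $\sum_i\iota_f(\tilde\sigma_i)\otimes\tilde\omega_i$, there is an exact sequence $0 \to N' \to (M')^m \to N \to 0$ with $(\iota_f(\tilde\sigma_i)) \in H_B(N')$ and $(\tilde\omega_i) \in H_{dR}(N)^\vee$. Setting $C := p_f^m(N' \cap A^m) \subset M^m$ (with $A^m$ viewed inside $(M')^m$ via $\iota_f^m$), I verify using $\omega_i\circ p_{f,*} = \tilde\omega_i\circ\iota_{f,*}$ and the fact that $(\tilde\omega_i)$ annihilates $H_{dR}(N')$ that $(\sigma_i) \in H_B(C)$ and that $(\omega_i)$ annihilates $H_{dR}(C)$, whence $x = 0$ in $P^m(M)$ via $0 \to C \to M^m \to M^m/C \to 0$. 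Commutativity of the evaluation diagram reduces to the chain
\[ \eval_\int(\iota_f\tilde\sigma \otimes \tilde\omega) = \tilde\omega\bigl(\textstyle\int_{M'}\iota_f\tilde\sigma\bigr) = (p_f^\vee\omega)\bigl(\textstyle\int_A\tilde\sigma\bigr) = \omega\bigl(\textstyle\int_M\sigma\bigr), \]
by naturality of $\int$. The main obstacle will be the injectivity step: the Key Lemma is indispensable because it packages vanishing in $P^\infty(M')$ into a single exact sequence that can be pulled back along $p_f$ and $\iota_f$ to produce the required exact sequence for $M$.
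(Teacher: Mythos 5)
Your proposal is correct and takes essentially the same route as the paper: you build the functor by transporting tensors along the span using exactness of the fiber functors (the paper does this separately for monos and epis, you treat the whole span at once), descend the defining relations via image/preimage exact sequences, and obtain injectivity of $P^\infty(f)$ from the Key Lemma by intersecting the resulting exact sequence with the appropriate power of the middle object --- your $C = p_f^m(N'\cap A^m)$ is exactly the paper's $L'\cap (N')^n$. The only differences are presentational: your explicit lift-independence checks and the naturality computation for the evaluation triangle, which the paper leaves implicit.
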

In other words, the  filtered system of vector spaces and inclusions $M \mapsto P^\infty(M)$ mimics abstractly the system of subspaces of periods and their inclusions. 
\begin{proof}
For a subobject $N' \hookrightarrow M$ with quotient $N$ we have a morphism
\[ H_B(N') \otimes_\Q H_{dR}(N')^{\vee} \cong H_B(N') \otimes_\Q \frac{H_{dR}(M)^\vee}{H_{dR}(N)^\vee}  \hookrightarrow \frac{H_B(M) \otimes_\Q H_{dR}(M)^\vee}{H_B(N') \otimes_\Q H_{dR}(N)^\vee}  \]
and $P^k(M)$ is a quotient of the target because $k \ge 1$ by assumption.
Therefore this defines a morphism $H_B(N') \otimes_\Q H_{dR}(N')^{\vee} \rightarrow P^k(M)$.

For a quotient $M \twoheadrightarrow N$ with kernel $N'$  we have a morphism 
\[ H_B(N) \otimes_\Q H_{dR}(N)^{\vee} \cong \frac{H_B(M)}{H_B(N')} \otimes_\Q H_{dR}(N)^{\vee}  \hookrightarrow \frac{H_B(M) \otimes_\Q H_{dR}(M)^\vee}{H_B(N') \otimes_\Q H_{dR}(N)^\vee}.   \]
and $P^k(M)$ is again a quotient of the target.
Therefore this defines a morphism $H_B(N) \otimes_\Q H_{dR}(N)^{\vee} \rightarrow P^k(M)$.

We have to verify that the relations are respected. For the case of a subobject consider a relation $\sum_i \sigma_i \otimes \omega_i$ given by a sequence
\[ \xymatrix{  0 \ar[r] & K' \ar[r] & N^m  \ar[r] & K  \ar[r] & 0    } \]
with $(\sigma_i) \in H_B(K')$ and $(\omega_i) \in H_{dR}(K)^\vee$ and $m \le k$.
Consider the monomorphism of exact sequences
\[ \xymatrix{  0 \ar[r] & K' \ar@{=}[d] \ar[r] & N^m \ar@{^{(}->}[d] \ar[r] & K \ar@{^{(}->}[d] \ar[r] & 0    \\
0 \ar[r] & K' \ar[r] & M^m  \ar[r] & \lefthalfcap  \ar[r] & 0    } \]
where $\lefthalfcap$ denotes the push-out.
The relation is mapped to $\sum_i \sigma_i \otimes \widetilde{\omega}_i$, where $\widetilde{\omega}_i$ is any lift of $\omega_i$ to $M$. 
Then $(\widetilde{\omega}_i)$ is in $H_{dR}(\lefthalfcap)^\vee$ thus the new relation
is valid in $P^k(M)$.
The case of a quotient is similar. An easy verification shows that the functor is compatible with composition and that it factors through $\widetilde{Q}\mathcal{A}$. 

Injectivity: Consider first a morphism $P^\infty(N') \rightarrow P^\infty(M)$ given by a subobject $N' \hookrightarrow M$ and let $\gamma = \sum_i \sigma_i \otimes \omega_i$ 
be a tensor that is mapped to zero. Represent the image as $\sum_i \sigma_i \otimes \widetilde{\omega}_i$ with $\widetilde{\omega_i }\in H_{dR}(M)^\vee$ (lift of $\omega_i \in H_{dR}(N')^\vee$). The fact that $\gamma$ maps to zero means (using Key Lemma~\ref{LEMMAP} --- this is the only place where we must have $P^\infty$ instead of $P^k$) that there is a sequence 
\[ \xymatrix{  0 \ar[r] & L' \ar[r] & M^n  \ar[r] & L  \ar[r] & 0    } \]
with $(\sigma_i) \in H_B(L')$ and $(\widetilde{\omega}_i) \in H_{dR}(L)^\vee$. Consider the monomorphism of exact sequences
\[ \xymatrix{ 
 0 \ar[r] & L' \cap (N')^n \ar@{^{(}->}[d]  \ar[r] & (N')^n  \ar@{^{(}->}[d]   \ar[r] & L'' \ar@{^{(}->}[d]   \ar[r] & 0   \\
 0 \ar[r] & L' \ar[r] & M^n \ar[r] & L  \ar[r] & 0  
   } \]
   where $L''$ is the image of $(N')^n$ in $L$. 
We see that $\sigma_i \in H_B(L' \cap (N')^n)$ and $\omega_i \in H_{dR}(L'')^\vee$, hence $\gamma$ is already 0 in $P^\infty(N')$. 
The case of a quotient is dual. 
\end{proof}

\section{Comparison}

Let the setting be as in \ref{PARTUPLE}.
Recall the spaces of formal periods $P(\mathcal{A})$ and $P(M)$ from Definition~$\ref{DEFFORMALPERIODS}$  as well as $P^k(M)$ and $P^\infty(M)$ from Definition~\ref{DEFFPDEPTH}. 

\begin{SATZ}\label{SATZQ2}
We have 
\[  P(\mathcal{A}) = \colim_{\widetilde{Q}\mathcal{A}} P^\infty = \colim_{\widetilde{Q}\mathcal{A}} P^k  \]
for all $k \ge 1$. 
Furthermore, for each $M \in \mathcal{A}$, the space $P^\infty(M)$ (but not necessarily the $P^k(M)$) {\em injects} into $P(\mathcal{A})$ and thus
\[ P^\infty(M) \cong P(M).   \]
\end{SATZ}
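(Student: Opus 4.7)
The plan is to construct mutually inverse $K$-linear isomorphisms
\[ \Psi_k \colon \colim_{\widetilde{Q}\mathcal{A}} P^k \to P(\mathcal{A}) \quad\text{and}\quad \Phi_k \colon P(\mathcal{A}) \to \colim_{\widetilde{Q}\mathcal{A}} P^k \]
for every $k \ge 1$, both given on generators by $(\sigma \otimes \omega)_M \leftrightarrow [\sigma \otimes \omega]_M$. Granted these, the injection $P^\infty(M) \hookrightarrow P(\mathcal{A})$ is almost a formality: by Proposition~\ref{PROPQ} all transition maps of $P^\infty$ are injections, and by Lemma~\ref{LEMMAQ} the indexing category $\widetilde{Q}\mathcal{A}$ is filtered, so the canonical map $P^\infty(M) \to \colim P^\infty \cong P(\mathcal{A})$ is injective. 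Its image is, by definition, $P(M)$, giving $P^\infty(M) \cong P(M)$.

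The main technical step is to verify that $\Psi_k$ is well-defined, i.e., that every defining relation of $P^k(M)$ maps to zero in $P(\mathcal{A})$ (compatibility with the morphisms of $\widetilde{Q}\mathcal{A}$ then follows routinely from the explicit descriptions of the transition maps given in the proof of Proposition~\ref{PROPQ}, using (\ref{eqprincipalrel}) once). So let $0 \to N' \to M^m \to N \to 0$ be exact with $\sigma' := (\sigma_i) \in H_B(N')$ and $\omega' := (\omega_i) \in H_{dR}(N)^\vee$, and write $\iota \colon N' \hookrightarrow M^m$ and $p \colon M^m \twoheadrightarrow N$ for the two structure maps. Applying (\ref{eqprincipalrel}) to $\iota$ with the $\omega$-slot $p^\vee(\omega') \in H_{dR}(M^m)^\vee$ yields
\[ (\iota(\sigma') \otimes p^\vee(\omega'))_{M^m} = (\sigma' \otimes \iota^\vee p^\vee(\omega'))_{N'} = 0, \]
the last equality because $p\iota = 0$. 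Denoting by $\iota_i \colon M \hookrightarrow M^m$ and $\pi_j \colon M^m \twoheadrightarrow M$ the standard inclusions and projections, one has $\iota(\sigma') = \sum_i \iota_i(\sigma_i)$ and $p^\vee(\omega') = \sum_j \pi_j^\vee(\omega_j)$; bilinear expansion produces $m^2$ summands. The off-diagonal terms $(\iota_i(\sigma_i) \otimes \pi_j^\vee(\omega_j))_{M^m}$ with $i \ne j$ vanish in $P(\mathcal{A})$ by another application of (\ref{eqprincipalrel}) to $\iota_i$, since $\iota_i^\vee \pi_j^\vee = (\pi_j \iota_i)^\vee = 0$; the diagonal terms collapse by Lemma~\ref{LEMMASUM} to $\sum_i (\sigma_i \otimes \omega_i)_M$, which is therefore also zero in $P(\mathcal{A})$, as required.

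For $\Phi_k$, the only nontrivial check is that $(\sigma \otimes \alpha^\vee(\omega))_M$ and $(\alpha(\sigma) \otimes \omega)_N$ become equal in $\colim P^k$ for each morphism $\alpha \colon M \to N$ of $\mathcal{A}$. I factor $\alpha = \iota \circ q$ through $A := \im(\alpha)$; the quotient $q$ and the inclusion $\iota$ give morphisms $A \to M$ and $A \to N$ in $\widetilde{Q}\mathcal{A}$. Unwinding Proposition~\ref{PROPQ}, the element $q(\sigma) \otimes \iota^\vee(\omega) \in P^k(A)$ maps to $\sigma \otimes \alpha^\vee(\omega) \in P^k(M)$ along the first morphism and to $\alpha(\sigma) \otimes \omega \in P^k(N)$ along the second, so all three represent the same class in $\colim P^k$. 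That $\Phi_k$ and $\Psi_k$ are mutually inverse is immediate on generators. The principal obstacle is the cross-term computation of the previous paragraph: the relations defining $P^k(M)$ live naturally in $M^m$, and translating them back to a vanishing statement on $M$ requires the separate elimination of the $m(m-1)$ off-diagonal terms via (\ref{eqprincipalrel}).
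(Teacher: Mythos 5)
Your proposal is correct and follows essentially the same route as the paper: both directions are handled by matching the relations of $P(\mathcal{A})$ and of $\colim_{\widetilde{Q}\mathcal{A}} P^k$ (factoring $\alpha$ through its image for one direction, and using the exact-sequence relation $(\iota(\sigma)\otimes p^\vee(\omega))_{M^m}=0$ together with the splitting into componentwise terms for the other), and the injectivity of $P^\infty(M)\to P(\mathcal{A})$ is deduced exactly as in the paper from Lemma~\ref{LEMMAQ} and Proposition~\ref{PROPQ}. Your explicit elimination of the $m(m-1)$ off-diagonal terms via (\ref{eqprincipalrel}) just spells out a detail the paper subsumes under Lemma~\ref{LEMMASUM}.
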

\begin{proof}
All three $K$-vector spaces are in a natural way quotients of $\bigoplus_M H_B(M) \otimes_\Q H_{dR}(M)^\vee$. We have to see that the relations correspond. 
From left to right, consider a morphism $\alpha: N \rightarrow M$ in $\mathcal{A}$ and
factor
\[ \xymatrix{ N \ar@{->>}[r]^-p & \alpha(N)  \ar@{^{(}->}[r]^-\iota & M } \]
A tensor $\sigma \otimes \omega \in H_B(N) \otimes_\Q H_{dR}(M)^\vee$ gives an element $p(\sigma) \otimes \iota^\vee(\omega) \in H_B(\alpha(N)) \otimes_\Q H_{dR}(\alpha(N))^\vee$. 
The morphism in $\widetilde{Q} \mathcal{A}$ corresponding to $p$ maps $(p(\sigma) \otimes \iota^\vee(\omega))_{\alpha(N)}$ to $(\sigma \otimes \alpha^{\vee}(\omega))_N$ and the map
in $\widetilde{Q} \mathcal{A}$ corresponding to $\iota$ (going in the other direction) maps $(p(\sigma) \otimes \iota^\vee(\omega))_{\alpha(N)}$ to $(\alpha(\sigma) \otimes \omega)_M$.
We thus get the same relation in $\colim_{\widetilde{Q}\mathcal{A}} P^\infty$ and also in $\colim_{\widetilde{Q}\mathcal{A}} P^k$ for $k \ge 1$.

In the other direction, for each morphism $f: M \rightarrow M'$ in $Q\mathcal{A}$  we get an equality $(\sigma\otimes \omega)_M = P^k(f)(\sigma \otimes \omega)_{M'}$ in the colimit. 
We have to find a corresponding relation on the left. It suffices to see this for morphisms in $Q\mathcal{A}$ corresponding to mono- or epimorphisms. For those, 
the relation (\ref{eqprincipalrel}) for this particular mono- or epimorphism does the job. For each relation $\sum_i \sigma_i \otimes \omega_i = 0$ in $P^\infty(M)$ corresponding to an exact sequence
\[ \xymatrix{ 0 \ar[r] & N'  \ar[r]^\iota  & M^n \ar[r]^p & N \ar[r] & 0 } \]
and $(\sigma_i) \in H_B(N')$ and $(\omega_i) \in H_{dR}(N)^\vee$,
we have in $P(\mathcal{A})$ the relation
\[  (\iota(\sigma) \otimes p^\vee(\omega))_{M^n} =  (p \iota(\sigma) \otimes \omega)_{N} =0. \]
Hence Lemma~\ref{LEMMASUM} shows $\sum_i (\sigma_i \otimes \omega_i)_M = 0$ in $P(\mathcal{A})$. 

The canonical morphism $P^\infty(M) \rightarrow \colim_{\widetilde{Q}\mathcal{A}} P^\infty$ is injective because $\widetilde{Q}\mathcal{A}$ is filtered (see Lemma~\ref{LEMMAQ}) and all morphisms are mapped to injections by Proposition~\ref{PROPQ} (which holds only for $P^\infty$, and not for the $P^k$). 
\end{proof}

\begin{KOR}
\label{KORAB}
The period conjecture for $(\mathcal{A}, H_{B}, H_{dR}, \int)$ is equivalent to the following implication: 
If there are $M \in\mathcal{A}$, $\sigma \in H_B(M)$, and  $\omega \in H_{dR}(M)^\vee$ such that 
\[  \int_\sigma \omega  = 0 \]
then there is an exact sequence
 \[ \xymatrix{ 0 \ar[r] & N' \ar[r] & M \ar[r] & N \ar[r]  & 0 }  \]
 with $\sigma \in H_B(N')$ and $\omega \in H_{dR}(N)^\vee$.
\end{KOR}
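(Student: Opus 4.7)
The plan is to bridge $P(\mathcal{A})$ and the single-object spaces $P^\infty(M)$ by combining Theorem~\ref{SATZQ2} (which supplies the injection $P^\infty(M) \hookrightarrow P(\mathcal{A})$) with the Key Lemma~\ref{LEMMAP} (which promotes any relation in $P^\infty(M)$ to a single explicit exact sequence). One direction of the equivalence is then immediate, and the other reduces to these two tools after one bookkeeping step via Lemma~\ref{LEMMASUM}.

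For the direction ``period conjecture $\Rightarrow$ implication'': given $\sigma \in H_B(M)$ and $\omega \in H_{dR}(M)^\vee$ with $\int_\sigma \omega = 0$, the injectivity of $\eval_\int$ forces $(\sigma \otimes \omega)_M = 0$ in $P(\mathcal{A})$. Because $P^\infty(M) \hookrightarrow P(\mathcal{A})$ by Theorem~\ref{SATZQ2}, this equality already holds in $P^\infty(M)$, and the Key Lemma applied with $m=1$ produces exactly the desired exact sequence $0 \to N' \to M \to N \to 0$ with $\sigma \in H_B(N')$ and $\omega \in H_{dR}(N)^\vee$.

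For the reverse direction, let $\gamma \in P(\mathcal{A})$ lie in the kernel of $\eval_\int$. First I apply Lemma~\ref{LEMMASUM} to collapse $\gamma$ onto a single object: setting $M' := \bigoplus_i M_i$, we may write $\gamma = \sum_{j=1}^m (\sigma_j \otimes \omega_j)_{M'}$. Next I repackage the $m$ terms as a single tensor on $(M')^m$ by putting $\sigma := (\sigma_1,\ldots,\sigma_m) \in H_B((M')^m)$ and $\omega := (\omega_1,\ldots,\omega_m) \in H_{dR}((M')^m)^\vee$; naturality of $\int$ then gives $\int_\sigma \omega = \sum_j \int_{\sigma_j} \omega_j = \eval_\int(\gamma) = 0$. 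Applying the hypothesis to the object $(M')^m$ yields an exact sequence $0 \to N' \to (M')^m \to N \to 0$ with $\sigma \in H_B(N')$ and $\omega \in H_{dR}(N)^\vee$, which is precisely a defining relation of $P^\infty(M')$. Hence $\sum_j \sigma_j \otimes \omega_j = 0$ in $P^\infty(M')$, and Theorem~\ref{SATZQ2} yields $\gamma = 0$ in $P(\mathcal{A})$.

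I do not expect a serious obstacle, since the real content is already packaged into Theorem~\ref{SATZQ2} and the Key Lemma. The only non-formal point is the ``package $m$ simple tensors into one'' maneuver in the reverse direction: without it, the hypothesis (which speaks only of a single $\sigma \otimes \omega$ on a single $M$) would be too weak to reach a general element of $P(\mathcal{A})$, but the passage to $(M')^m$ converts an $m$-term sum into a single simple tensor of exactly the form the hypothesis expects.
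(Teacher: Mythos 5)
Your proof is correct and follows essentially the same route as the paper: the hard direction uses $P(M)=P^\infty(M)$ from Theorem~\ref{SATZQ2} plus the Key Lemma~\ref{LEMMAP} with $m=1$, and the easy direction applies the hypothesis to a direct sum packaging the whole relation. Your detour through $(M')^m$ and the defining relations of $P^\infty(M')$ is only a cosmetic variant of the paper's direct application of the hypothesis to $\bigoplus_i M_i$ together with Lemma~\ref{LEMMASUM} and relation~(\ref{eqprincipalrel}).
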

\begin{proof}
Let 
\[ \sum \int_{\sigma_i} \omega_i = 0 \]
 with $\sigma_i \in H_B(M_i)$ and $\omega_i \in H_{dR}(M_i)^\vee$ be a period relation. This might be seen as 
$\int_\sigma \omega = 0$
for $\sigma:= \bigoplus_i \sigma_i$ and $\omega:= \bigoplus_i \omega_i$ on $M = \bigoplus M_i$. If the implication holds, then
there is an exact sequence
 \[ \xymatrix{ 0 \ar[r] & N' \ar[r]^s & M \ar[r]^p & N \ar[r]  & 0 }  \]
 such that $\sigma \in H_B(N')$ and $\omega \in H_{dR}(N)^\vee$. This implies using Lemma~\ref{LEMMASUM}
 \[ 0 = (ps(\sigma) \otimes  \omega)_N = (s(\sigma) \otimes  p^\vee(\omega))_M = \sum_i (\sigma_i \otimes \omega_i)_{M_i}  \]
 in $P(\mathcal{A})$. This shows that $\eval_\int$ is injective. 

Now let $M \in \mathcal{A}$, $\sigma \in H_B(M)$ and $\omega \in H_{dR}(M)^\vee$ be such that $\int_{\sigma} \omega = 0$. If the period conjecture holds, the formal period $(\sigma \otimes \omega)_M$ is zero. Therefore, since $P(M)=P^{\infty}(M)$ by Theorem~\ref{SATZQ2}, we have
 an exact sequence of the required form by the Key Lemma~\ref{LEMMAP}. Hence the implication holds. 
\end{proof}

\begin{BEM}
The above reformulation of the period conjecture is motivated by the Theorem of Huber and W\"ustholz
\cite[Theorem 3.6]{HW18}. In \cite{HW18} the period conjecture for 1-motives (cf.\@ Theorem~\ref{SATZHW} in the Introduction) is deduced from the statement in the reformulation --- the easy direction of the above Corollary.  One of the motivations of this article was to investigate whether the statement is actually always equivalent to the period conjecture. 
\end{BEM}

\section{Basic properties of formal periods}

Let the setting be as in \ref{PARTUPLE}.
The following Proposition subsumes some elementary properties of formal periods in the Abelian setting. 

\begin{PROP}\label{PROPBASICPROP}
\begin{enumerate}
\item If $N \subset M$ is a subobject or if $M \twoheadrightarrow N$ is a quotient then we have

$P(M \oplus N) = P(M).$
\item 
$P(M^n) = P(M)$ for every $n \ge 1$.
\item If $M_0 \in \mathcal{A}_0$ and $M_1 \in \mathcal{A}_1$ lie in Abelian subcategories of $\mathcal{A}$ such that $\Hom(\mathcal{A}_0, \mathcal{A}_1) = \Hom(\mathcal{A}_1, \mathcal{A}_0) = 0$
then

$P(M_0 \oplus M_1) = P(M_0) \oplus P(M_1).$
\item For two $\Q$-vector spaces $X \not= 0$ and $L$ and an exact sequence
\[ \xymatrix{ 0 \ar[r] & M_0 \otimes_\Q X \ar[r] &  M \ar[r] &  M_1 \otimes_\Q L \ar[r] & 0  }\]
we have
\[ P(M) = P(\widetilde{M}) \]
where 
\[ \xymatrix{ 0 \ar[r] & M_0 \ar[r] &  \widetilde{M} \ar[r] &  M_1 \otimes_\Q \Hom(X, L) \ar[r] & 0  }\]
is constructed as the pushout of $M \otimes_\Q X^\vee$ along $\tr: \Hom(X, X) \rightarrow \Q$.
\end{enumerate}
\end{PROP}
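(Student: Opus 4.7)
The plan is to identify each of the spaces $P(M)$, $P(M \oplus N)$, etc., as subspaces of the common ambient $P(\mathcal{A})$, and to reason there using Theorem~\ref{SATZQ2} ($P(M) = P^\infty(M)$), the Key Lemma~\ref{LEMMAP} describing relations, Proposition~\ref{PROPQ} on $\widetilde{Q}\mathcal{A}$-functoriality, and Lemma~\ref{LEMMASUM} for combining formal periods across direct sums. The repeated technical input is an immediate consequence of relation~(\ref{eqprincipalrel}) applied to a summand inclusion: any ``cross-type'' tensor $(\sigma, 0) \otimes (0, \omega')$ vanishes in $P(\mathcal{A})$, because pulling back $(0, \omega')$ to the first summand gives zero.

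For part~(1), say $N \hookrightarrow M$ (the quotient case is dual). The first-factor inclusion $M \hookrightarrow M \oplus N$ gives $P(M) \subseteq P(M \oplus N)$ directly from relation~(\ref{eqprincipalrel}). Conversely, every generator $(\sigma \otimes \omega)_{M \oplus N}$ decomposes via $H_B(M \oplus N) = H_B(M) \oplus H_B(N)$ (and its dual) into four bi-type tensors: the two cross-types vanish, the $(M,M)$-type descends to $P(M)$ along the first-factor inclusion, and the $(N,N)$-type descends to $P(M)$ by lifting $\omega_N$ to $\widetilde{\omega} \in H_{dR}(M)^\vee$ (via the surjection $H_{dR}(M)^\vee \twoheadrightarrow H_{dR}(N)^\vee$ coming from exactness of $H_{dR}^\vee$) and applying the relation for $N \hookrightarrow M$. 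Part~(2) then follows by iterating part~(1) with $N = M$.

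For part~(3), the same cross-term analysis immediately yields $P(M_0 \oplus M_1) = P(M_0) + P(M_1)$ inside $P(\mathcal{A})$; the subtle point is directness. Given $\gamma_0 + \gamma_1 = 0$ in $P(M_0 \oplus M_1)$ with $\gamma_j \in P(M_j)$, realize this as a relation in $P^\infty(M_0 \oplus M_1)$ and invoke the Key Lemma to get a witnessing exact sequence $0 \to N' \to (M_0 \oplus M_1)^m \to N \to 0$. The hypothesis $\Hom(\mathcal{A}_0, \mathcal{A}_1) = \Hom(\mathcal{A}_1, \mathcal{A}_0) = 0$ forces $N' = N'_0 \oplus N'_1$ with $N'_j \subseteq M_j^m$: the induced map $N' / (N' \cap M_0^m) \to M_0^m / (N' \cap M_0^m)$, obtained from $N' \hookrightarrow M_0^m \oplus M_1^m$ followed by the first projection, is a morphism from an object of $\mathcal{A}_1$ into one of $\mathcal{A}_0$, hence zero, so the image of $N'$ under projection to $M_0^m$ coincides with $N' \cap M_0^m$ and the extension splits. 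The same argument applies to $N$, and the relation decouples into independent relations in $P^\infty(M_0)$ and $P^\infty(M_1)$, forcing $\gamma_0 = \gamma_1 = 0$.

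For part~(4) the plan is a two-step reduction, and this is where the main obstacle lies. First, $P(M) = P(M \otimes_\Q X^\vee)$ by part~(2), since $M \otimes_\Q X^\vee \cong M^{\dim X}$ as an object of $\mathcal{A}$. Second, because $\tr\colon X \otimes X^\vee \twoheadrightarrow \Q$ is surjective, the pushout construction yields a canonical surjection $q\colon M \otimes X^\vee \twoheadrightarrow \widetilde{M}$ whose kernel is $M_0 \otimes \ker(\tr) \cong M_0^{(\dim X)^2 - 1}$, a sum of copies of $M_0$. Proposition~\ref{PROPQ} applied to $q$ already provides an injection $P(\widetilde{M}) \hookrightarrow P(M \otimes X^\vee) = P(M)$, so what remains is to prove surjectivity. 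The key observation is that $M_0$ is itself a subobject of $\widetilde{M}$ (from the pushout), so iterating part~(1) gives $P(\widetilde{M}) = P(\widetilde{M} \oplus M_0^k)$ for all $k$, making the copies of $M_0$ in $\ker(q)$ ``redundant'' from the point of view of formal periods. Concretely, fixing a $\Q$-splitting $X \otimes X^\vee = \Q \cdot \id \oplus \ker(\tr)$ decomposes every generator of $P(M \otimes X^\vee)$ into a ``trace part'' matching a generator of $P(\widetilde{M})$ and a ``traceless part'' supported on $M_0 \otimes \ker(\tr)$, which is absorbed using the relation from the exact sequence defining $q$ together with the cross-term vanishing from part~(1). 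Carrying out this absorption cleanly — probably by exhibiting intermediate morphisms in $\widetilde{Q}\mathcal{A}$ that realize the identification of tensors on both sides — is the main technical difficulty.
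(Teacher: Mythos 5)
Your treatments of parts (1)--(3) are correct and essentially what the paper intends: it leaves (1)--(2) to the reader and deduces (3) from the definition of $P^\infty$ together with Theorem~\ref{SATZQ2}; your extra appeal to Key Lemma~\ref{LEMMAP} to get directness of the sum in (3) is a legitimate way of spelling that out, and your splitting of the witnessing sequence using the $\Hom$-vanishing hypotheses is sound. The gap is in part (4), precisely at the inclusion $P(M)\subseteq P(\widetilde{M})$ that you defer to an ``absorption''; the easy direction $P(\widetilde{M})\subseteq P(M\otimes_\Q X^\vee)=P(M)$ via the epimorphism $q$ (or Proposition~\ref{PROPQ}) is fine.

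Concretely: after choosing linear splittings adapted to $0\to M_0\otimes_\Q\ker(\tr)\to M\otimes_\Q X^\vee\stackrel{q}{\to}\widetilde{M}\to 0$, a generator $\sigma\otimes\omega$ of $P(M\otimes_\Q X^\vee)$ breaks into pieces, and your tools handle all but one of them: any piece whose functional vanishes on $H_{dR}(\ker q)$ pushes forward along $q$ into $P(\widetilde{M})$, and any piece with $\sigma\in H_B(\ker q)$ restricts to $P(M_0)\subseteq P(\widetilde{M})$. The remaining piece --- $\sigma$ lifting a nonzero class of $H_B(\widetilde{M})$ while $\omega$ is nonzero on $H_{dR}(\ker q)$ --- is exactly the block carrying the extension periods, and neither the relation coming from the exact sequence for $q$ (which kills the opposite off-diagonal block, $\sigma\in H_B(\ker q)$ with $\omega$ vanishing there) nor the cross-term vanishing of part (1) (which requires a split direct sum; $M\otimes_\Q X^\vee$ is not $\widetilde{M}\oplus M_0^{d^2-1}$ in general) touches it. In fact no argument using only ``$q$ is an epimorphism whose kernel is a power of $M_0$, and $M_0\subset\widetilde{M}$'' can work: for a non-split extension $E$ of $M_1$ by $M_0$, the epimorphism $M_0\oplus E\twoheadrightarrow M_0\oplus M_1$ has exactly these properties, yet $P(M_0\oplus E)\not\subseteq P(M_0\oplus M_1)$ in general (take the Kummer $1$-motive $E=[\Z\xrightarrow{2}\Gm]$: by Baker $\log 2\notin\Qbar+\Qbar\,2\pi i$, so by Theorem~\ref{SATZHW} the corresponding formal period is not in $P(\Q)\oplus P(\Q(1))$). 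The missing ingredient is the step the paper's proof begins with: $M$ is reconstructed as the pullback of $\widetilde{M}\otimes_\Q X\to M_1\otimes_\Q\Hom(X,L)\otimes_\Q X$ along $\frac{1}{d}\id_X\otimes-$, which produces a monomorphism $\iota_2\colon M\hookrightarrow\widetilde{M}\otimes_\Q X\cong\widetilde{M}^{\dim X}$. Once you have $\iota_2$, your own parts (1) and (2) already give $P(M)\subseteq P(\widetilde{M}^{\dim X}\oplus M)=P(\widetilde{M}^{\dim X})=P(\widetilde{M})$, which together with your $q$-direction closes part (4); the paper instead packages $\iota_2$ and $p_1$ into commutative diagrams in $\widetilde{Q}\mathcal{A}$ and applies the 2-out-of-6 property, but the pullback reconstruction is the indispensable input either way, and it is absent from your proposal.
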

\begin{proof}
1.\@ is similar to Lemma~\ref{LEMMASUM} and left to the reader.

2.\@ follows by induction from 1.\@ (or from Lemma~\ref{LEMMASUM}).

3.\@ follows directly from the definition of $P^\infty(M)$ and Theorem~\ref{SATZQ2}. 

4.\@ We can reconstruct the given sequence as the pull-back of 
\[ \xymatrix{ 0 \ar[r] & M_0 \otimes_\Q X \ar[r] &  \widetilde{M} \otimes_\Q X \ar[r] &  M_1 \otimes_\Q \Hom(X, L) \otimes_\Q X \ar[r] & 0  }\]
along 
\[ \frac{1}{d}\id_X \otimes -: L \mapsto \End(X) \otimes_\Q L = \Hom(X, L) \otimes_\Q X \]
where $d= \dim(X)$.

We have an injection $\iota_2: M \hookrightarrow \widetilde{M} \otimes_\Q X$ and a projection
$p_1: M \otimes_{\Q} X^\vee \rightarrow \widetilde{M}$.
Furthermore we have an obvious injection $\frac{1}{d}\id: M \rightarrow M \otimes_\Q X^\vee \otimes_\Q X$ and projection $\tr: \widetilde{M} \otimes_\Q X \otimes_\Q X^\vee \rightarrow \widetilde{M}$. 
This gives the following two diagrams in $\widetilde{Q} \mathcal{A}$:
\[ \xymatrix{ M \ar[r]^{\iota_2} \ar[rd]_-{\tr'}  & \widetilde{M} \otimes_\Q X \ar[d]^{(p_1 \otimes X)'} & \ar[l] \widetilde{M} \ar[d]^{p_1'}    \\
&  M \otimes_\Q X^\vee \otimes_\Q X  & \ar[l] M \otimes_\Q X^\vee   } \quad
\xymatrix{ \widetilde{M} \ar[r]^{p_1'} \ar[rd]_-{\frac{1}{d}\id}  & M \otimes_\Q X^\vee \ar[d]^{\iota_2 \otimes X^\vee} & \ar[l]  M \ar[d]^{\iota_2}     \\
&  \widetilde{M} \otimes_\Q X \otimes_\Q X^\vee   & \ar[l] \widetilde{M} \otimes_\Q X  } \]
where the unnamed morphisms are induced by any compatible choice of projection (resp.\@ injection).
Assume for the moment that the above diagrams are commutative in $\widetilde{Q} \mathcal{A}$. 
$P$ maps $\tr'$ and $\frac{1}{d}\id$ and also all unnamed horizontal morphisms to isomorphisms by 2. By the 2-out-of-6 property of isomorphisms therefore $P$ maps $\iota_2$ and $p_1'$ to isomorphisms. 

It remains to verify the commutativity of the diagrams.
The composition $(\iota_2 \otimes X^\vee) \circ p_1'$ is equal to $\tr'$ by means of the following equivalence of morphisms in $\widetilde{Q} \mathcal{A}$:
\[ \xymatrix{  & M \otimes_\Q X^\vee  \ar@{->>}[ld]_{p_1} \ar@{^{(}->}[rd]^{\iota_2 \otimes X^\vee} \ar@{^{(}->}[dd]^{\iota_2 \otimes X^\vee} \\
\widetilde{M} & &  \widetilde{M} \otimes_\Q X \otimes_\Q X^\vee   \\
&  \widetilde{M} \otimes_\Q X \otimes_\Q X^\vee \ar@{=}[ru] \ar@{->>}[lu]^-{\tr}  }\]
The composition $(p_1 \otimes X^\vee)' \circ \iota_2$ is given by
\[ \xymatrix@C=4em@R=3em{   & & \Box \ar@{->>}[ld] \ar@{^{(}->}[rd] \\
& M   \ar@{=}[ld] \ar@{^{(}->}[rd]^{\iota_2}  && M \otimes_\Q X^\vee \otimes_\Q X \ar@{->>}[ld]_{p_1 \otimes X} \ar@{=}[rd]  \\
M & &  \widetilde{M} \otimes_\Q X  & & M \otimes_\Q X \otimes_\Q X^\vee  } \]
where $\Box$ denotes the fiber product and there is an equivalence of morphisms in  $\widetilde{Q} \mathcal{A}$:
\[ \xymatrix@C=6em{  & \Box  \ar@{->>}[ld] \ar@{^{(}->}[rd] \ar@{<-^{)}}[dd]_{(\id, \frac{1}{d}\id)} \\
M & &  M \otimes_\Q X \otimes_\Q X^\vee   \\
&  M \ar@{^{(}->}[ru]_{\frac{1}{d}\id} \ar@{=}[lu]  }\]

\end{proof}

\section{The principal case}

Note that $\End(M)$ acts on $H_B(M)$ from the left and on $H_{dR}(M)^\vee$ from the right. 
There is always a surjection 
\[  H_B(M) \otimes_{\End(M)^{\op}} H_{dR}(M)^\vee \twoheadrightarrow P(M). \]
 In this section we investigate under which circumstances this is even an isomorphism. 
Indeed this will be the case for {\em principal} objects (cf.\@ Proposition~\ref{PROPWP}):

\begin{DEF}\label{DEFWP}
An object $M$ in an Abelian category is called {\bf left principal} if for every $m \ge 0$ and every subobject $N \subset M^m$ there is a morphism
\[ \xymatrix{ M^k \ar[r] & M^m    } \]
for some $k \ge 0$ with image $N$. 
There is an obvious dual notion of {\bf right principal}. 

We call $M$ {\bf principal} if the following equivalent conditions hold:
\begin{enumerate} 
\item[(P1 left)]  For each subobject $N' \subset M^m$ there is a diagram
\begin{equation}\label{diawe1} \vcenter{ \xymatrix{ 
\ker(\beta) \ar@{^{(}->}[d] \ar[rd]^{\alpha'} & \\ 
M^k \ar@{->}[d]^{\beta} \ar[r]^{\alpha} & M^m  \\
M^l } } \end{equation}
such that $N'=\im(\alpha')$.
\item[(P2 left)] Every subobject $N' \subset M^n$ lies in the smallest class $\mathcal{C}$ of subobjects $N' \subset M^m$ (for varying $m$) such that 
\begin{enumerate}
\item $(0 \subset M^m) \in  \mathcal{C}$ and $(M^m \subset M^m) \in  \mathcal{C}$ for all $m$;
\item If $(N' \subset M^m) \in \mathcal{C}$ and $\varphi: M^m \rightarrow M^k$ is a morphism then $(\varphi(N') \subset M^k) \in \mathcal{C}$;
\item If $(N' \subset M^k) \in \mathcal{C}$ and $\varphi: M^m \rightarrow M^k$ is a morphism then $(\varphi^{-1}(N') \subset M^m) \in \mathcal{C}$.
\end{enumerate} 
\item[(P1 right)] For every quotient $M^m \twoheadrightarrow N$ there is a diagram
\begin{equation}\label{diawe2}  \vcenter{ \xymatrix{ 
\coker(\beta) \ar@{<<-}[d]  \ar@{<-}[rd]^{\alpha'} & \\ 
M^k \ar@{<-}[d]^{\beta} \ar@{<-}[r]^{\alpha} & M^m  \\
M^l } } \end{equation}
with $N=\mathrm{coim}(\alpha')$
\item[(P2 right)] Every quotient $M^n \twoheadrightarrow N$ lies in the smallest class $\mathcal{C}$ of quotients $M^m \twoheadrightarrow N$ (for varying $m$) such that 
\begin{enumerate}
\item $(M^m \twoheadrightarrow 0) \in  \mathcal{C}$ and $(M^m \twoheadrightarrow M^m) \in  \mathcal{C}$ for all $m$;
\item If $(M^k \twoheadrightarrow N) \in \mathcal{C}$ and $\varphi: M^m \rightarrow M^k$ is a morphism then $(M^m \twoheadrightarrow \varphi^{-1}(N)) \in \mathcal{C}$;
\item If $(M^m \twoheadrightarrow N) \in \mathcal{C}$ and $\varphi: M^m \rightarrow M^k$ is a morphism then $(M^k \twoheadrightarrow \varphi(N)) \in \mathcal{C}$.
\end{enumerate} 
\end{enumerate}
\end{DEF}

In Axiom (P2 right) $M^k \twoheadrightarrow \varphi(N)$ denotes the pushout
\[ \xymatrix{  M^m \ar[r]^\varphi \ar@{->>}[d] &  M^k \ar@{->>}[d]  \\
N \ar[r] &  \varphi(N) } \]
and $M^m \twoheadrightarrow \varphi^{-1}(N)$ is induced by the factorization into epimorphism followed by monomorphism:
\[ \xymatrix{  M^m \ar[r]^\varphi \ar@{->>}[d] &  M^k \ar@{->>}[d]  \\
\varphi^{-1}(N)  \ar@{^{(}->}[r] &  N } \]
These are dual to the corresponding notions in (P2 left) with a less common notation. 

\begin{proof}[Proof of the equivalences in the definition.]
(P1 left) $\Rightarrow$ (P2 left) is clear. 

(P2 left) $\Rightarrow$ (P1 right): 
The assertion $(N' \subset M^n)  \subset \mathcal{C}$ means that there is a commutative diagram (for $n$ even, say) in which the diagonal sequences are exact:
\[ 
\xymatrix@!@C=0.5em@R=1.0em{   &&&&& 0 \ar[ld] &&&& 0 \ar[ld]  \\
0\ar[rd] &&  && N''''  \ar[ld]  \ar[rd] &&&& N''   \ar[ld]  \ar[rd]  \\
 & M^{\alpha_{n}} \ar@{=}[rd] & &   \cdots  \ar[ll]  \ar[ld] \ar@{-}[r]    \ar[ld]  \ar@{-}[l] \ar[rr] & & M^{\alpha_2} \ar[rd] & & M^{\alpha_1} \ar[rr] \ar[ll]  \ar[ld] & & M^{n} \ar[rd] &&& \\
&&M^{\alpha_n} \ar[rd] &&&&N''' \ar[rd]&&&&N \ar[rd]& \\
&&& 0 &&&&0&&&&0
  }\]
  with $N' = \im(N'' \rightarrow M^n)$. 
Then consider the diagram
\begin{equation*} \xymatrix{ 
\coker \ar@{<<-}[d]  \ar@{<-}[rrd]^{\gamma} & \\ 
(M^n \oplus M^{\alpha_2} \oplus  M^{\alpha_4} \oplus \cdots \oplus M^{\alpha_n} )  \ar@{<-}[d] \ar@{<-}[rr]_-{(\id, 0,  \dots)} && M^n  \\
( M^{\alpha_1} \oplus  M^{\alpha_3} \oplus  \cdots \oplus M^{\alpha_{n-1}}   ) } 
\end{equation*}
One verifies that $N' = \ker(\gamma)$ or, equivalently, that $N = \mathrm{coim}(\gamma)$. 
The other implications are dual. 
\end{proof}

\begin{BEISPIEL}\label{EX1}
\begin{enumerate}
\item Any left or right principal object is principal. 
\item Every semi-simple object is left and right principal. 
\item  If $M$ is a (co)generator in the sense that for {\em any} object there is a surjection $M^k \twoheadrightarrow N$ (resp.\@ injection $N \hookrightarrow M^k$)
then $M$ is  left (resp.\@ right) principal.
\end{enumerate}
\end{BEISPIEL}
In Appendix~\ref{APDX} a machinery for proving principality  for
non semi-simple objects by induction is developed.  In Section~\ref{SECT1MOT} we discuss examples in the category of 1-motives.

\begin{LEMMA}
Let $M$ be an object of an Abelian category.
The class $\mathcal{C}$ of subobjects of  $M^m$ (for varying $m$) as in (P2 left) is closed under intersection and
sum. 
\end{LEMMA}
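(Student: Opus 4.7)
The plan is to split the proof into two stages. First I will establish the auxiliary closure property that if $(N_1 \subset M^m) \in \mathcal{C}$ and $(N_2 \subset M^n) \in \mathcal{C}$, then the \emph{external} direct sum $(N_1 \oplus N_2 \subset M^{m+n})$ also lies in $\mathcal{C}$. Second, the closure of $\mathcal{C}$ under intersection and sum within a common ambient $M^m$ will follow cheaply by applying axioms (c) and (b) of (P2 left) to the diagonal $\Delta : M^m \to M^{2m}$ and to the addition map $\sigma : M^{2m} \to M^m$, $(a,b) \mapsto a+b$.

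For the first stage, I would fix $(N_2 \subset M^n) \in \mathcal{C}$ and consider the auxiliary class
\[ \mathcal{C}_{N_2} := \{\,(N \subset M^m) \mid (N \oplus N_2 \subset M^{m+n}) \in \mathcal{C}\,\}. \]
The idea is to verify that $\mathcal{C}_{N_2}$ itself satisfies the three defining properties (a)--(c); minimality of $\mathcal{C}$ then forces $\mathcal{C} \subset \mathcal{C}_{N_2}$, which is exactly the desired statement. Axiom (a) for $\mathcal{C}_{N_2}$ reduces to showing that $0 \oplus N_2$ and $M^m \oplus N_2$ both lie in $\mathcal{C}$: the former equals $\iota(N_2)$ for the canonical inclusion $\iota : M^n \hookrightarrow M^{m+n}$, which lies in $\mathcal{C}$ by (b); the latter equals $p^{-1}(N_2)$ for the projection $p : M^{m+n} \twoheadrightarrow M^n$, which lies in $\mathcal{C}$ by (c). Axioms (b) and (c) for $\mathcal{C}_{N_2}$ follow by extending any $\varphi : M^m \to M^k$ to $\varphi \oplus \id_{M^n} : M^{m+n} \to M^{k+n}$ and using the elementary identities
\[ (\varphi \oplus \id)(N \oplus N_2) = \varphi(N) \oplus N_2, \qquad (\varphi \oplus \id)^{-1}(N' \oplus N_2) = \varphi^{-1}(N') \oplus N_2. \]

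The second stage is then immediate. Given $(N_1 \subset M^m), (N_2 \subset M^m) \in \mathcal{C}$, stage one yields $(N_1 \oplus N_2 \subset M^{2m}) \in \mathcal{C}$. Axiom (c) applied to the diagonal $\Delta : M^m \to M^{2m}$ gives $N_1 \cap N_2 = \Delta^{-1}(N_1 \oplus N_2) \in \mathcal{C}$, and axiom (b) applied to $\sigma : M^{2m} \to M^m$ gives $N_1 + N_2 = \sigma(N_1 \oplus N_2) \in \mathcal{C}$.

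The main obstacle here is conceptual rather than computational, namely recognising the correct intermediate statement: closure under intersection and sum in a fixed ambient is not directly visible from the axioms, but once one passes to $M^{m+n}$ and treats the two subobjects externally, both operations become preimage and image under the evident morphisms $\Delta$ and $\sigma$, and the verification of (a)--(c) for $\mathcal{C}_{N_2}$ is routine.
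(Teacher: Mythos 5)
Your proposal is correct, and its overall skeleton is exactly the paper's: reduce everything to closure of $\mathcal{C}$ under \emph{external} direct sums, then recover $N_1 \cap N_2$ as the preimage of $N_1 \oplus N_2$ under the diagonal $M^m \rightarrow M^{2m}$ and $N_1 + N_2$ as its image under the addition map $M^{2m} \rightarrow M^m$, using axioms (c) and (b). The only place where you diverge is the external-sum step itself. The paper argues by unwinding membership in $\mathcal{C}$: each of $N_1$, $N_2$ is built from the trivial subobjects by a finite chain of images and preimages (the diagram exhibited in the proof of (P2 left) $\Rightarrow$ (P1 right)), and one takes the direct sum of the two chains, padding with identities. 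You instead fix $N_2 \in \mathcal{C}$, form the auxiliary class $\mathcal{C}_{N_2} = \{(N \subset M^m) : (N \oplus N_2 \subset M^{m+n}) \in \mathcal{C}\}$, check (a)--(c) for it (with (a) following from a single application of (b) to the inclusion $M^n \hookrightarrow M^{m+n}$ and of (c) to the projection $M^{m+n} \twoheadrightarrow M^n$, and (b), (c) from $\varphi \oplus \id$ together with the identities $(\varphi \oplus \id)(N \oplus N_2) = \varphi(N) \oplus N_2$ and $(\varphi \oplus \id)^{-1}(N' \oplus N_2) = \varphi^{-1}(N') \oplus N_2$), and invoke minimality of $\mathcal{C}$. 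Both arguments are sound; yours is a cleaner and more self-contained formalization that avoids manipulating explicit build chains (and in particular needs no induction on the length of the chain for $N_2$), while the paper's is shorter because it can point back to the diagram already drawn in the proof of the equivalence of the principality axioms.
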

There is an obvious dual assertion whose formulation we leave to the reader. 
\begin{proof}
For some $n\ge 0$, consider the morphisms
\[ \xymatrix{ M^n \ar[rr]^-{s=(\id, \id)} & & M^n \oplus M^n \ar[rr]^-{p=(\id, \id)} & & M^n. } \]
Given subobjects  $N_1, N_2$ in $M^n$, we have
\[ N_1 \cap N_2 = s^{-1}(N_1 \oplus N_2) \quad  \text{and} \quad N_1 + N_2 = p(N_1 \oplus N_2).   \]
Therefore it suffices to see that $N_1 \oplus N_2 \subset M^{2n}$ is in $\mathcal{C}$. However, $N_1$ and  $N_2$ can be successively build be images and preimages of objects under morphisms $M^n \rightarrow M^m$. One can then take the direct sum of the whole resulting sequences (cf.\@ the proof of (P2 left) $\Rightarrow$ (P1 right) above). 
\end{proof}

\begin{PROP}\label{PROPWP}
There is a surjection
\[  H_B(M) \otimes_{\End(M)^{\op}} H_{dR}(M)^\vee \twoheadrightarrow P(M) \]
which is an isomorphism if $M$ is principal. In this case $P^2(M) = P^3(M) = \cdots = P(M)$. 
\end{PROP}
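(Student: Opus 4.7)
The plan is to construct and study the chain of surjections
\[ H_B(M) \otimes_\Q H_{dR}(M)^\vee \twoheadrightarrow H_B(M) \otimes_{\End(M)^{\op}} H_{dR}(M)^\vee \twoheadrightarrow P^2(M) \twoheadrightarrow P^\infty(M) = P(M), \]
showing that the first arrow is defined unconditionally (which gives the asserted general surjection onto $P(M)$) and that the remaining arrows are isomorphisms when $M$ is principal. The first arrow exists because $\alpha(\sigma) \otimes \omega = \sigma \otimes \alpha^\vee(\omega)$ is the $N = M$, $\alpha \in \End(M)$ case of Definition~\ref{DEFFORMALPERIODS}. For the second arrow I would realize each such endomorphism relation as a depth-$2$ exact-sequence relation, via
\[ 0 \to M \xrightarrow{(\id,\alpha)} M^2 \xrightarrow{(\alpha,-\id)} M \to 0 \]
with $(\sigma,\alpha(\sigma)) \in H_B(N')$ and $(\alpha^\vee(\omega),-\omega) \in H_{dR}(N)^\vee$; the third arrow is Theorem~\ref{SATZQ2}.

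Before the main step I would record a matrix upgrade of the endomorphism relation: for any $\alpha: M^k \to M^m$, viewed as a matrix $(\alpha_{ij})$ of endomorphisms of $M$, and any $\tau \in H_B(M^k)$, $\omega \in H_{dR}(M^m)^\vee$, one has
\[ \sum_{i=1}^m \alpha(\tau)_i \otimes \omega_i = \sum_{j=1}^k \tau_j \otimes \alpha^\vee(\omega)_j \]
in $H_B(M) \otimes_{\End(M)^{\op}} H_{dR}(M)^\vee$. This reduces entrywise, via bilinearity, to the single endomorphism relation applied to each $\alpha_{ij}$.

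The heart of the argument is injectivity of $H_B(M) \otimes_{\End(M)^{\op}} H_{dR}(M)^\vee \to P^\infty(M)$ for principal $M$. Suppose $\gamma = \sum_i \sigma_i \otimes \omega_i$ maps to zero. By the Key Lemma~\ref{LEMMAP} there is an exact sequence $0 \to N' \to M^m \to N \to 0$ with $(\sigma_i) \in H_B(N')$ and $(\omega_i) \in H_{dR}(N)^\vee$. Feeding $N' \hookrightarrow M^m$ into axiom (P1 left) of Definition~\ref{DEFWP} produces morphisms $\alpha: M^k \to M^m$ and $\beta: M^k \to M^l$ with $N' = \im(\alpha|_{\ker \beta})$. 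Exactness of $H_B$ yields a lift $\tau \in H_B(\ker\beta) \subseteq H_B(M^k)$ with $\alpha(\tau) = \sigma$ and $\beta(\tau) = 0$. On the dual side, the functional $\alpha^\vee(\omega) \in H_{dR}(M^k)^\vee$ vanishes on $H_{dR}(\ker\beta)$ because $\omega$ annihilates $H_{dR}(N') = \alpha(H_{dR}(\ker\beta))$; hence it factors through $H_{dR}(\im\beta) \hookrightarrow H_{dR}(M^l)$, and dualising produces some $\eta \in H_{dR}(M^l)^\vee$ with $\beta^\vee(\eta) = \alpha^\vee(\omega)$. Two applications of the matrix upgrade then give
\[ \sum_i \sigma_i \otimes \omega_i = \sum_j \tau_j \otimes \alpha^\vee(\omega)_j = \sum_j \tau_j \otimes \beta^\vee(\eta)_j = \sum_q \beta(\tau)_q \otimes \eta_q = 0. \]
The delicate point I expect to be the main obstacle is this dual-side lifting of $\alpha^\vee(\omega)$ along $\beta^\vee$: it crucially needs exactness of both fibre functors together with the precise shape of diagram~(\ref{diawe1}) supplied by principality (in particular the fact that the same $\beta$ whose kernel witnesses $N'$ on the $H_B$ side also controls the annihilator of $\alpha^\vee(\omega)$ on the $H_{dR}^\vee$ side). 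Once injectivity is established, the whole chain of surjections collapses and consequently $P^k(M) = P(M)$ for every $k \ge 2$.
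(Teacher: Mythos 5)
Your proposal is correct and follows essentially the same route as the paper: you establish the endomorphism relations in $P^2(M)$ via the two-term exact sequence $0 \to M \to M^2 \to M \to 0$, reduce via Theorem~\ref{SATZQ2} and Key Lemma~\ref{LEMMAP} to killing exact-sequence relations, and then use (P1 left) together with exactness of $H_B$ and $H_{dR}$ to lift $\sigma$ through $\ker\beta$ and the functional $\alpha^\vee(\omega)$ along $\beta^\vee$, exactly as in the paper's argument with the matrices $(e_{ij})$ and $(f_{ik})$. Your ``matrix upgrade'' is just an explicit packaging of the double-sum manipulation the paper performs, so there is no substantive difference.
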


For an alternative description, observe that any $K$-isomorphism $H_B(M) \otimes_\Q K \cong H_{dR}(M)$ induces an isomorphism
\begin{equation}\label{eqTRANSP}
H_B(M) \otimes_{\End(M)^{\op}} H_{dR}(M)^\vee \cong \End(H_{dR}(M)) / [\End(M), \End(H_{dR}(M))]. 
\end{equation}

\begin{proof}
Let $e \in \End(M)$ and
consider the exact sequence
\[ \xymatrix{ M \ar[r]^-{(e, \id)} & M \oplus M \ar[r]^-{(\id,-e)} & M. }  \]
This gives relations
\[   \sigma \otimes e^\vee(\omega) -  e(\sigma) \otimes \omega = 0  \]
in $P^2(M)$ for all $\sigma \in H_B(M)$ and $\omega \in H_{dR}(M)^\vee$. 

(Seeing $\alpha := \sigma \otimes \omega$ as elements in $ \End(H_{dR}(M))$, and using an isomorphism as in (\ref{eqTRANSP}), we can write this as:
\[ [e, \alpha] = e \alpha - \alpha e \]
Since the relations are obviously closed under sum, we may take $\alpha \in \End(M)$ arbitrary.)

Using Theorem~\ref{SATZQ2} we are left to show that any relation in $P^\infty(M)$ is generated by those if $M$ is principal. 
Consider an exact sequence
\[ \xymatrix{ 0 \ar[r] & N' \ar[r]^{} & M^n \ar[r] & N  \ar[r] & 0, }  \]
$\sigma=(\sigma_i) \in H_B(N')$, and $\omega=(\omega_i) \in H_{dR}(N)^\vee$.
By assumption we find a commutative diagram
\[ \xymatrix{
0 \ar[d] \\
 \ker \ar[d]_{\beta} \ar[r]^{\alpha} & M^n \ar[r] & N \ar[r] & 0 \\
 M^l \ar[ru]_-{(e_{ij})} \ar[d]_-{(f_{ik})}  \\
M^m } \]
such that $N' = \im(\alpha)$ and such that the vertical sequence is also exact. Choose a lift $\widetilde{\sigma} = (\widetilde{\sigma}_1, \dots \widetilde{\sigma}_l)$ in $H_B(\ker)$ of $\sigma$ under $\alpha$.
We have in $H_B(M) \otimes_{\End(M)^{\op}} H_{dR}(M)^\vee$ the relation
\[ \sum_{\substack{i=1..l\\j=1..n}} (\widetilde{\sigma}_i  \otimes e_{ij}^\vee (\omega_j)  -  e_{ij}(\widetilde{\sigma}_i)  \otimes \omega_j ) = 0. \]
However there exist $(\omega'_k) \in H_{dR}(M^m)^\vee$ such that
\[ \sum_{j=1}^n e_{ij}^{\vee} (\omega_j)    = \sum_{k=1}^m  f_{ik}^{\vee} (\omega'_k)  \]
because $\beta^\vee (\sum_j e_{ij}^{\vee} (\omega_j) ) = 0$.
 In $H_B(M) \otimes_{\End(M)^{\op}} H_{dR}(M)$ there is also the relation
\[  \sum_{\substack{i=1..l\\k=1..m}} (\widetilde{\sigma}_i  \otimes f_{ik}^\vee(\omega'_k)  -  f_{ik} (\widetilde{\sigma}_i)  \otimes \omega_k' ) = 0. \]
Now 
\[ \sum_{i=1}^l f_{ik}(\widetilde{\sigma}_i) = 0  \]
because $\widetilde{\sigma} \in H_B(\ker)$.
Putting everything together we get 
\[ \sum_{j=1}^n \left(\sum_{i=1}^l e_{ij}  \widetilde{\sigma}_i \right)  \otimes \omega_j  =  \sum_{j=1}^n \sigma_j \otimes \omega_j  = 0.  \]
Those are precisely the relations that hold in $P^\infty(M)$.
Hence all relations in $P^\infty(M)$ are accounted for already in $H_B(M) \otimes_{\End(M)^{\op}} H_{dR}(M)^\vee$ and  already in $P^2(M)$.
\end{proof}
\begin{FRAGE}
It seems likely that the class of principal objects is the most general for which the formal periods can be described by endomorphisms as in Proposition~\ref{PROPWP}. The author did not investigate this further though. 
\end{FRAGE}

\section{Application to 1-motives}\label{SECT1MOT}

\begin{PAR}
In this section we consider the $\Q$-linear category of Deligne 1-motives \cite{Del74} over $\Qbar$ equipped with their natural fiber functors $(\cat{1-Mot${}_\Qbar$}, H_{B}, H_{dR})$. Recall that a Deligne 1-motive over $\Qbar$ is a morphism
\[ [ L \rightarrow G ] \]
where $G$ is a semi-Abelian variety defined over $\Qbar$ and $L = \Z^n$ for some $n\ge 0$. As semi-Abelian variety $G$ sits in an exact sequence of $\Qbar$-algebraic groups
\[ \xymatrix{ 0 \ar[r] & T \ar[r] & G \ar[r] & A \ar[r] & 0 }  \]
where $A$ is an Abelian variety and $T=(\Gm)^m$ for some $m \ge 0$. 
The $\Q$-linear category of semi-Abelian varieties is a full subcategory of the category of 1-motives interpreting $G$ as the 1-motive $[0 \rightarrow G]$, denoted simply by $G$ again. 
For a finite dimensional $\Q$-vector space $L$ we also denote by $L$ the 1-motive $[L \rightarrow 0] := [\Z \rightarrow 0] \otimes L$. 
\end{PAR}

\begin{PAR}
The morphisms in $\cat{1-Mot${}_\Qbar$}$ are obvious commutative diagrams tensored with $\Q$ to make the category $\Q$-linear. 
The fiber functors are equipped with a weight filtration as in \ref{PARWEIGHT}. Furthermore there is a weight filtration for each 1-motive $M$ in the category $\cat{1-Mot${}_\Qbar$}$ itself, given by
\[ W^{i}(M) = \begin{cases} M & i \ge 0 \\ G & i = -1 \\ T  & i = -2 \\ 0 & i \le -3  \end{cases}\]
We have the Tate motives
\[ \Q = [\Z \rightarrow 0] \qquad \Q(1) =  \Gm  \]
and the graded object of a 1-motive is given by
\[ \mathrm{gr}_W^{i}(M) = \begin{cases} L_\Q \cong \Q^n & i = 0 \\ A & i = -1 \\ T \cong \Q(1)^m  & i = -2  \end{cases}\]
\end{PAR}

We adopt from \cite{HW18} the following notation:
\begin{DEF}A 1-motive is {\bf reduced} if $\Hom(G, \Q(1)) = 0$ and $\Hom(\Q, M/W^{-2}M) = 0$.
A 1-motive is called {\bf saturated} if it is reduced and we have $\End(M) = \End(G) = \End(A)$. 
\end{DEF}

In \cite[Chapter 2]{HW18} it is shown that any 1-motive is a subobject of the direct sum of a saturated 1-motive and a motive of Baker type (i.e.\@ a 1-motive with $A = 0$). 
We will concentrate on the case of saturated 1-motives. 

Consider the subcategories $\mathcal{A}_0 = \cat{1-Mot${}_\Qbar\ \{\le\!-1\}$}$ of 1-motives in weight $-1, -2$ (i.e.\@ semi-Abelian vareities) and $\mathcal{A}_1 = \cat{1-Mot${}_\Qbar\ \{0\}$} \cong \cat{f.d.-$\Q$-Vect}$. 
Then every 1-motive $M = [L \rightarrow G]$ yields an admissible exact sequence in the sense of Definition~\ref{DEFADMISSIBLE}:
\[ \xymatrix{ 0 \ar[r] & G \ar[r] & M  \ar[r] & L_\Q \ar[r] & 0. }  \]
Furthermore the sub-1-motive $M' = [L' \rightarrow G']$ is a universal extension of $G' \subset G$ in the sense of Lemma~\ref{LEMMAUNIVERSAL} if and only if the square
\[ \xymatrix{  L' \ar[r] \ar@{^{(}->}[d] & G' \ar@{^{(}->}[d]  \\
L \ar[r] & G
} \]
is Cartesian. One directly sees that universal extensions exist. Similarly for semi-Abelian schemes
\[ \xymatrix{ 0 \ar[r] & T \ar[r] & G  \ar[r] & A \ar[r] & 0 }  \]
a quotient $G \twoheadrightarrow G'$ is a universal lift of the quotient $A \twoheadrightarrow A'$ if and only if the diagram
\[ \xymatrix{  X^*(T') \ar[r] \ar@{^{(}->}[d] & (A')^\vee \ar@{^{(}->}[d]  \\
X^*(T) \ar[r] & A^\vee
} \]
is Cartesian.

\begin{PROP}\label{PROP1MOT}
If $M$ is a saturated 1-motive  then $M \oplus \Q \oplus \Q(-1)$ is principal.
\end{PROP}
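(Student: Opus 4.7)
The plan is to apply the inductive principality criterion developed in Appendix~\ref{APDX} to the canonical weight filtration of $N := M \oplus \Q \oplus \Q(-1)$. The three weight-graded pieces are
\[ \mathrm{gr}_W^0(N) = L_\Q \oplus \Q, \qquad \mathrm{gr}_W^{-1}(N) = A, \qquad \mathrm{gr}_W^{-2}(N) = T \oplus \Q(-1). \]
Each is semi-simple — a sum of Tate objects in the outer weights, and an Abelian variety (semi-simple up to isogeny in the $\Q$-linear setting) in the middle — and is therefore principal by Example~\ref{EX1}(2).

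Next, the two-step weight filtration yields the exact sequences
\[ 0 \to G \oplus \Q(-1) \to N \to L_\Q \oplus \Q \to 0, \]
\[ 0 \to T \oplus \Q(-1) \to G \oplus \Q(-1) \to A \to 0, \]
both of which should be admissible in the sense of Definition~\ref{DEFADMISSIBLE}. For the first, the characterization recalled before the proposition — a sub-1-motive $[L' \to G'] \subset [L \to G]$ is a universal extension of $G' \subset G$ iff the defining square of lattices is Cartesian — carries over immediately, since adding the constant-weight summands $\Q$ and $\Q(-1)$ preserves the Cartesian property componentwise. For the second, I use the dual character-lattice criterion for universal lifts of sub-Abelian varieties inside semi-Abelian varieties, noted in the same paragraph.

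The role of the summands $\Q$ and $\Q(-1)$ and of the saturation hypothesis is to ensure that every endomorphism of each graded piece lifts to an endomorphism of $N$, which is the extra input that the appendix's inductive criterion needs in order to promote ``principal graded pieces in admissible sequences'' to ``principal total object''. Saturation $\End(M) = \End(G) = \End(A)$ handles the weight-$(-1)$ piece, through the already-existing extension of $\End(A)$ to $\End(M)$. The summand $\Q$ handles the weight-$0$ piece $L_\Q \oplus \Q$: reducedness $\Hom(\Q, M/W^{-2}M) = 0$ means no endomorphism of this piece spills into lower weights, so endomorphisms lift by linearity. Dually, $\Q(-1)$ handles the weight-$(-2)$ piece, with $\Hom(G, \Q(1)) = 0$ playing the symmetric role.

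The main obstacle will be the simultaneous verification of admissibility and the endomorphism-lifting condition in the presence of the outer summands $\Q$ and $\Q(-1)$: I need to check that the Cartesian-square characterization of universal extensions really is stable under the additions and that no new morphisms between different weight strata are introduced that would obstruct the lifting. The reducedness hypotheses ($\Hom(G, \Q(1)) = 0$ and $\Hom(\Q, M/W^{-2}M) = 0$) are precisely what rules these pathological morphisms out, and together with saturation they make the appendix's inductive step apply verbatim, yielding principality of $N$.
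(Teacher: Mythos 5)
Your overall strategy---induct along the weight filtration using the machinery of Appendix~\ref{APDX}, with the Tate summands and the reducedness/saturation hypotheses feeding its hypotheses---is indeed the paper's strategy, but as written your plan has a genuine gap in two places. First, you propose to apply the criterion directly to the weight filtration of $N = M \oplus \Q \oplus \Q(1)$ (the statement's $\Q(-1)$ must be read as $\Q(1)$, as in the paper's own proof; $\Q(-1)$ has weight $+2$ and is not a 1-motive), i.e.\@ to the sequences $0 \to T \oplus \Q(1) \to G \oplus \Q(1) \to A \to 0$ and $0 \to G \oplus \Q(1) \to N \to L_\Q \oplus \Q \to 0$. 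But the conclusions of Propositions~\ref{PROPSATURATEDPRINCIPAL} and~\ref{PROPSATURATEDPRINCIPALVAR} are of the form ``(middle term) $\oplus$ (generator resp.\@ cogenerator) is principal'', and their hypotheses $\Hom(M,\mathcal{A}_0)=0$, resp.\@ $\Hom(\mathcal{A}_1,M)=0$, \emph{fail} for your middle terms: $\Hom(G\oplus\Q(1),\Q(1))\neq 0$ and $\Hom(\Q,N)\neq 0$ precisely because the Tate summands are already inside. The order of operations matters: one applies Proposition~\ref{PROPSATURATEDPRINCIPALVAR}(1) to $0 \to T \to G \to A \to 0$ (where $\Hom(G,\Q(1))=0$ holds by reducedness) to conclude that $G\oplus\Q(1)$ is principal, and only then treats $0 \to G\oplus\Q(1) \to M\oplus\Q(1) \to L_\Q \to 0$, applying Proposition~\ref{PROPSATURATEDPRINCIPALVAR}(2) with the cogenerator $\Q$ and the vanishing $\Hom(\Q, M\oplus\Q(1))=0$.

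Second, you never verify that these sequences are right resp.\@ left saturated, and your substitute---``every endomorphism of each graded piece lifts''---is strictly weaker than what is needed. Saturatedness concerns arbitrary morphisms $M_1^n \to M_1^m$ between powers and demands that the kernel (resp.\@ image) of the chosen lift be the \emph{universal} lift (resp.\@ extension); liftability of endomorphisms alone gives no control on these kernels and images. In the paper this is supplied by Lemma~\ref{LEMMASATSS}, where the saturation equalities $\End(M)=\End(G)=\End(A)$ give the split surjections and the semi-simplicity of the \emph{algebra} $\End(A)$ (not merely of the objects $A$, $T$, $L_\Q$, which only gives principality of the graded pieces) is essential for the idempotent decomposition; and after adding the $\Q(1)$ summand, left saturatedness of $0 \to G\oplus\Q(1) \to M\oplus\Q(1) \to L_\Q \to 0$ is not automatic but is proved via Lemma~\ref{LEMMASATSUM}, which is exactly where the reducedness condition $\Hom(\Q, M/T)=0$ enters (condition (L3)). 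Your paragraph on the Cartesian-square description only shows that universal lifts/extensions \emph{exist}; it does not show that the lifted morphisms have universal kernels/images, which is the actual content of saturatedness. Filling these two points essentially reconstructs the paper's proof.
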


\begin{proof}
The exact sequence
\[ \xymatrix{ 0 \ar[r] & T \ar[r] & G  \ar[r] & A \ar[r] & 0 }  \]
is admissible w.r.t.\@ $\mathcal{A}_0 = \cat{1-Mot${}_\Qbar\ \{-2\}$}$ and $\mathcal{A}_1 = \cat{1-Mot${}_\Qbar\ \{-1\}$} \cong \cat{Ab.var${}_\Qbar$}$.
Note that $\mathcal{A}_0 \cong \cat{f.d.-$\Q$-Vect}$ consists just of powers of $\Q(1)$. 
By Lemma~\ref{LEMMASATSS} the above sequence is right saturated because $\End(G)=\End(A)$ is semi-simple. 
Furthermore $A$ and $T$ are semi-simple and thus principal. 
Since $\Hom(G, \Q(1))=0$, Proposition~\ref{PROPSATURATEDPRINCIPALVAR} implies that $G \oplus \Q(1)$ is principal. 

The exact sequence
\[ \xymatrix{ 0 \ar[r] & G  \ar[r] & M  \ar[r] & L_\Q \ar[r] & 0 }  \]
is admissible w.r.t.\@ $\mathcal{A}_0 = \cat{1-Mot${}_\Qbar\ \{\le\!-2\}$} \cong \cat{Semi.ab.var${}_\Qbar$}$ and $\mathcal{A}_1 = \cat{1-Mot${}_\Qbar\ \{0\}$} \cong \cat{f.d.-$\Q$-Vect}$.
It is left saturated by Lemma~\ref{LEMMASATSS} because $\End(M)=\End(G)$ is semi-simple.
The exact sequence
\[ \xymatrix{ 0 \ar[r] & G \oplus \Q(1) \ar[r] & M \oplus \Q(1) \ar[r] & L_\Q \ar[r] & 0 }  \]
is again left saturated by Lemma~\ref{LEMMASATSUM} because $\Hom(\Q, \coker(\Q(1), M)) = \Hom(\Q, M/T) = 0$ by assumption. 
Furthermore,  since also $\Hom(\Q, M)=0$, the 1-motive
$M \oplus \Q(1) \oplus \Q$ is principal by Proposition~\ref{PROPSATURATEDPRINCIPALVAR}.
\end{proof}

\begin{PAR}
Let $M$ be saturated and assume for simplicity that $\mathrm{gr}^0(M) \not= 0$ and $\mathrm{gr}^{-2}(M) \not= 0$.
The latter implies that $P(M) = P(M')$ for $M' := M \oplus \Q \oplus \Q(1)$.
Propositions \ref{PROP1MOT} and \ref{PROPWP} imply therefore (using any identification $H(M'):=H_B(M')\otimes \Qbar \cong H_{dR}(M')$):
\[ P(M) = P(M') \cong \End(H(M')) / [\End(M'),\End(H(M'))] \]

Denote $B := \End(M) = \End(G) = \End(A)$ and observe that $B$ acts on $\mathrm{gr}^{0}M=L_\Q$ and $\mathrm{gr}^{-2}M = T$.

The weight filtration on $H_{B}(M)$, and $H_{dR}(M)$, respectively, induces a filtration
\[ P(M) = P^{0}(M) \supseteq P^{-1}(M) \supseteq P^{-2}(M) \supseteq  P^{-3}(M) = 0. \]
We have furthermore using saturatedness
\begin{eqnarray*}
 \End(M') &=& \underbrace{\End(A)}_{B} \oplus \underbrace{\End(\Q)}_{\Q} \oplus \underbrace{\End(\Q(1))}_{\Q}  \\
    && \oplus\underbrace{\Hom(\Q(1), T)}_{X_*(T)_\Q} \oplus \underbrace{\Hom(L, \Q)}_{L^\vee_\Q}
\end{eqnarray*}
and after a short calculation of the commutators $[\End(M'),\End(H(M'))]$ we get
\end{PAR}

\begin{KOR}\label{CORPERMOTIVES}
If $M$ is a saturated 1-motive with $\mathrm{gr}^0(M) \not= 0$ and $\mathrm{gr}^{-2}(M) \not= 0$, then  denoting $B:=\End(M)=\End(G)=\End(A)$,
the non-zero pieces of the graded object $\mathrm{gr} P(M)$ are given by:
\begin{eqnarray*}
 \mathrm{gr}^i P(M) &\cong&  \begin{cases} \End(H(\Q)) \oplus \End(H(\Q(1))) \oplus \End_{B}(H(A)) & i=0  \\
   \Hom_{B}(H(T), H(A)) \oplus \Hom_{B}(H(A), H(L_\Q)) & i=-1   \\
 \Hom_{B}(H(T), H(L_\Q))   & i=-2 \end{cases}
\end{eqnarray*}
\end{KOR}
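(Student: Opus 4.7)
The proof reduces to a commutator calculation in a $5\times 5$ block-matrix algebra.

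First I reduce $P(M)$ to a concrete form and decompose $\End(M')$. The hypotheses $\mathrm{gr}^0(M)\ne 0$ and $\mathrm{gr}^{-2}(M)\ne 0$ make $\Q$ a quotient of $M$ (via $M\twoheadrightarrow L_\Q$ and projection to a $\Q$-summand) and $\Q(1) = \Gm$ a subobject of $M$ (as a $\Gm$-summand of $T\subset G\subset M$), so Proposition~\ref{PROPBASICPROP}(1), applied twice, yields $P(M) = P(M\oplus\Q) = P(M')$. By Proposition~\ref{PROP1MOT} the motive $M'$ is principal, so Proposition~\ref{PROPWP} together with the identification (\ref{eqTRANSP}) gives
\[
 P(M) \cong \End(H(M'))/[\End(M'),\End(H(M'))],
\]
where I abbreviate $H := H_B(-)\otimes_\Q\Qbar$, identified with $H_{dR}(-)$. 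The asserted direct sum $\End(M') = B\oplus\Q\oplus\Q\oplus L_\Q^\vee\oplus X_*(T)_\Q$ follows from reducedness ($\Hom(\Q,M)=0$ and $\Hom(M,\Q(1))=0$) and saturatedness ($\End(M)=\End(G)=\End(A)=B$).

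Second, I introduce the weight filtration. Fixing a splitting $H(M') = H(L_\Q)\oplus H(\Q)\oplus H(A)\oplus H(T)\oplus H(\Q(1))$ with weights $0,0,-1,-2,-2$, every element of $\End(M')$ is weight-preserving as an endomorphism of $H(M')$: the three diagonal $\End$-summands are clear, while $L_\Q^\vee\subseteq\Hom(H(L_\Q),H(\Q))$ and $X_*(T)_\Q\subseteq\Hom(H(\Q(1)),H(T))$ both map between spaces of the same weight. Consequently the commutator subspace is homogeneous for the weight-shift grading on $\End(H(M'))$, the filtration descends to $P(M')$, and each $\mathrm{gr}^{-i}\,P(M)$ is computed as the weight-shift-$(+i)$ block of $\End(H(M'))$ modulo the commutators in that same block.

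Finally, the claim follows from direct $5\times 5$ block-matrix bookkeeping. Commutators with $\End(\Q)$ or $\End(\Q(1))$ immediately kill any block whose source-or-target (but not both) is $H(\Q)$ or $H(\Q(1))$. Commutators $[\phi,f] = -f\phi$ for $\phi\in L_\Q^\vee$ and $f \in \Hom(H(\Q),V)$ with $V\in\{H(L_\Q),H(A),H(T)\}$ span the weight-lowering blocks $\Hom(H(L_\Q),V)$, so these are annihilated; the analogous calculation with $X_*(T)_\Q$ kills the $H(\Q(1))$-side. In the weight-preserving block, commutators $[\phi,\beta]$ for $\beta \in \Hom(H(\Q),H(L_\Q))$ produce the trace identifications $\beta\phi \equiv \phi(\beta)\,\id_{H(\Q)}$ which, combined with the $[B,-]$ commutators, collapse $\End(H(L_\Q))\oplus \End(H(\Q))$ onto $\End(H(\Q))$ (and analogously on the weight $-2$ side). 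The remaining diagonal block $\End(H(A))$ reduces to $\End_B(H(A))$ via the standard isomorphism $\End(V)/[B,\End(V)]\cong \End_B(V)$, valid since $B$ is separable (semisimple by Poincar\'e complete reducibility for $A$), and the same identification gives $\Hom_B(H(T),H(A))$, $\Hom_B(H(A),H(L_\Q))$, $\Hom_B(H(T),H(L_\Q))$ as the only survivors in the weight-raising blocks. What remains in each graded piece is exactly what the corollary states; the main obstacle is just to perform this bookkeeping systematically, verifying for each of the $25$ block entries of $\End(H(M'))$ which commutator annihilates or identifies it.
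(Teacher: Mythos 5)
Your argument is correct and is essentially the paper's own: the paper likewise passes to $M'=M\oplus\Q\oplus\Q(1)$ using Proposition~\ref{PROPBASICPROP}, invokes Propositions~\ref{PROP1MOT} and~\ref{PROPWP} together with (\ref{eqTRANSP}), decomposes $\End(M')$ as $B\oplus\Q\oplus\Q\oplus X_*(T)_\Q\oplus L_\Q^\vee$ via reducedness and saturatedness, and leaves precisely your block-commutator bookkeeping as the unstated ``short calculation''. The only point you should make explicit is that the splitting of the weight filtration on $H(M')$ must be chosen $B$-equivariantly (possible because $B=\End(A)$ is semisimple), since for an arbitrary splitting the elements of $B$ act only as filtered, not weight-graded, endomorphisms and your homogeneity claim for the commutator subspace would not be immediate.
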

Counting dimensions one gets back the formulas obtained in \cite[Sections 9.2, 9.10, 9.14]{HW18}.

\begin{PAR}
As another illustration of the method consider
 a 1-motive of Baker type  $M=[L \rightarrow \Gm\otimes X]$ (with $L$ and $X$ non-zero $\Q$-vector spaces) although in this simple case it is easier to examine the space of formal periods directly.
By Proposition~\ref{PROPBASICPROP}, 4., we have
\[ P(M) = P(\widetilde{M}) \]
with $\widetilde{M}$ of the form:
\[ 0 \rightarrow \Q(1) \rightarrow \widetilde{M} \rightarrow \Hom(X, L) \rightarrow 0.  \]
$\widetilde{M}$ has a maximal submotive $N \subset \widetilde{M}$ of weight 0
and we get an exact sequence
\[ 0 \rightarrow \Q(-1) \rightarrow \widetilde{M}/N \rightarrow \Hom(X, L)/N \rightarrow 0  \]
which is left saturated by Lemma~\ref{LEMMASATSS} and thus by Proposition~\ref{PROPSATURATEDPRINCIPALVAR}  the 1-motive 
 $\widetilde{M}/N \oplus \Q$ is principal. Therefore 
 \[ P(\widetilde{M}/N \oplus \Q) \cong \End(H(\Q(1))) \oplus \End(H(\Q)) \oplus \Hom(X, L)/N.   \]
\end{PAR}

\appendix

\section{The Yoga of principality and saturatedness}\label{APDX}

\begin{PAR}
Let the setting be as in \ref{PARTUPLE}.
In this section let $\mathcal{A}_0, \mathcal{A}_1$ be Abelian subcategories of $\mathcal{A}$ such that 
\begin{equation} \label{eqorth}
\Hom(M_0, M_1) = \Hom(M_1, M_0) = 0 \quad  \text{ for all pairs } (M_0, M_1) \text{ with } M_i \in \mathcal{A}_i.
\end{equation}
We write this more compactly as 
$\Hom(\mathcal{A}_0, \mathcal{A}_1) = \Hom(\mathcal{A}_1, \mathcal{A}_0) = 0$
and also use the notation $\Hom(M, \mathcal{A}_0)=0$, say,  to express $\Hom(M, M_0)=0$ for all $M_0 \in \mathcal{A}_0$.
\end{PAR}

\begin{DEF}\label{DEFADMISSIBLE}
An exact sequence
\[ \xymatrix{  0 \ar[r] & M_0 \ar[r] & M \ar[r] & M_1  \ar[r] & 0    } \]
with $M_i \in \mathcal{A}_i$ is called an {\bf admissible} exact sequence.
\end{DEF}

\begin{PAR}
 In this section we investigate criteria for the principality of $M \oplus M_0$ (resp.\@ $M \oplus M_1$) supposing
that $M_0$ and $M_1$ are principal. Note that it is almost never the case that $M$ itself is principal unless the sequence splits.
Note that we have
\[ P(M) = P(M \oplus M_0) = P(M \oplus M_1) \]
by Proposition~\ref{PROPBASICPROP}
hence knowing that $M \oplus M_0$ (resp.\@ $M \oplus M_1$) is principal allows to compute $P(M)$ using endomorphisms of $M \oplus M_0$ (resp.\@ of $M \oplus M_1$). 
Induction may be applied. 
\end{PAR}

\begin{PAR}
Every subobject $M' \subset M$ induces an injection of admissible exact sequences: 
\[ \xymatrix{  
0 \ar[r] & M'_0  \ar@{^{(}->}[d] \ar[r] & M'  \ar@{^{(}->}[d] \ar[r] & M_1'  \ar@{^{(}->}[d] \ar[r] & 0  \\
0 \ar[r] & M_0  \ar[r] & M  \ar[r] & M_1  \ar[r] & 0 }  \]
The same is true for quotients. 
Furthermore for two admissible sequences and a given morphism $M \rightarrow N$ 
\[ \xymatrix{  0 \ar[r] & M_0 \ar[r] \ar@{.>}[d] & M \ar[r] \ar[d] & M_1 \ar@{.>}[d]  \ar[r] & 0    \\
  0 \ar[r] & N_0 \ar[r] & N \ar[r] & N_1  \ar[r] & 0    } \]
there is always an induced morphism of exact sequences (because $\Hom(M_0, N_1)=0$ by (\ref{eqorth})) and by the Snake Lemma,
it induces an admissible subsequence of kernels and an admissible quotient sequence of cokernels. 
\end{PAR}

\begin{LEMMA}\label{LEMMAUNIVERSAL}
Let 
\[ \xymatrix{  0 \ar[r] & M_0 \ar[r]^i & M \ar[r]^p & M_1  \ar[r] & 0    } \]
be an admissible exact sequence.
\begin{enumerate}
\item Assume that $\mathcal{A}$ is Artinian. For each subobject $N'_1 \subset M_1$ there exists a universal lift
\[ \xymatrix{  0 \ar[r] & N'_0 \ar[r] & N' \ar[r] & N_1'  \ar[r] & 0    }  \]
with $N' \subset M$ such that every other lift of the subobject contains $N'$. Equivalently, for every quotient $M_1 \twoheadrightarrow N_1$
there is a universal lift
\[ \xymatrix{  0 \ar[r] & N_0 \ar[r] & N \ar[r] & N_1  \ar[r] & 0    }  \]
with $M \twoheadrightarrow N$ such that every other lift of the quotient factors through $N$. 

\item Assume that $\mathcal{A}$ is Noetherian. For each subobject $N'_0 \subset M_0$ there exists a universal extension
\[ \xymatrix{  0 \ar[r] & N'_0 \ar[r] & N' \ar[r] & N_1'  \ar[r] & 0    }  \]
with $N' \subset M$ such that every other extension of the subobject is contained in $N'$. Equivalently, for every quotient $M_0 \twoheadrightarrow N_0$
there is a universal extension
\[ \xymatrix{  0 \ar[r] & N_0 \ar[r] & N \ar[r] & N_1  \ar[r] & 0    }  \]
with $M \twoheadrightarrow N$ such that every other lift of the quotient projects onto $N$. 
\end{enumerate}
The universal lifts are closed under sums (of the subobjects). The universal extensions are closed under intersections (of the subobjects). 
\end{LEMMA}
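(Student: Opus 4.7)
The plan is to extract everything from a single \textbf{intersection claim}: if $N', N'' \subset M$ are subobjects with $p(N') = p(N'')$, then $p(N' \cap N'') = p(N')$; and its mirror for extensions, namely $N' \cap M_0 = N'' \cap M_0$ implies $(N' + N'') \cap M_0 = N' \cap M_0$. These two statements encode the orthogonality hypothesis in exactly the form needed so that, under the respective chain conditions, the poset of lifts admits a smallest element (DCC) and the poset of extensions a largest one (ACC); once they are established, the rest is bookkeeping.

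For the intersection claim itself I would argue as follows. Set $N'_1 := p(N') = p(N'')$, $N'_0 := N' \cap M_0$, and $N''_0 := N'' \cap M_0$. The composition $N' \hookrightarrow M \twoheadrightarrow M/N''$ induces a monomorphism $N'/(N' \cap N'') \hookrightarrow M/N''$. Since $p(N'') = N'_1$ forces $N'' + M_0 = p^{-1}(N'_1)$, the image of this monomorphism lies in $p^{-1}(N'_1)/N'' \cong (N'' + M_0)/N'' \cong M_0/N''_0$, an object of $\mathcal{A}_0$. On the other hand, the exact sequence $0 \to N'_0 \to N' \to N'_1 \to 0$ passes to a surjection $N'/(N' \cap N'') \twoheadrightarrow N'_1/p(N' \cap N'')$ whose target lies in $\mathcal{A}_1$. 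The orthogonality forces $\mathcal{A}_0 \cap \mathcal{A}_1 = 0$ (any $X$ in both would satisfy $\id_X \in \Hom(\mathcal{A}_0, \mathcal{A}_1) = 0$), so this $\mathcal{A}_1$-quotient of an object embedded in $M_0/N''_0$ must vanish, giving $p(N' \cap N'') = N'_1$. The dual statement for extensions is proved by the mirror argument using $\Hom(\mathcal{A}_1, \mathcal{A}_0) = 0$.

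The rest of part (1) then follows quickly: the set of lifts of a fixed $N'_1 \subset M_1$ is non-empty (it contains $p^{-1}(N'_1)$), so Artinianness supplies a minimal lift $N'_{\min}$, and for any other lift $N''$ the intersection claim makes $N'_{\min} \cap N''$ again a lift contained in $N'_{\min}$, hence equal to it; therefore $N'_{\min} \subseteq N''$. The equivalent quotient formulation is obtained by applying the subobject version to $\ker(M_1 \twoheadrightarrow N_1) \subset M_1$ and passing to $M/N'_{\min}$, which inherits an admissible sequence and is easily verified to be the universal quotient lift. Part (2) is entirely dual: Noetherianness (ACC on subobjects of $M$) combined with the extension intersection claim produces a maximal extension, which by the same reasoning is the largest.

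For the closure assertions, given universal lifts $N_1, N_2$ of $N'_{1,1}, N'_{1,2}$, any lift $L$ of $N'_{1,1} + N'_{1,2}$ satisfies $p(L \cap p^{-1}(N'_{1,i})) = p(L) \cap N'_{1,i} = N'_{1,i}$, so $L \cap p^{-1}(N'_{1,i})$ lifts $N'_{1,i}$ and hence contains $N_i$; it follows that $L \supseteq N_1 + N_2$. Closure under intersection of universal extensions follows symmetrically, after first deducing the auxiliary monotonicity ``$L_1 \cap M_0, L_2 \cap M_0 \subseteq A$ implies $(L_1 + L_2) \cap M_0 \subseteq A$'' as a direct consequence of the intersection claim. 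The principal obstacle I foresee is the intersection claim itself --- everything downstream is standard chain-condition manipulation --- together with verifying that $\mathcal{A}_0$ and $\mathcal{A}_1$ are closed in $\mathcal{A}$ under the subobjects and quotients that intervene, so that the containment $N'/(N' \cap N'') \in \mathcal{A}_0$ is genuinely justified.
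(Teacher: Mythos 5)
Your proposal is correct and follows essentially the same route as the paper: the paper's key identity $p(M'\cap M'')=p(M')\cap p(M'')$ and $i^{-1}(M'+M'')=i^{-1}(M')+i^{-1}(M'')$ (proved there by a snake-lemma diagram using the orthogonality $\Hom(\mathcal{A}_1,\mathcal{A}_0)=0$) plays exactly the role of your intersection claim, after which both arguments conclude via the chain conditions and the same image/preimage manipulations for the closure statements. The only differences are cosmetic --- you prove just the special case of that identity directly via the second isomorphism theorem and invoke the unconditional fact $p(L\cap p^{-1}(S))=p(L)\cap S$ separately, and the closure of $\mathcal{A}_0,\mathcal{A}_1$ under subquotients that you flag is assumed implicitly in the paper as well.
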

\begin{proof}
Let 
\[ \xymatrix{  0 \ar[r] & M'_0 \ar[r] & M' \ar[r] & M_1'  \ar[r] & 0    }  \]
\[ \xymatrix{  0 \ar[r] & M''_0 \ar[r] & M'' \ar[r] & M_1''  \ar[r] & 0    }  \]
be two exact subsequences of the given admissible exact sequence. Consider the diagram
\[ \xymatrix{  
0 \ar[r] & M'_0 \cap M''_0  \ar@{^{(}->}[d] \ar[r] & M' \cap M''  \ar@{^{(}->}[d] \ar[r] & M_1' \cap M_1''  \ar@{^{(}->}[d] \ar[r] & \cdots  \\
0 \ar[r] & M'_0 \oplus M''_0  \ar[d] \ar[r] & M' \oplus M''  \ar[d] \ar[r] & M_1' \oplus M_1'' \ar[d] \ar[r] & 0   \\
0 \ar[r] & (M' + M'') \cap M_0 \ar@{->>}[d] \ar[r] & M' + M'' \ar@{->>}[d] \ar[r] & M_1' + M_1'' \ar@{->>}[d]\ar[r] & 0   \\
\cdots \ar[r] & \coker \ar[r] & 0 \ar[r]  & 0
   }  \]
 The snake lemma and the fact that there are no morphisms $M_1' \cap M_1'' \rightarrow \coker$ (because $(M' + M'') \cap M_0 \in \mathcal{A}_0$) imply
 \begin{equation} \label{eqsumintersect}
  p(M' \cap M'') = p(M') \cap p(M'')  \qquad i^{-1}(M' + M'') = i^{-1}(M') + i^{-1}(M'').
  \end{equation}

This implies that for two lifts $N'$, and $N''$, of $N_1'$ the intersection $N' \cap N''$ is also a lift of $N_1'$. 
 Since $\mathcal{A}$ is Artinian the intersection of all lifts exists and clearly satisfies the universal property. 
 The other statement is dual (swapping the two equivalent formulations). 

 If $N'$ and $N''$ are universal lifts of $N_1'$ and $N_1''$, respectively, and if $N'''$ is a subobject with $p(N''')=N_1'+N_1''$ then $(N''') \cap p^{-1}N_1'$ is
 a lift of $N_1'$ (because of (\ref{eqsumintersect})) and thus contains $N'$ by universality. Similarly
 $(N''') \cap p^{-1}N_2'$ contains $N''$. Therefore $N'''$ contains $N' + N''$. Again the other statement is dual.  
\end{proof}

\begin{DEF}
An admissible exact sequence 
\[ \xymatrix{  0 \ar[r] & M_0 \ar[r] & M \ar[r] & M_1  \ar[r] & 0    } \]
is called
\begin{enumerate}
\item {\bf right saturated} if every morphism
$\varphi_1 : M_1^n \rightarrow M_1^{m}$
has a lift $\varphi: M^n \rightarrow M^m$ such that 
\[ \xymatrix{  0 \ar[r] & \ker(\varphi_0)  \ar[r] & \ker(\varphi)  \ar[r] & \ker(\varphi_1)  \ar[r] & 0    }  \]
is the universal lift of $\ker(\varphi_1)$. 

\item{\bf left saturated}  if every morphism
$\varphi_0 : M_0^n \rightarrow M_0^{m}$
has an extension $\varphi: M^n \rightarrow M^m$ such that 
\[ \xymatrix{  0 \ar[r] & \im(\varphi_0)  \ar[r] & \im(\varphi)  \ar[r] & \im(\varphi_1)  \ar[r] & 0    }  \]
is the universal extension of $\im(\varphi_0)$. 
\end{enumerate}
\end{DEF}

\begin{LEMMA}\label{LEMMASATSS}
Let 
\[ \xymatrix{  0 \ar[r] & M_0 \ar[r]^i & M \ar[r]^p & M_1  \ar[r] & 0    } \]
be an admissible exact sequence.
\begin{enumerate}
\item
If $\End(M) \rightarrow \End(M_1)$ is split surjective and $\End(M_1)$ is a semi-simple $\Q$-algebra and $\Hom(M, \mathcal{A}_0)=0$ 
then the sequence is right saturated.

\item
If $\End(M) \rightarrow \End(M_0)$ is split surjective and $\End(M_0)$ is a semi-simple $\Q$-algebra and $\Hom(\mathcal{A}_1, M)=0$
then it the sequence is left saturated.
\end{enumerate}
\end{LEMMA}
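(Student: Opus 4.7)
The plan is to upgrade the split surjection to an isomorphism $\End(M)\cong\End(M_1)$, use semi-simplicity to build $\varphi$ via an idempotent, and then use functoriality of $\Ext^1$ to show $\ker(\varphi)$ is the smallest lift.

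First, any $\psi\in\End(M)$ with $p(\psi)=0$ factors through $M_0$, giving $\psi\in\Hom(M,\mathcal{A}_0)=0$. Together with split surjectivity this forces $p\colon\End(M)\to\End(M_1)$ to be an isomorphism of $\Q$-algebras, which extends entrywise to $\Hom(M^n,M^m)\cong\Hom(M_1^n,M_1^m)$ and restricts to a ring map $\End(M_1)\to\End(M_0)$. Semi-simplicity of $\End(M_1^n)$ supplies a quasi-inverse $\varphi_1'$ of $\varphi_1$ with $\varphi_1\varphi_1'\varphi_1=\varphi_1$; set $f_1:=1-\varphi_1'\varphi_1$, an idempotent with $\varphi_1 f_1=0$ and $\im(f_1)=\ker(\varphi_1)$. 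Lifting via the isomorphism yields $\varphi,\varphi',f$ satisfying $\varphi f=0$ and $f^2=f$, hence $fM^n\subseteq\ker(\varphi)$ and $M^n=fM^n\oplus f'M^n$ with $f'=1-f$.

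Applying the snake lemma to $\varphi$ on $0\to M_0^n\to M^n\to M_1^n\to 0$ produces an admissible sequence $0\to\ker(\varphi_0)\to\ker(\varphi)\to\ker(\varphi_1)\to 0$, the connecting map being in $\Hom(\mathcal{A}_1,\mathcal{A}_0)=0$. Since the restriction $f|_0\in\End(M_0^n)$ corresponds under the ring map $\End(M_1)\to\End(M_0)$ to the analogous idempotent $1-\varphi_0'\varphi_0$ for $\varphi_0$, we get $f|_0 M_0^n=\ker(\varphi_0)$. Thus $fM^n$ and $\ker(\varphi)$ fit into identical admissible sequences, and the five lemma yields $\ker(\varphi)=fM^n$.

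For universality, let $N\subseteq M^n$ be any lift of $\ker(\varphi_1)$. Then $p(N+f'M^n)=\ker(\varphi_1)+\im(f_1')=M_1^n$, so $M^n/(N+f'M^n)$ is a quotient of $M^n$ lying in $\mathcal{A}_0$ and vanishes by $\Hom(M^n,\mathcal{A}_0)=0$; thus $M^n=N+f'M^n$ and $f(N)=fM^n=\ker(\varphi)$. To upgrade this surjection to the subobject inclusion $\ker(\varphi)\subseteq N$, I invoke functoriality of $\Ext^1$: the endomorphism $f'\in\End(M^n)$ covers $f_1'\in\End(M_1^n)$ and restricts to $f'|_0\in\End(M_0^n)$, so pullback of $[M^n]$ by $f_1'$ agrees with pushforward by $f'|_0$. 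Because $f_1'|_{\ker(\varphi_1)}=0$, the class $[p^{-1}(\ker(\varphi_1))]$ is killed by pushforward along $f'|_0$ and hence lies in $\Ext^1(\ker(\varphi_1),\ker(f'|_0))=\Ext^1(\ker(\varphi_1),\ker(\varphi_0))$. Consequently every lift $N$ satisfies $N\cap M_0^n\supseteq\ker(\varphi_0)$, and combined with the surjection $f\colon N\twoheadrightarrow\ker(\varphi)$ this forces $\ker(\varphi)\subseteq N$. Part (2) follows by categorical duality, interchanging quotients with subobjects.

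The main obstacle is precisely this final step: converting $f(N)=\ker(\varphi)$ (an image statement) into $\ker(\varphi)\subseteq N$ (a subobject inclusion) requires simultaneously exploiting the algebra isomorphism, semi-simplicity, and both vanishing hypotheses---concretely, via the extension-class computation which pins $\ker(\varphi_0)$ as the minimal possible $M_0$-component of any lift.
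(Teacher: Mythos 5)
Your construction phase is essentially the paper's: you lift through the (matrix extension of the) splitting, use semi-simplicity of $\End(M_1)$ to get $\varphi_1'$ with $\varphi_1\varphi_1'\varphi_1=\varphi_1$, and obtain the idempotent $f=1-\varphi'\varphi$ with $\ker(\varphi)=fM^n$ a direct summand (your snake/five-lemma detour is harmless, though $x\in\ker\varphi\Rightarrow fx=x$ gives this in one line), and the step $M^n=N+f'M^n$, hence $f(N)=\ker(\varphi)$, is a correct use of $\Hom(M,\mathcal{A}_0)=0$. The gap is in the universality step. First, the $\Ext^1$ computation does not deliver the assertion that \emph{every} lift $N$ satisfies $N\cap M_0^n\supseteq\ker(\varphi_0)$: knowing that the class of $p^{-1}(\ker\varphi_1)$ comes from $\Ext^1(\ker\varphi_1,\ker\varphi_0)$ (which itself needs $\Hom(\mathcal{A}_1,\mathcal{A}_0)=0$ to justify, since a class killed by pushforward along $f'|_0$ need not in general lift to the kernel) only produces \emph{some} subobject of $p^{-1}(\ker\varphi_1)$ lifting $\ker\varphi_1$ with $M_0$-part $\ker\varphi_0$; it says nothing about an arbitrary lift $N$, which a priori may meet $M_0^n$ trivially. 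Second, even granting that claim, the final inference from ``$\ker(\varphi_0)\subseteq N$ and $f(N)=\ker(\varphi)$'' to ``$\ker(\varphi)\subseteq N$'' is not a valid implication: take $M^n=\Q a\oplus\Q b$ with $M_0^n=\Q a$, $\varphi=f'$ the projection onto $\Q a$, $f$ the projection onto $\Q b$, and $N=\Q(a+b)$; then $p(N)=\ker\varphi_1$, $\ker\varphi_0=0\subseteq N$ and $f(N)=fM^n=\ker\varphi$, yet $\ker\varphi\not\subseteq N$. So the hypothesis $\Hom(M,\mathcal{A}_0)=0$ must be used a second time at exactly this point, and your argument never does so.

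The paper closes the argument as follows, and this is the fix: intersect the arbitrary lift $N$ with the direct summand $\ker(\varphi)=fM^n$. Both $N$ and $fM^n$ carry admissible subsequences, so by (\ref{eqsumintersect}) one has $p(N\cap fM^n)=p(N)\cap p(fM^n)=\ker(\varphi_1)$; hence the cokernel $A_0$ of $N\cap fM^n\hookrightarrow fM^n$ is a quotient of $f(M_0^n)=\ker(\varphi_0)$ and lies in $\mathcal{A}_0$. Composing the idempotent projection $M^n\twoheadrightarrow fM^n$ with $fM^n\twoheadrightarrow A_0$ gives a morphism from $M^n$ to an object of $\mathcal{A}_0$, which vanishes by hypothesis; since it is an epimorphism, $A_0=0$ and therefore $\ker(\varphi)\subseteq N$. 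This is where the direct-summand property of $\ker(\varphi)$ is genuinely needed, rather than the extension-class bookkeeping. Your observation that $\Hom(M,\mathcal{A}_0)=0$ upgrades the split surjection to an isomorphism $\End(M)\cong\End(M_1)$ is correct and a mild simplification of the paper's setup, and the dualization for part 2 is fine once part 1 is complete.
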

\begin{proof}We prove assertion 1. The other is dual. 
Consider a morphism $\varphi_1: M_1^n \rightarrow M_1^m$ and the lift given by the splitting of $\End(M) \rightarrow \End(M_1)$:
\[ \xymatrix{  
0 \ar[r] & M_0^n \ar[d]^{\varphi_0} \ar[r] & M^n\ar[d]^{\varphi} \ar[r] & M_1^n  \ar[r] \ar[d]^{\varphi_1} & 0    \\
0 \ar[r] & M_0^m \ar[r] & M^m \ar[r] & M_1^m  \ar[r] & 0    
} \]
Because of the semi-simplicity of $\End(M_1)$ we get $\varphi'_1 : M_1^m \rightarrow M_1^n$ such that $\varphi_1 \varphi'_1$ is idempotent and thus 
also a lift $\varphi'$ such that $\varphi \varphi'$ is idempotent. This yields a decomposition of $M^m$ of the following form:
\[ \xymatrix{  
0 \ar[r] & \im(\varphi_0) \oplus M_0' \ar[r] & \im(\varphi) \oplus M' \ar[r] & \im(\varphi_1) \oplus M_1'  \ar[r] & 0   . 
} \]
Given any submodule $N' \subset M^m$ such that $p(N')=\im(\varphi_1)$, intersecting it with $\im(\varphi)$,
we get the following diagram with exact rows and columns
\[ \xymatrix{  
0 \ar[r] & N_0' \cap \im(\varphi) \ar@{^{(}->}[d]\ar[r] & N' \cap \im(\varphi) \ar@{^{(}->}[d]\ar[r] & \im(\varphi_1)  \ar[r] \ar@{=}[d] & 0    \\
0 \ar[r] & \im(\varphi_0) \ar[r] \ar@{->>}[d]  & \im(\varphi) \ar[r]  \ar@{->>}[d] & \im(\varphi_1)  \ar[r]  & 0    \\
 & A_0 \ar@{=}[r] & A_0
} \]
where $A_0$ is defined as the cokernel. 
Note that  $N' \cap \im(\varphi) \to \im(\varphi_1)$ is surjective because of (\ref{eqsumintersect}). This yields a morphism
\[ M^m \twoheadrightarrow \im(\varphi) \twoheadrightarrow A_0 \]
which is zero by assumption. Therefore $\im(\varphi) \subset N'$. 
\end{proof}

\begin{PAR}
Assuming that $\mathcal{A}$ is Noetherian (resp.\@ Artinian), 
for any two objects $M, N \in \mathcal{A}$, we denote 
\begin{eqnarray*}
 \coker(N_0,M) &:=& M/\sum_{\alpha \in \Hom(N_0, M)} \im(\alpha) \\
 \ker(M,N_1) &:=& \bigcap_{\alpha \in \Hom(M, N_1)} \ker(\alpha).
\end{eqnarray*}
\end{PAR}

\begin{LEMMA}\label{LEMMASATSUM}Consider two admissible and left (resp.\@ right) saturated exact sequences:
\[ \xymatrix@R=1em{  0 \ar[r] & M_0 \ar[r] & M \ar[r] & M_1  \ar[r] & 0    \\
  0 \ar[r] & N_0 \ar[r] & N \ar[r] & N_1  \ar[r] & 0    } \]
Their sum 
\[ \xymatrix{  0 \ar[r] & M_0 \oplus N_0 \ar[r] & M \oplus N \ar[r] & M_1 \oplus N_1  \ar[r] & 0    } \]
is left (resp.\@ right) saturated provided that the properties (L1--L3) (resp.\@ (R1--R3)) hold:

\begin{multicols}{2}
\begin{enumerate}
\item[(L1)] $\Hom(M_0, N_0) = 0$.
\item[(L2)]  $\Hom(N, M_0) \rightarrow \Hom(N_0, M_0)$ surjective\footnote{hence bijective, because $\Hom(N_1, M_0)=0$}.
\item[(L3)] $\Hom(\mathcal{A}_1, \coker(N_0,M)) = 0$. 
\item[(R1)] $\Hom(N_1, M_1) = 0$.
\item[(R2)]  $\Hom(M_1, N) \rightarrow \Hom(M_1, N_1)$ surjective\footnote{hence bijective, because $\Hom(M_1, N_0)=0$}.
\item[(R3)] $\Hom(\ker(M, N_1), \mathcal{A}_0) = 0$. 
\end{enumerate}
\end{multicols}
\end{LEMMA}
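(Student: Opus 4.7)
I would treat the left saturated case; the right saturated case is formally dual, with (R1)--(R3) playing the role of (L1)--(L3).

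First I would construct a candidate extension $\widetilde{\varphi}$. Writing $\varphi_0 : (M_0 \oplus N_0)^n \to (M_0 \oplus N_0)^m$ as a block matrix
\[ \varphi_0 = \begin{pmatrix} a_{00} & a_{01} \\ a_{10} & a_{11} \end{pmatrix} \]
with respect to the direct sum, hypothesis (L1) gives $a_{10} = 0$. Left saturatedness of the two given sequences would extend $a_{00}$ to $\widetilde{a}_{00} : M^n \to M^m$ and $a_{11}$ to $\widetilde{a}_{11} : N^n \to N^m$ so that $\im(\widetilde{a}_{ii})$ is the universal extension of $\im(a_{ii})$. Hypothesis (L2) then lifts each entry of $a_{01}$ through the surjection $\Hom(N, M_0) \twoheadrightarrow \Hom(N_0, M_0)$ to $\widetilde{a}_{01} : N^n \to M_0^m$, which I compose with $M_0^m \hookrightarrow M^m$ to fill the $(0,1)$-block. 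The assembled $\widetilde{\varphi} : (M \oplus N)^n \to (M \oplus N)^m$ extends $\varphi_0$; since $\widetilde{a}_{01}$ factors through $M_0^m$, the induced map on the quotients is block-diagonal $\varphi_1 = \mathrm{diag}(a_{00,1}, a_{11,1})$.

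Next I would identify the admissible structure of the image. The snake lemma applied to $\widetilde{\varphi}$ against the admissible sequences $0 \to (M_0 \oplus N_0)^k \to (M \oplus N)^k \to (M_1 \oplus N_1)^k \to 0$ (for $k \in \{n, m\}$) produces an admissible exact sequence
\[ 0 \to \im(\varphi_0) \to \im(\widetilde{\varphi}) \to \im(\varphi_1) \to 0, \]
so in particular $\im(\widetilde{\varphi}) \cap (M_0 \oplus N_0)^m = \im(\varphi_0)$. Thus $\im(\widetilde{\varphi})$ is an extension of $\im(\varphi_0)$ in $(M \oplus N)^m$, as required for the definition of left saturatedness.

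The hard part will be verifying that $\im(\widetilde{\varphi})$ is the \emph{universal} extension, i.e., that every other extension $P \subset (M \oplus N)^m$ of $\im(\varphi_0)$ satisfies $P \subseteq \im(\widetilde{\varphi})$. Since extensions are closed under sums (Lemma~\ref{LEMMAUNIVERSAL}), I would replace $P$ by $P + \im(\widetilde{\varphi})$ and assume $P \supseteq \im(\widetilde{\varphi})$. Then $P/\im(\widetilde{\varphi})$ injects into $Q := (M \oplus N)^m / \im(\widetilde{\varphi})$, whose admissible structure together with the orthogonality $\Hom(\mathcal{A}_1, \mathcal{A}_0) = 0$ would confine $P / \im(\widetilde{\varphi})$ to the weight-1 quotient $(M_1 \oplus N_1)^m / \im(\varphi_1) = (M_1^m / \im(a_{00,1})) \oplus (N_1^m / \im(a_{11,1}))$. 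It then suffices to show that both coordinate projections vanish. The $N$-coordinate is controlled by the universal extension property of $\widetilde{a}_{11}$: it reduces to $\pi_N(P) \cap N_0^m = \im(a_{11})$, where ``$\supseteq$'' is trivial and ``$\subseteq$'' is the point at which (L3) enters --- any excess $(x, y) \in P$ with $y \in N_0^m$ but $x \notin M_0^m$ would produce a non-zero map from an $\mathcal{A}_1$-subquotient into $\coker(N_0, M)$, contradicting (L3). The $M$-coordinate is handled analogously, using the universal extension property of $\widetilde{a}_{00}$ together with (L3) to absorb the coupling contribution of $\widetilde{a}_{01}$. Combining gives $P/\im(\widetilde{\varphi}) = 0$, i.e., $P \subseteq \im(\widetilde{\varphi})$, as required.
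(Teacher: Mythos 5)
Your construction of the lift $\widetilde{\varphi}$ (block matrix, $a_{10}=0$ by (L1), distinguished lifts of the diagonal blocks, (L2)-lift of $a_{01}$ factoring through $M_0^m$) and the identification of the admissible image sequence coincide with the paper's proof. The gap is in the universality verification, which is the actual content of the lemma. Your key step for the $N$-coordinate asserts that an element $(x,y)\in P$ with $y\in N_0^m$ but $x\notin M_0^m$ ``produces a non-zero map from an $\mathcal{A}_1$-subquotient into $\coker(N_0,M)$, contradicting (L3).'' This does not follow. The natural candidate is the composite $P\cap(M^m\oplus N_0^m)\rightarrow M^m \rightarrow \coker(N_0,M)^m$, and for it to factor through the $\mathcal{A}_1$-object $\bigl(P\cap(M^m\oplus N_0^m)\bigr)/\im(\varphi_0)$ you need the $M$-component of the bottom, $p_M(\im(\varphi_0))$, to lie in $\bigl(\sum_{\alpha\in\Hom(N_0,M)}\im(\alpha)\bigr)^m$. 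But $p_M(\im(\varphi_0))$ contains $\im(a_{00})$, which has nothing to do with images of morphisms from $N_0$. Quotienting further (say by $\im(\widetilde{a}_{00})$) to force the factorization only yields a map into a \emph{proper quotient} of $\coker(N_0,M)^m$, and (L3), i.e.\@ $\Hom(\mathcal{A}_1,\coker(N_0,M))=0$, gives no control over morphisms into quotients (quotients can create new $\mathcal{A}_1$-subobjects). The paper circumvents exactly this by splitting $\widetilde{\varphi}$ into its restrictions to $M^n$ and to $N^n$: the $M^n$-restriction has universal image in $M^m$ by hypothesis, and for the $N^n$-restriction the $M$-component of the bottom is $\im(a_{01})$, which \emph{does} lie in $\bigl(\sum_{\alpha\in\Hom(N_0,M)}\im(\alpha)\bigr)^m$, so the map from the $\mathcal{A}_1$-quotient lands in $\coker(N_0,M)^m$ itself and (L3) applies on the nose.

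Two further points. First, the hypotheses are not symmetric in $M$ and $N$, so your ``the $M$-coordinate is handled analogously'' cannot be right: in the paper's argument it is (L3) that kills the $M_1$-direction (first), and only then, knowing all $M$-components lie in $M_0^m$, does the universal-image property of the $N$-sequence kill the $N_1$-direction; there is no dual hypothesis such as $\Hom(\mathcal{A}_1,\coker(M_0,N))=0$ available, and $\pi_M(P)\cap M_0^m$ contains the coupling image $\im(a_{01})$, so it is not an extension of $\im(a_{00})$ and the universality of $\im(\widetilde{a}_{00})$ does not apply to it; ``absorbing the coupling contribution'' is precisely the unproved point. Second, a minor citation slip: Lemma~\ref{LEMMAUNIVERSAL} states that universal \emph{lifts} are closed under sums and universal \emph{extensions} under intersections; the fact you actually need for your reduction (the sum of two extensions of $\im(\varphi_0)$ is again an extension) is fine, but it follows from the intersection/sum formula established inside that lemma's proof rather than from its statement. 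As it stands, the heart of the lemma --- that $\im(\widetilde{\varphi})$ is the \emph{universal} extension --- is not established by your argument.
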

\begin{proof}
We concentrate on the left case. The other case is dual.
By (L1) we have 
\[ \End(M_0 \oplus N_0) = \End(M_0) \oplus \Hom(N_0, M_0) \oplus \End(N_0). \]
Given a morphism $\varphi_0: M_0^n \oplus N_0^n \rightarrow M_0^m \oplus N_0^m$ for some $n$ and $m$ we may thus choose a lift $\varphi$ in
\[  \Hom(M^n, M^m) \oplus \Hom(N^n, M^m) \oplus \Hom(N^n, N^m) \subset \Hom(M^n \oplus N^n, M^m \oplus N^m) \]
in such a way that the first and last component are the distinguished lifts given by the definition of left saturated and the middle component is the one determined by
the bijection of (L2).
This yields a morphism of exact sequences
\[ \xymatrix{  
0 \ar[r] & M_0^n \oplus N_0^n \ar[d]^{\varphi_0} \ar[r] & M^n \oplus N^n \ar[d]^{\varphi} \ar[r] & M_1^n \oplus N_1^n  \ar[r] \ar[d]^{\varphi_1} & 0    \\
0 \ar[r] & M_0^m \oplus N_0^m \ar[r] & M^m \oplus N^m \ar[r] & M_1^m \oplus N_1^m  \ar[r] & 0    
} \]
and the image is the sum of the images of the morphisms restricted to  $M^n$ and $N^n$. It suffices hence to see that the images of these restrictions are universal (by Lemma~\ref{LEMMAUNIVERSAL}).
Since the $\Hom(M^n, N^m)$ component of $\varphi$ is zero by choice, the morphism from $M^n$ goes to $M^m$ and has thus universal image by assumption.
It suffices to see that the image of the morphism 
\[ \xymatrix{  
0 \ar[r] & N_0^n \ar[d]^{(\varphi_{M,0},\, \varphi_{N,0})} \ar[r] & N^n \ar[d]^{(\varphi_{M},\, \varphi_{N})} \ar[r] &  N_1^n  \ar[r] \ar[d]^{(0,\, \varphi_{N,1})} & 0    \\
0 \ar[r] & M_0^m \oplus N_0^m \ar[r] & M^m \oplus N^m \ar[r] & M_1^m \oplus N_1^m  \ar[r] & 0    
} \]
is the universal extension.
Let $K'$ be any subobject of $M^m \oplus N^m$ extending $\im(\varphi)$:
\[ \xymatrix{  
0 \ar[r] & \im(\varphi)  \ar[r] & K' \ar[r] &  K'_1  \ar[r]& 0.
} \]
 Projecting to $M^m$ gives an exact sequence
\[ \xymatrix{  
0 \ar[r] & \im(\varphi_{M,0})  \ar[r] & p_M(K') \ar[r] &  p_M(K'_1)  \ar[r]& 0.
} \]
This gives a non-trivial morphism $p_M(K'_1) \rightarrow p_M(K')/\im(\varphi_{M,0}) \rightarrow M/\im(\varphi_{M,0})$
which cannot become zero in $M/\im(N_0,M)$ because of $\Hom(\mathcal{A}_1, \mathcal{A}_0) = 0$. Contradiction to (L3). 
Hence we have $p_M(K')=\im(\varphi_{M,0})=\im(\varphi_{M})$.  
This shows that $K'$ sits in an exact sequence of the form
 \[ \xymatrix{  
0 \ar[r] & \im(\varphi)  \ar[r] & K' \ar[r] &  (0 \oplus N_1')  \ar[r]& 0  .  
} \]
Projecting to $N^m$ and using that $\varphi_N$ has universal image we get $N'_1 \subset \im(\varphi_{N,1})$ and hence $K' \subset \im(\varphi)$.
\end{proof}

\begin{PROP}\label{PROPSATURATEDPRINCIPAL}
Let 
\[ \xymatrix{  0 \ar[r] & M_0 \ar[r] & M \ar[r] & M_1  \ar[r] & 0    } \]
be an admissible exact sequence with $M_0 \in \mathcal{A}_0$ and  $M_1 \in \mathcal{A}_1$  both principal. Then
\begin{enumerate}
\item if the sequence is right saturated and $\Hom(M, \mathcal{A}_0)=0$ then $M \oplus M_0$ is principal.
\item if the sequence is left saturated and $\Hom(\mathcal{A}_1, M)=0$ then $M \oplus M_1$ is principal. 
\end{enumerate}
\end{PROP}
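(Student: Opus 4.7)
The plan is to verify axiom (P2 left) for $M' := M \oplus M_0$ in part (1); part (2) follows by the dual argument using left saturation and $\Hom(\mathcal{A}_1, M) = 0$. Throughout, let $\pi : (M')^m \cong M^m \oplus M_0^m \to M_1^m$ denote the projection induced by $p : M \to M_1$, whose kernel is $M_0^{2m}$, and let $\mathcal{C}$ be the closure class of subobjects of $(M')^m$ (for varying $m$) from (P2 left), which is closed under sums and intersections by the preceding lemma.

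In the first step, I would show that every subobject of $M_0^{2m} \subset (M')^m$ lies in $\mathcal{C}$. Using the inclusion $i : M_0 \hookrightarrow M$, the endomorphism of $(M')^m$ sending $(x,y) \mapsto (i(y), 0)$ has image $M_0^m \oplus 0$, and the projection to the second summand gives $0 \oplus M_0^m$; summing yields $M_0^{2m} \in \mathcal{C}$. Every morphism $M_0^a \to M_0^b$ extends to a morphism $(M')^a \to (M')^b$ by acting as zero on the $M$-component, so principality of $M_0$ places every subobject of $M_0^{2m}$ in $\mathcal{C}$.

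In the second step, I would show that $\pi^{-1}(X) \in \mathcal{C}$ for every $X$ in the corresponding closure class $\mathcal{C}_{M_1}$ for $M_1$. By right saturation, every morphism $\psi_1 : M_1^a \to M_1^b$ admits a lift $\widetilde\psi : M^a \to M^b$ (with $\ker \widetilde\psi$ the universal lift of $\ker \psi_1$), which extends to $\hat\psi : (M')^a \to (M')^b$ by letting the $M_0$-component act via the restriction of $\widetilde\psi$. An induction on the construction of $\mathcal{C}_{M_1}$ then succeeds via the identities $\pi^{-1}(\psi_1(X)) = \hat\psi(\pi^{-1}(X)) + M_0^{2b}$ for the image step and $\pi^{-1}(\psi_1^{-1}(X)) = \hat\psi^{-1}(\pi^{-1}(X))$ for the preimage step. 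Applied to $N'_1 := \pi(N')$, this yields $\pi^{-1}(N'_1) \in \mathcal{C}$; together with $N'_0 := N' \cap M_0^{2n} \in \mathcal{C}$ from Step~1, the two natural ``bounds'' on $N'$ are already in $\mathcal{C}$.

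The main obstacle is the third step: realizing $N'$ itself in $\mathcal{C}$, not merely the ambient $\pi^{-1}(N'_1)$. The crucial structural fact is that $N'$ is uniquely determined by the pair $(N'_0, N'_1)$: two sub-extensions of $\pi^{-1}(N'_1)$ both surjecting onto $N'_1$ and intersecting $M_0^{2n}$ in $N'_0$ must coincide, since their difference would produce a nonzero morphism from $N'_1 \in \mathcal{A}_1$ to a subquotient of $M_0^{2n} \in \mathcal{A}_0$, contradicting $\Hom(\mathcal{A}_1, \mathcal{A}_0) = 0$. To realize $N'$ concretely in $\mathcal{C}$, I would apply $M_1$-principality to write $N'_1 = \alpha_1(\ker \beta_1)$, lift $\alpha_1, \beta_1$ to $\widetilde\alpha, \widetilde\beta$ by right saturation, and form the concrete sub-lift $Q \oplus 0 \in \mathcal{C}$ with $Q := \widetilde\alpha(\ker \widetilde\beta) \subset M^n$. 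A carefully engineered pair $\alpha : (M')^{k_1 + k_0} \to (M')^n$ and $\beta : (M')^{k_1 + k_0} \to (M')^{l}$ combining the data for $Q \oplus 0$ and $N'_0$, and exploiting the upper-triangular form of morphisms $(M')^a \to (M')^b$ forced by $\Hom(M, \mathcal{A}_0) = 0$, should then yield $\alpha(\ker \beta) = N'$. The delicate verification that the resulting extension class matches $N'$ exactly --- not a strictly larger or differently twisted lift --- will rest on the universality of $\ker \widetilde\beta$ (which minimizes the ``$M_0$-slack'' of $Q$) combined with the section uniqueness above, so that adding $N'_0$ produces exactly $N'$.
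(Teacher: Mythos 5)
Your Steps 1 and 2 are essentially sound (the identities $\pi^{-1}(\psi_1(X)) = \hat\psi(\pi^{-1}(X)) + \ker\pi$ and $\hat\psi^{-1}(\pi^{-1}(X)) = \pi^{-1}(\psi_1^{-1}(X))$ are correct, and the transfer of $M_0$-principality into $\mathcal{C}$ works once you also use closure under intersections), and your uniqueness observation --- a sub-extension of $\pi^{-1}(N_1')$ is determined by the pair $(N_0',N_1')$ because $\Hom(\mathcal{A}_1,\mathcal{A}_0)=0$ --- is true. But the proof stops exactly where the real difficulty begins. Knowing $N_0'\in\mathcal{C}$, $\pi^{-1}(N_1')\in\mathcal{C}$, and that $N'$ is \emph{uniquely determined} by $(N_0',N_1')$ does not make $N'$ \emph{constructible}: membership in $\mathcal{C}$ is a constructibility statement, and the extension datum of $N'$ inside $\pi^{-1}(N_1')$ is precisely what has not been captured. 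Your concrete suggestion, that a sub-lift $Q\oplus 0$ with $Q=\widetilde\alpha(\ker\widetilde\beta)$ plus $N_0'$ ``produces exactly $N'$'', would follow from the uniqueness lemma only if $Q\oplus 0\subset N'$ (then the sum surjects onto $N_1'$ and meets $\ker\pi$ in exactly $N_0'$). Nothing forces this containment: a lift of $N_1'$ built from $\widetilde\alpha,\widetilde\beta$ differs from a lift contained in $N'$ by a twist with values in the $\mathcal{A}_0$-part, and right saturatedness controls kernels of lifted morphisms, not images of universal lifts. The ``delicate verification'' you defer is therefore not a verification but the missing argument.

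Compare with the paper's proof, which supplies two devices your outline lacks. First, it upgrades right saturatedness from the given sequence to the sequence $0\to M_0^2\to M\oplus M_0\to M_1\to 0$ via Lemma~\ref{LEMMASATSUM}; this is where $\Hom(M,\mathcal{A}_0)=0$ is really used (condition (R3)), not just for the triangular shape of morphisms of $M\oplus M_0$. Your lifts $\hat\psi$, extended componentwise from $M$, have kernels universal only among subobjects of $M^a$, whereas the argument needs universality among subobjects of $(M\oplus M_0)^a$. Second, instead of reconstructing $N'$ from the pair $(N_0',N_1')$, the paper pulls back $N'$ itself: choosing $\alpha$ lifting $\alpha_1$ with $(M_0^2)^m\subset\im(\alpha)$, it sets $N'':=\alpha^{-1}(N')\cap p^{-1}(\ker\beta_1)$, proves $\alpha(N'')=N'$, and uses $\ker(\beta)\subset N''$ (universality over $(M\oplus M_0)^k$) to write $N''=\beta^{-1}(i(N_0''))$ with $N_0''$ a subobject of a power of $M_0$, where principality of $M_0$ applies. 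Some mechanism of this kind --- capturing the extension class of $N'$ by a preimage construction rather than by uniqueness --- is what your Step 3 still needs; as written, the proposal has a genuine gap at its central step.
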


\begin{proof}We prove assertion 1. Assertion 2.\@ is dual. 
We apply Lemma~\ref{LEMMASATSUM} to the given exact sequence and the exact sequence
\[ \xymatrix{  0 \ar[r] & N_0:=M_0 \ar[r] & N:=M_0 \ar[r] & N_1:=0  \ar[r] & 0.    }  \]
The assertions (R1--R2) are trivial and (R3) boils down to the assumption $\Hom(M, \mathcal{A}_0)=0$. 
This shows that
\[ \xymatrix{  0 \ar[r] & M_0^2 \ar[r] & M \oplus M_0 \ar[r] & M_1  \ar[r] & 0    } \]
is again right saturated.
Consider a subobject for some $m\ge 0$:
\[ \xymatrix{  
0 \ar[r] & N_0' \ar[r] \ar@{^{(}->}[d] & N' \ar[r] \ar@{^{(}->}[d] & N_1'  \ar[r] \ar@{^{(}->}[d] & 0  \\
0 \ar[r] & (M_0^2)^m \ar[r] & (M \oplus M_0)^m \ar[r] & M_1^m  \ar[r] & 0  
  } \]
Since $M_1$ is principal there is a diagram
\begin{equation*} \xymatrix{ 
\ker(\beta_1) \ar@{^{(}->}[d]& \\ 
M^k_1 \ar@{->}[d]^{\beta_1} \ar[r]^{\alpha_1} & M_1^m  \\
M^l_1 } \end{equation*}
such that $N_1' = \alpha_1(\ker(\beta_1))$. Since $\End(M \oplus M_0) \rightarrow \End(M_1)$ is surjective, we can arrange a lift $\alpha$ 
(possibly by increasing $k$) such that
\[ \alpha: (M \oplus M_0)^k \rightarrow (M \oplus M_0)^m  \]
satisfies $(M_0^2)^m \subset \im(\alpha)$ and hence $N' \subset \im(\alpha)$. (Observe that we are free to choose $\alpha$
on $M_0^k$.)
Furthermore we may choose a lift $\beta$ of $\beta_0$ in such a way that $\ker(\beta)$ is the universal lift of $\ker(\beta_1)$. We get a diagram with exact rows and columns: 
\[ \xymatrix{  
0 \ar[r] & \ker(\beta_0) \ar[r] \ar@{^{(}->}[d] & \ker(\beta) \ar[r] \ar@{^{(}->}[d] & \ker(\beta_1)  \ar[r] \ar@{^{(}->}[d] & 0  \\
0 \ar[r] & (M_0^2)^k \ar[d]^{\beta_0} \ar[r] & (M \oplus M_0)^k \ar[r]^-p \ar[d]^{\beta}  & M_1^k  \ar[r] \ar[d]^{\beta_1} & 0   \\
0 \ar[r] & (M_0^2)^l \ar[r]^-i & (M \oplus M_0)^l \ar[r] & M_1^l  \ar[r] & 0   \\
  } \]
 
Consider $N'' :=  \alpha^{-1}(N') \cap p^{-1}(\ker_1)$. The surjectivity of $\alpha_0$ implies that we have epimorphisms
\[   \alpha^{-1}(N') \cap p^{-1}(\ker(\beta_1)) \twoheadrightarrow  N' \times_{N_1'} \ker(\beta_1) \twoheadrightarrow N'. \]
Therefore $\alpha(N'') = N'$. Since  $\ker(\beta) \subset N''$ because of universality, there is a subdiagram with exact rows and columns:
\[ \xymatrix{  
0 \ar[r] & \ker(\beta_0)  \ar[r] \ar@{^{(}->}[d] &  \ker(\beta)  \ar[r] \ar@{^{(}->}[d] & \ker(\beta_1)  \ar[r] \ar@{=}[d] & 0  \\
0 \ar[r] & N'' \cap (M_0^2)^k  \ar@{->>}[d] \ar[r] & N'' \ar[r] \ar@{->>}[d] & \ker(\beta_1) \ar[r] & 0   \\
& N_0'' \ar[r]^{\sim} & i(N_0'')  \\
  } \]
Now $N_0'' \subset (M_0^{2})^l$ lies in the class $\mathcal{C}$ (cf.\@ (P2 left) of Definition~\ref{DEFWP}) for the object $M_0$ because $M_0$ is principal. Hence also $0 \oplus N_0''  \subset (M \oplus M_0)^{2l}$ lies in the class $\mathcal{C}$ for the object $M_0 \oplus M$. The morphism 
\[ (0, i): (M \oplus M_0)^{2l} \rightarrow (M \oplus M_0)^{l} \]
 maps $0 \oplus N_0''$ to $i(N_0'')$. 
Therefore also $N'' = \beta^{-1}(i(N_0'')) \subset (M \oplus M_0)^k$ lies in $\mathcal{C}$ and hence
also $N' = \alpha(N'') \subset (M \oplus M_0)^m$ lies in $\mathcal{C}$.
\end{proof}

With slight modifications of the proof one has also the following variant:

\begin{PROP}\label{PROPSATURATEDPRINCIPALVAR}
Let 
\[ \xymatrix{  0 \ar[r] & M_0 \ar[r] & M \ar[r] & M_1  \ar[r] & 0    } \]
be an admissible exact sequence with $M_0 \in \mathcal{A}_0$ and  $M_1 \in \mathcal{A}_1$  both principal. Then
\begin{enumerate}
\item if the sequence is right saturated and $\Hom(M, \mathcal{A}_0)=0$ and $A_0$ is a generator of $\mathcal{A}_0$ then $M \oplus A_0$ is principal.
\item if the sequence is left saturated and $\Hom(\mathcal{A}_1, M)=0$ and $A_1$ is a cogenerator of $\mathcal{A}_1$ then, $M \oplus A_1$ is principal. 
\end{enumerate}
\end{PROP}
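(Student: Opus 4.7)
My plan is to adapt the proof of Proposition~\ref{PROPSATURATEDPRINCIPAL} almost verbatim, substituting the generator property of $A_0$ for the principality of $M_0$ at the one step where the latter enters. I will verify assertion~1; assertion~2 is strictly dual, with $A_1$ playing the role of a cogenerator. I aim to establish condition (P2 left) of Definition~\ref{DEFWP} for $M \oplus A_0$, which is enough by the equivalences stated there.

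I begin by applying Lemma~\ref{LEMMASATSUM} to the given admissible sequence summed with the tautologically right saturated sequence $0 \to A_0 \to A_0 \to 0 \to 0$: hypotheses (R1) and (R2) are vacuous since the second sequence has $N_1 = 0$, while (R3) reduces to $\Hom(M, \mathcal{A}_0) = 0$, which is assumed. The sum sequence
\[
0 \to M_0 \oplus A_0 \to M \oplus A_0 \to M_1 \to 0
\]
is therefore admissible and right saturated. Given a subobject $N' \subset (M \oplus A_0)^m$, I then run the machinery of the original proof: principality of $M_1$ supplies $\alpha_1 \colon M_1^k \to M_1^m$ and $\beta_1 \colon M_1^k \to M_1^l$ with $N_1' = \alpha_1(\ker \beta_1)$; right saturatedness lifts these to $\alpha, \beta$ between powers of $M \oplus A_0$ with $\ker \beta$ the universal lift of $\ker \beta_1$; and I append extra columns to $\alpha$ so that $(M_0 \oplus A_0)^m \subset \im \alpha$. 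To provide these extra columns I use the inclusion $A_0 \hookrightarrow M \oplus A_0$ (covering the $A_0$-summand) and the composition $A_0^s \twoheadrightarrow M_0 \hookrightarrow M \hookrightarrow M \oplus A_0$ coming from the generator property (covering the $M_0$-summand); both factor through $M_0 \oplus A_0$ and hence do not perturb $\alpha_1$. Setting $N'' := \alpha^{-1}(N') \cap p^{-1}(\ker \beta_1)$ then yields $\alpha(N'') = N'$ and, by universality, $\ker \beta \subset N''$, producing the subquotient $N_0'' \subset (M_0 \oplus A_0)^l$ exactly as in the original diagram chase.

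The one genuinely new point, and the step I expect to be the main obstacle, is showing that the copy of $N_0''$ sitting inside $(M \oplus A_0)^l$ lies in the class $\mathcal{C}(M \oplus A_0)$ of (P2 left). In place of the original appeal to $\mathcal{C}(M_0)$ and transfer, I will construct a witness directly: since $N_0'' \in \mathcal{A}_0$, the generator property furnishes a surjection $A_0^t \twoheadrightarrow N_0''$; composing with the inclusion $N_0'' \hookrightarrow (M_0 \oplus A_0)^l \hookrightarrow (M \oplus A_0)^l$ and precomposing with the projection $(M \oplus A_0)^t \twoheadrightarrow A_0^t$ produces a morphism between powers of $M \oplus A_0$ whose image in $(M \oplus A_0)^l$ is exactly $N_0''$. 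Axiom~(b) of (P2 left), applied to the full source $(M \oplus A_0)^t$ (which is in $\mathcal{C}(M \oplus A_0)$ by axiom~(a)), therefore places $N_0''$ in $\mathcal{C}(M \oplus A_0)$. Axioms~(c) and~(b) (preimage under $\beta$, then image under $\alpha$) propagate this through to $N' \in \mathcal{C}(M \oplus A_0)$, completing the verification that $M \oplus A_0$ is principal.
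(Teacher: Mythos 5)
Your proposal is correct and takes essentially the same route the paper intends: the paper proves this variant precisely as a ``slight modification'' of the proof of Proposition~\ref{PROPSATURATEDPRINCIPAL}, and your argument reproduces that proof step by step (sum with $0 \to A_0 \to A_0 \to 0 \to 0$ via Lemma~\ref{LEMMASATSUM}, principality of $M_1$, lifts $\alpha,\beta$ from right saturatedness, the chase producing $N''=\beta^{-1}(i(N_0''))$ and $N'=\alpha(N'')$). Your two uses of the generator property of $A_0$ --- to supply the extra columns of $\alpha$ covering $(M_0\oplus A_0)^m$, and to exhibit $i(N_0'')$ directly as the image of a morphism $(M\oplus A_0)^t \to (M\oplus A_0)^l$ so that axioms (a)--(c) of (P2 left) place it in $\mathcal{C}$ --- are exactly the substitutions needed where the original argument used the extra $M_0$-summand and the principality of $M_0$.
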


\begin{PAR}\label{PARWEIGHT}
Proposition~\ref{PROPSATURATEDPRINCIPAL} can be applied in the following situation. An Abelian category $(\mathcal{A}, H_{B}, H_{dR})$ with fiber functors as above  is said to be {\bf equipped with a weight filtration} if  $H_{B}$ factors
\[ \mathcal{A} \rightarrow \cat{f.d.-filt-$\Q$-Vect } \rightarrow \cat{f.d.-$\Q$-Vect } \] 
through $\cat{f.d.-filt-$\Q$-Vect } $, the {\em exact} category of filtered vector spaces and strict morphisms,
and similarly for $H_{dR}$. 

An object $\mathcal{M}$ is said to be pure of weight $n$ if $H_B(M)$ is pure of weight $n$. 
An exact sequence
\[ \xymatrix{  0 \ar[r] & M_0 \ar[r] & M \ar[r] & M_1  \ar[r] & 0    } \]
in which $M_0$ and $M_1$ are pure of different weights (or more generally such that different weights occur in $\mathrm{gr} H_B$) is automatically admissible for the appropriate subcategories.
This allows for instance to prove by induction that $M \oplus W^{-1}M \oplus W^{-2}M \oplus \cdots$ is principal, starting from pure objects --- which are (conjecturally) semi-simple in concrete categories of mixed motives --- if all assumptions are satisfied. In Section~\ref{SECT1MOT} this is exploited for 1-motives recovering results of \cite{HW18}.
\end{PAR}

\newpage

\bibliographystyle{abbrvnat}
\bibliography{paper6}

\end{document}